\documentclass[10pt]{article}
\usepackage{amsmath,amssymb,amsthm}
\usepackage{amsfonts}

\usepackage{mathtools}
\usepackage{color}
\usepackage[nohead,margin=1.0in]{geometry}
\usepackage{booktabs}
\usepackage{graphicx,caption,subcaption}
\usepackage[ruled,vlined]{algorithm2e}
%MJ: I am to use algorithm2e for algorithm typesetting.
% \usepackage{algorithm,algorithmic}
\usepackage{verbatim}
\usepackage{url}
\usepackage{grffile}
\usepackage{float}

\theoremstyle{plain}
\newtheorem{theorem}{Theorem}[section]
\newtheorem{remark}{Remark}[section]
\newtheorem{definition}{Definition}[section]
\newtheorem{lemma}{Lemma}[section]
\newtheorem{assumption}{Assumption}[section]
\newtheorem{proposition}{Proposition}[section]
\newtheorem{corollary}{Corollary}[section]

\numberwithin{equation}{section}

\newcommand{\N}{\mathbb{N}}
\newcommand{\R}{\mathbb{R}}
\newcommand{\C}{\mathbb{C}}
\newcommand{\K}{\mathbb{K}}

\renewcommand{\d}{\mathrm{d}}

\newcommand{\commentout}[1]{}

\usepackage{csquotes} % May remove later
\usepackage{enumitem} % Remove later
\newlist{enumerateAssumption}{enumerate}{2} % Remove later
\setlist[enumerateAssumption]{label=(\roman*),parsep = 0pt, topsep = 0pt} % remove later
% domain of function/operator
\newcommand{\D}{\mathcal{D}}

% notation for the different penalty terms
% general penalties
\newcommand{\penc}{{\mathcal{R}_c}}
\newcommand{\pens}{{\mathcal{R}_s}}

% sign function
\DeclareMathOperator{\sign}{sgn}

% specific penalties on c

% specific penalty terms for easy parameter choice

%\newcommand{\phisp}{{\Phi^\textnormal{sp}}}

% shortcut for functional
\newcommand{\J}{J_{\alpha,\beta,\mu}^{\delta,\epsilon}}

\title{Joint super-resolution image reconstruction and parameter identification in imaging operator: Analysis of bilinear operator equations, numerical solution, and application to magnetic particle imaging}
\author{Tobias Kluth\thanks{Center for Industrial Mathematics, University of Bremen, 28357 Bremen, Germany (\texttt{tkluth@math.uni-bremen.de, cbathke@math.uni-bremen.de, pmaass@math.uni-bremen.de})} \and
Christine Bathke$^\ast$ \and
Ming Jiang\thanks{Department of Information and Computing Sciences, School of Mathematical Sciences, Peking University, Beijing 100871, China (\texttt{ming-jiang@pku.edu.cn})} \and
\and 
Peter Maass$^\ast$
}

\begin{document}
\maketitle

\begin{abstract}
One important property of imaging modalities and related applications is the resolution of image reconstructions which relies on various factors such as instrumentation or data processing.
Restrictions in resolution can have manifold origins, e.g., limited resolution of available data, noise level in the data, and/or inexact model operators.
In this work we investigate a novel data processing approach suited for inexact model operators. 
Here, two different information sources, high-dimensional model information and high-quality measurement on a lower resolution, are comprised in a hybrid approach.
The joint reconstruction of a high resolution image and parameters of the imaging operator are obtained by minimizing a Tikhonov-type functional.
The hybrid approach is analyzed for bilinear operator equations with respect to stability, convergence, and convergence rates. 
We further derive an algorithmic solution exploiting an algebraic reconstruction technique. The study is complemented by numerical results ranging from an academic test case to image reconstruction in magnetic particle imaging.
\\
{\bf Keywords}: mathematical imaging, hybrid models, super-resolution, joint parameter identification, magnetic particle imaging

\end{abstract}

\section{Introduction}

    Enhancing the resolution in   image reconstructions is a never ending challenge in medical imaging.
    Diagnostic quality or the potential for novel applications such as molecular or multi-modal imaging crucially depend on advances in image resolution  \cite{Greenspan2009,Isaac2015SuperRT}.
       Obtaining a better resolution can be achieved by either improving the measurement technology or by advances in data processing. In this paper we will address the second approach in a setting which is motivated by  the particular case of magnetic particle imaging (MPI) to be introduced later in this section.

      For motivation we start  with a general task  of an inverse problem of reconstructing a two-dimensional image $c : \Omega \rightarrow \R$ with $\Omega =  [0,1]^2 $  from measured data $u: {\cal{D}}\rightarrow  \R^{d}$ with ${\cal{D}} \subset \R^{\tilde d}$, i.e.,  $u$ is a $d$-dimensional data set defined on a $\tilde d$-dimensional domain. Image and data are related by a measurement process $A:  X \rightarrow Z$, i.e., $Ac \sim u$, for some suitably defined function spaces $ X, Z$, where $X$ is called image space and $Z$ data space.
      The general task in image reconstruction is to determine an approximation of $c$ from given $u$ and $A$.
      In all applications only a sampled and noisy version $u_\delta$ of $u$ is measured and a discretized reconstruction of $c$ is sought after \cite{Scherzer2008,Siltanen2012,Louis1989,Schuster2012,rieder2003,Burger_2018}.
    
    The achievable resolution in medical image reconstruction can be limited for several reasons:
    \begin{itemize}
    \item the available data $u_\delta$ has  limited resolution,
    \item the noise level of the data prohibits high quality reconstructions
    \item the model operator is inexact and allows only for a limited spatial accuracy in the image space.
    \end{itemize}
    For an overview of the achievable resolution of different medical imaging technologies see \cite{KnoppOnline2016}.
    In the present paper we address a particular setting, which is motivated by modeling the inversion process in magnetic particle imaging (MPI), see \cite{Kluth2018b,Knopp2010e}. The MPI problem is the reconstruction of an unknown distribution of nanoparticle concentration $c: \Omega \rightarrow \R$ inside the body from voltage measurements $u: [0,T] \rightarrow \R^d$ induced by an electromagnetic field of the magnetized nanoparticles. 
    Measurements are commonly obtained from three measurement coil units, which results in $d=3$. 
    Alternatively, the measurement signals are often transformed in Fourier-space, which then results in $\cal{D}=\N$ and $d=6$ after separating real and imaginary parts of the data. We will use this application as motivation but consider the super resolution problem in a general variational setting.
    
    The basic analytical model of MPI and other imaging reconstruction problems, \cite{Engl2000,Louis1989,rieder2003}, is given by a linear integral equation
    \begin{equation}\label{eq:general_problem_intro}
    \int_\Omega \ s(x,t) c(x) \d x =u_\delta (t) \ .
\end{equation}
   Here, $s(x,t)$ is the system or point spread function.
   The system function $s$ can either be determined experimentally by placing a delta probe at position $x_0$, i.e., $c(\cdot)=\delta(\cdot - x_0)$, and measuring the resulting data $u$, which yields $ s(x_0,t)=u(t)$,  or it can be derived from first-principle physical-mathematical modeling.
   However, taking MPI as motivation, we encounter the situation that the experimental approach is very delicate and time consuming. Hence, the experimental approach will yield the precise system function $s(x_i,t)$ but only for a small set of sample points and with a very coarse resolution. This model is called $s_\mathrm{calib}$. 
   On the other hand, a physical-mathematical model of $s$ can be evaluated with arbitrary resolution; however, several models are not suitable for the purpose of image reconstruction as they neglect effects such as particle magnetization dynamics, size effects of the nanoparticles or particle-particle interaction \cite{Kluth2018a,haegele2012magnetic,Knopp2017}. More recently, progress has been made in the development of a suitable model \cite{KluthSzwargulskiKnopp2019}.
   This model will be called $s_\mathrm{mod}$.
   
   Similar situations of having a low-quality, high-resolution and a high-quality, low-resolution model occur in several other imaging applications. E.g., this is typical in bi-modal imaging \cite{Ming2014,Arridge2011} or in molecular imaging (MALDI imaging) \cite{Heeren2019}.
   More specifically, we consider applications where  high-quality
calibration measurements can be performed on a rather coarse grid describing the image-measurement relationship accurately.
Unfortunately, the improved accuracy is then accompanied by a restriction in resolution. 
   One important question is, how to connect this experimental "expert" knowledge with a model based approach.  
One possible answer is to simultaneously determine parameters of the forward operator when reconstructing the desired image.

   In this situation, it is natural to regard $s$ as an additional variable and to consider the bilinear inverse problem with operator 
   \begin{equation}\label{eq:def_operator_intro}
   \begin{tabular}{ccl}
    $B \ :$    & $X \times Y$ & $\rightarrow Z$ \\
        & $(c,s)$ & $\mapsto  \int_\Omega$ \ $s(x,\cdot) c(x) \mathrm{d}x \ $.
   \end{tabular}
\end{equation}
   $Y$ denotes a suitable function space for modeling system functions $s$. In addition we need an operator $P$ linking a high resolution system function to a low resolution approximation, such as a projection operator. Both, $Y$ and $P$
    will be specified in the next section.
   
   Introducing suitable penalty functionals, the inverse problem of reconstructing simultaneously an update for $s_\mathrm{mod}$ and a reconstruction of $c$ can be formulated as a Tikhonov regularization scheme
   defined by the functional
   \begin{equation}\label{Tikhonov_intro}
   J^\delta_{\alpha,\beta,\gamma,\mu}(c,s)=  { {\frac{1}{2} \| B(c,s)-u_\delta \|_Z^2}} +  { {\frac{\gamma}{2} \|s-s_\mathrm{mod}\|_Y^2 + \frac{\mu}{2} \| P(s)-s_\mathrm{calib}\|_Y^2 +\alpha \mathcal{R}_c(c) + \beta \mathcal{R}_s(s)}} \ .
\end{equation}
In contrast to the sole image reconstruction problem, the additional degree of freedom allows to compensate potential errors in the modeled system function $s_\mathrm{mod}$ but also causes a largely underdetermined problem which requires a priori knowledge on $s$ being the parameters of the forward operator. 

For system functions satisfying $s(x,t)=s(x-t)$ this setting is identical to the well known problem of blind deconvolution,  see \cite{Lv_2018,Bleyer:2013cw, Justen2006} and the references therein for an overview of related regularization approaches.
Indeed,  \cite{Bleyer:2013cw} has partially influenced the approach of the present paper. 

Also, there exists a large and somewhat complete body of literature related to general inverse problems in Hilbert and Banach space settings 
\cite{Hofmann:2007us,Grasmair:2008cy,Schuster2012,jin2012review}. \cite{Hofmann:2007us} is of particular importance for the present paper and to some extend one can regard the present paper as specifying their results   to the functional defined in \eqref{Tikhonov_intro}. 
Moreover, machine learning approaches for inverse problems have been investigated intensively in the past few years, for a review of the present state of the art see \cite{Arridge2019}.
 
The precise mathematical setting of the super-resolution problem discussed in the present paper will be defined in the next section.
   We then discuss the analytic properties of the Tikhonov functional as well as the regularization properties of its minimizers in Section 3.
   A Kaczmarz-type algorithm minimizing the considered Tikhonov functional is presented in Section 4.
   Finally, we apply this approach to an academic test problem as well as to real data obtained from an MPI experiment in Section 5 and conclude with discussions in Section 6.

%\TKcomment{ Motivation + related work; this may give a more general perspective; MPI should also be mentioned as a very good example (among others. Other applications need to be identified.)}

%Various imaging applications require an accurate model to enable solving the inverse problem.
%Imaging quality then strongly relies on the accuracy of the resulting forward operator mapping the image to the measurement. 
%Often calibration measurements can be performed on a very coarse grid describing the image-measurement relationship in a proper way.
%Unfortunately, the improved accuracy is then accompanied by a restriction in resolution. 
%One important question is, how to include model knowledge allowing a refined reconstruction which may deviate to a certain degree from the truth.  
%These different information sources need to be combined in a proper way to exploit the available information for more robust reconstruction on a refined resolution. 
%One possible way out is to simultaneously determine parameters of the forward operator when reconstructing the desired image. 
%These parameters are named \textit{imaging operator parameters} in the remainder of the work.
%The additional degree of freedom thus allows to compensate potential errors in the imaging forward operator but also causes a largely underdetermined problem which requires a priori knowledge with respect to the imaging operator parameters. 

\section{Problem formulation and discussion}

In the following we define the mathematical setting of the problem, formulate a variational approach for its solution, and distinguish possible perspectives on the resulting problem. 
 %\subsection{Formal problem statement and context of different terms}
%\TKcomment{Problem formulation: Material from Sec. 1.6 and 1.7 may be used. Careful discussion of different assumptions on the terms in the functional and case distinction.}\\
Let $X,Y,Z$ be Hilbert spaces and let $B: X\times Y \rightarrow Z$, $(c,s) \mapsto B(c,s)$ denote a bilinear operator as defined in \eqref{eq:def_operator_intro} where $X\times Y$ is equipped with the canonical inner product $\langle (c_1,s_1), (c_2,s_2) \rangle_{X\times Y} = \langle c_1, c_2 \rangle_X + \langle s_1, s_2 \rangle_Y$.

  The problem of interest is  
 to obtain an approximate solution $(c,s)\in X\times Y$ from noisy measurements $u_\delta\in Z$. As usual we assume that a physically exact solution $(c^\ast,s^\ast)$ exists and that noisy data $u_\delta$ satisfying $\|u_\delta - u^\ast\|_Z \leq \delta$, where  
\begin{equation}\label{eq:general_problem}
  u^\ast=  B(c^\ast,s^\ast) 
\end{equation}
is the true data.
For given $u_\delta$ and $B$, the inverse problem is to determine a suitable approximation for $(c^\ast,s^\ast)$.

In the setting of the present paper we address the problem of super-resolution by including two pieces of information for the system function $s$. We assume that we have two different approximations of the true system function $s^*$:
$s_\mathrm{mod}$ is obtained from a theoretical but incomplete model in the original infinite-dimensional (or high-dimensional) space (type A), $s_\mathrm{calib}$ is obtained in a high-quality calibration procedure but on a finite-dimensional (or lower-dimensional) subspace (type B).   We further distinguish two sub-cases for $s_\mathrm{mod}$.
\begin{itemize}
    \item[(A)] Let $s_\mathrm{mod}, s_{\mathrm{mod},\epsilon}\in Y$. We   either assume that a fixed  model $s_\mathrm{mod}$ is given or that a model hierarchy  $s_{\mathrm{mod},\epsilon}$ with varying accuracy $\epsilon$ is available:
    \begin{itemize}
        \item[(i)] $s_\mathrm{mod}$ is used as a high-resolution reference model of limited accuracy for  the system function.  This will be used for the  formulation of an additional penalty term in a Tikhonov regularization scheme.  
        \item[(ii)] $s_{\mathrm{mod},\epsilon}$ is assumed to be a high-dimensional  model, which can  be obtained with different levels of accuracy   fulfilling $\| s_{\mathrm{mod},\epsilon} - s^\ast \|_Y \leq \epsilon$. This type of information is more suitable for formulating an alternative discrepancy term in a Tikhonov functional.  
    \end{itemize}
    \item[(B)] The low-resolution, high-qualtity  approximation  
    $s_\mathrm{calib}$ of the system functions $s^*$ is modeled by an element 
    in 
    $Y_n \subset Y$, where $Y_n$ is a finite-dimensional space (if $Y$ is already $m$-dimensional with $m<\infty$, let $n<m$).  Let $P:Y \rightarrow Y_n$ be a linear and bounded operator mapping onto $Y_n$ and we assume that $s_\mathrm{calib} \sim P(s^*)\in Y_n$ is an almost perfect but low-dimensional observation of the true system function. This will serve as a reference for $P(s)$ in a penalty term.
    From an application point of view this can be interpreted for example as an observation on a coarse spatial grid. 
    
\end{itemize}

\begin{remark}
Note that we do not explicitly require $P$ to be a projection operator, which would imply $P^2=P$  and $\lVert P \rVert = 1$ for an orthogonal projection. The above assumption, that $P$ is linear and bounded, will be sufficient for the following theoretical analysis.
\end{remark}

Taking into account both sources of information, we formulate a Tikhonov-functional  for $(c,s)$  with a multi-criteria penalty term. 
For case A(i) we consider
\begin{equation}\label{eq:generalFunctional1}
   J^\delta_{\alpha,\beta,\gamma,\mu}(c,s)= \underset{=:D_1((c,s),u_\delta)}{\underbrace{\frac{1}{2} \| B(c,s)-u_\delta \|_Z^2}} + \underset{=:(\alpha,\beta,\gamma,\mu)^t \mathcal{R}_1(c,s)}{\underbrace{\frac{\gamma}{2} \|s-s_\mathrm{mod}\|_Y^2 + \frac{\mu}{2} \| P(s)-s_\mathrm{calib}\|_Y^2 +\alpha \mathcal{R}_c(c) + \beta \mathcal{R}_s(s)}}
\end{equation}
  with $\alpha,\beta,\gamma,\mu \geq 0$ and 
  $$\mathcal{R}_1(c,s) = \left(
  \mathcal{R}_c(c), 
  \mathcal{R}_s(s) ,
  \frac{1}{2  } \|s-s_\mathrm{mod}\|_Y^2 , \frac{1}{2  } \| P(s)-s_\mathrm{calib}\|_Y^2    \right)^t $$
  where $\mathcal{R}_c: X \rightarrow \R_+$ and $\mathcal{R}_s: Y \rightarrow \R_+$ are proper, convex, and weakly lower semi-continuous penalty terms with respect to $c$ and $s$.
  The choice of the regularization parameters $(\alpha, \beta, \gamma,\mu)$ is crucial, of course, and in particular we consider the case, that their ratios $(\beta/\alpha, \gamma/\alpha, \mu/\alpha)$ are fixed.

For A(ii) we fix  $\gamma > 0$ and consider
\begin{equation}\label{eq:generalFunctional2}
   J^{\delta,\epsilon}_{\alpha,\beta,\mu}(c,s)= \underset{=:D_2((c,s),(u_\delta,s_{\mathrm{mod},\epsilon}))}{\underbrace{\frac{1}{2} \| B(c,s)-u_\delta \|_Z^2 + \frac{\gamma}{2} \|s-s_{\mathrm{mod},\epsilon}\|_Y^2}} + \underset{=:(\alpha, \beta, \mu)^t \mathcal{R}_2(c,s)}{\underbrace{\frac{\mu}{2} \| P(s)-s_\mathrm{calib}\|_Y^2 +\alpha \mathcal{R}_c(c) + \beta \mathcal{R}_s(s)}}
\end{equation}
where $\alpha,\beta,\mu,\mathcal{R}_c, \mathcal{R}_s$ and 
$\mathcal{R}_2$ are analogously chosen to case A(i).

\begin{remark}
The precise definition of suitable penalty terms $\mathcal{R}_c$ and $ \mathcal{R}_s$ depends on the desired type of reconstruction. Typical choices are TV- \cite{schoenlieb2017,Bredies2019,Jiang2015} or $L_p$-norms with $p=1,2$ or combinations of such norms.
\end{remark}

\begin{remark}
The general nature of the operator $P$ might allow alternative strategies to obtain higher resolution potentially without model knowledge, i.e., $\gamma=0$. This requires carefully selected choices of $P$ and $\mathcal{R}_s$, for example, specific sampling patterns and a sparsity constraint in a DCT basis have been exploited for magnetic particle imaging \cite{ilbey2019fast}. 
%We discussed a more general P in the theory part. One could for example use a large delta sample moving with a small step size, i.e., they share some support. It might be a frame from a mathematical perspective. One then could also neglect the information of type A from the model  (i.e., \gamma = 0) and it should still be possible to obtain a higher resolution. Then the quadrature rule used to obtain P might have a stronger influence.
\end{remark}

\section{Problem analysis}
In this section we discuss the basic analytic properties of the Tikhonov functionals \eqref{eq:generalFunctional1} and  \eqref{eq:generalFunctional2}
and of their minimizers. The analysis rests on the framework introduced in \cite{Hofmann:2007us} for  non-linear inverse problems in a general function space setting and on \cite{Justen2006}, which discusses bilinear operator equations in more detail.

We first discuss the assumptions on the bilinear operator $B$ needed for obtaining existence and convergence results. We then discuss the cases A(i) and A(ii) separately.

\subsection{Assumptions on $B$}
As already stated, the analysis of this paper is based on the results of \cite{Hofmann:2007us,Justen2006}
and we will use the same assumptions as introduced in these papers.
%The remainder of this work is based on the following continuity assumptions on $B$.
\begin{assumption}\label{ass:operatorB}
Let $X,Y,Z$ be Hilbert spaces and let $B: X\times Y \rightarrow Z$, $(c,s) \mapsto B(c,s)$ be a bilinear operator where $X\times Y$ is equipped with the canonical inner product $\langle (c_1,s_1), (c_2,s_2) \rangle_{X\times Y} = \langle c_1, c_2 \rangle_X + \langle s_1, s_2 \rangle_Y$. 
\begin{itemize}
    \item[(i)] There exists a constant $C>0$ such that $\| B(c,s) \|_Z \leq C \| c \|_X \| s \|_Y$ for all $(c,s)\in X \times Y$.
    \item[(ii)] $B$ is sequentially weak-weak continuous, i.e., for any sequence $\{(c^k,s^k)\}_{k\in\N}\subset X\times Y$, $(c^\ast,s^\ast)\in X\times Y$  with $(c^k,s^k) \rightharpoonup (\bar c,\bar s )$ it holds $B(c^k,s^k) \rightharpoonup B(\bar c,\bar s)$.
    In the following we use the slightly shorter term of a weakly continuous operator.
\end{itemize}
\end{assumption}

\begin{remark}
For a fixed $s\in Y$ the resulting \textit{imaging operator} is given by $A:X \rightarrow Z$, $c\mapsto B(c,s)$ and the imaging problem then is to compute a $c\in X$ for a given $u_\delta \in Z$ by solving
\begin{equation}
    A(c) \sim u_\delta.
\end{equation}

Let $s_\mathrm{calib}\in Y_n \subset Y$ be obtained from a true $s^*\in Y$ by $s_\mathrm{calib}=P(s^*)$. Such a projection of a true high-resolution $s^*$ may be derived in a calibration procedure. Then the resulting \textit{reduced imaging operator} $A_n: X \rightarrow Z $, $c\mapsto B(c,P(s^*))=B(c,s_\mathrm{calib})$ allows the formulation of the reduced imaging problem, i.e., finding $c\in X$ for $u_\delta \in Z$ by solving
\begin{equation}
    A_n(c)\sim u_\delta.
\end{equation}
Note that depending on the actual definition of $P$ and $B$ it might be possible to define the reduced imaging operator on subspaces $\tilde{X}\subset X$ and $\tilde{Z} \subset Z$. 
\end{remark}

\begin{remark}\label{rem:example_MPI}
Two simple examples for bilinear operators $B$ are a convolution operator or a Fredholm integration operator of the first kind, where $s$ is either the convolution or a general integral kernel. The latter one can be used to describe the general MPI setup, i.e., let $\Omega\subset \R^3$ be a bounded domain and \mbox{$I=(0,T)$}, \mbox{$0<T<\infty$}, is the time interval in which a measurement is obtained. Choosing the function spaces \mbox{$X=L^2(\Omega)$}, $Y=L^2(\Omega \times I)$, and $Z=L^2(I)$ the bilinear operator of interest is given by
\begin{equation} \label{eq:example_integral_operator}
    B(c,s)(t)= \int_\Omega c(x) s(x,t) \d x,
\end{equation}
which describes the relation between nanoparticle concentration $c$ and voltage measurement $B(c,s)$ for one receive coil unit.
\end{remark}

%\TKcomment{Show strong continuity of Fredholm integral operator here? Or it might be trivial? For fixed $s$ we can obtain the strong continuity in $c$ from the reference Christine send. But does it also hold for the tuple $(c,s)$?}

Note that the subsequent theory is build on the assumption that the operator $B$ is weakly continuous which is a weaker assumption as for example the strong continuity used in \cite{Bleyer:2013cw} for a special case of our functional. Nevertheless, the function space setup in Remark \ref{rem:example_MPI} which is adapted from Example 1  in \cite{Bleyer:2013cw} is not sufficient to show weak continuity of a general $B$ which also implies that strong continuity does not hold.
The following two lemmata show two potential adaptations of the examples in Remark \ref{rem:example_MPI}, which then fulfill the stated assumptions. 
One can either change the infinite-dimensional function space setup by utilising embedding theorems or alternatively use a finite-dimensional adaptation.

\begin{lemma}\label{lem:weaklyB}
Let $\Omega\subset\R^{n}$ and $I\subset\R^{m}$ be bounded domains. Let $X=L^2(\Omega)$, $Y=H_0^s(\Omega \times I)$, and $Z=L^2(I)$ for $s>0$. Then the operator $B:X\times Y \rightarrow Z$, $B(c,s)(t)=\int_\Omega c(x) s(x,t) \d x$ is weakly continuous.
\end{lemma}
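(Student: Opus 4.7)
The crucial point of the lemma is that replacing the ambient space $L^2(\Omega \times I)$ for the kernel by $H_0^s(\Omega \times I)$ activates the Rellich--Kondrachov compact embedding $H_0^s(\Omega \times I) \hookrightarrow L^2(\Omega \times I)$, valid for any $s>0$ on bounded domains (after extension by zero). This upgrades weak convergence in $Y$ to strong $L^2$ convergence of the kernels, which is exactly what is missing in the plain $L^2$ setting of Remark \ref{rem:example_MPI}.

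The plan is to reinterpret $B(c,\cdot)$ as the action of a Hilbert--Schmidt operator whose kernel is the function $s$. Concretely, for $s \in L^2(\Omega \times I)$ I define $T_s: L^2(\Omega) \to L^2(I)$ by $(T_s c)(t) = \int_\Omega c(x) s(x,t)\, \d x$, which is Hilbert--Schmidt with norm $\|T_s\|_{\mathrm{HS}} = \|s\|_{L^2(\Omega \times I)}$. As a preliminary (and also to check Assumption \ref{ass:operatorB}(i) in passing), Cauchy--Schwarz in $x$ yields
\begin{equation*}
\|B(c,s)\|_Z^2 \leq \|c\|_X^2 \, \|s\|_{L^2(\Omega \times I)}^2 \leq C\, \|c\|_X^2 \|s\|_Y^2,
\end{equation*}
where the final step uses the continuous embedding $H_0^s \hookrightarrow L^2$.

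Now suppose $(c^k, s^k) \rightharpoonup (c^\ast, s^\ast)$ in $X \times Y$, which by the canonical product inner product gives $c^k \rightharpoonup c^\ast$ in $X$ and $s^k \rightharpoonup s^\ast$ in $Y$. The compact embedding implies $s^k \to s^\ast$ strongly in $L^2(\Omega \times I)$, and the HS identity then gives $\|T_{s^k} - T_{s^\ast}\|_{\mathrm{op}} \leq \|s^k - s^\ast\|_{L^2(\Omega \times I)} \to 0$. To test weak convergence $B(c^k,s^k) \rightharpoonup B(c^\ast,s^\ast)$ against an arbitrary $\phi \in Z$, I would split
\begin{equation*}
\langle T_{s^k} c^k - T_{s^\ast} c^\ast,\, \phi \rangle_Z = \langle (T_{s^k} - T_{s^\ast}) c^k,\, \phi \rangle_Z + \langle c^k - c^\ast,\, T_{s^\ast}^\ast \phi \rangle_X .
\end{equation*}
The first summand is bounded in modulus by $\|T_{s^k} - T_{s^\ast}\|_{\mathrm{op}} \|c^k\|_X \|\phi\|_Z$, which tends to zero because $\|c^k\|_X$ is bounded (uniform boundedness). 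The second summand tends to zero because $c^k \rightharpoonup c^\ast$ and $T_{s^\ast}^\ast \phi$ is a fixed element of $X$.

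The main obstacle is really the first of these two steps, namely correctly invoking the compact Sobolev embedding for arbitrary $s>0$: one has to observe that $H_0^s(\Omega \times I)$ functions extend by zero to compactly supported elements of $H^s(\R^{n+m})$ and then apply Rellich on a ball containing $\bar\Omega \times \bar I$. Everything downstream is essentially linear algebra and the standard fact that strong convergence of operators against a bounded weakly convergent sequence yields weak convergence of the images.
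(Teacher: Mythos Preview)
Your proof is correct and follows essentially the same route as the paper: both arguments invoke the compact embedding $H_0^s(\Omega\times I)\hookrightarrow L^2(\Omega\times I)$ to upgrade $s^k\rightharpoonup s^\ast$ to strong $L^2$ convergence, then split the bilinear difference into one piece controlled by this strong convergence (against the bounded sequence $c^k$) and one piece controlled by the weak convergence $c^k\rightharpoonup c^\ast$ (against a fixed test element). Your Hilbert--Schmidt packaging and the use of $T_{s^\ast}^\ast\phi$ is exactly the paper's $\psi^\ast(x)=\int_I s^\ast(x,t)\phi(t)\,\d t$; the only cosmetic difference is that you split as $(T_{s^k}-T_{s^\ast})c^k + T_{s^\ast}(c^k-c^\ast)$ whereas the paper splits the mirror way.
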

\begin{proof}
    Let $\{(c^k,s^k)\}_{k\in\N}\subset X\times Y$, $(c^\ast,s^\ast)\in X\times Y$  with $(c^k,s^k) \rightharpoonup (c^\ast,s^\ast)$. For arbitrary $\phi \in L^2(I)$ let $\psi^k(x)=\int_I s^k(x,t) \phi(t) \d t$ and $\psi^\ast(x)=\int_I s^\ast(x,t) \phi(t) \d t$. We then obtain 
    \begin{equation}
    \| \psi^k - \psi^\ast \|_{L^2(\Omega)} \leq  \|s^k -s^\ast \|_{L^2(\Omega \times I)}  \| \phi \|_{L^2(I)}. 
    \end{equation}
    Weak convergence of $s^k \rightharpoonup s^\ast $ and the compact embedding $H_0^s(\Omega \times I) \hookrightarrow L^2(\Omega \times I)$ imply strong convergence in $L^2(\Omega \times I)$. Thus $\psi^k \rightarrow \psi^\ast$ in $L^2(\Omega)$. 
    Then we consider for arbitrary $\phi \in L^2(I)$
    \begin{align*} 
    | \langle B(c^k,s^k) - B(c^\ast, s^\ast), \phi \rangle | \leq  \left|\int_\Omega (c^k(x) - c^\ast (x) ) \psi^k(x) \d x \right| + \left|\int_\Omega \int_I c^\ast(x) \phi(t) (s^k(x,t) -s^\ast (x,t) ) \d x \d t \right|
    \end{align*}
    As $c^k \rightharpoonup c^\ast$ and $\psi^k \rightarrow \psi^\ast$ in $L^2(\Omega)$, the first term converges to zero.
    Convergence to zero of the second summand follows immediately from $s^k \rightharpoonup s^\ast$ and $(c^\ast \phi) \in L^2(\Omega \times I)$ defining a linear functional.
 As this holds for any $\phi \in L^2(I)$ this concludes the proof.
\end{proof}
Weak continuity can also be proved in a finite dimensional setting, which e.g., can be justified by using a finite dimensional approximation of a compact operator $B$ using its singular value decomposition.
\begin{lemma}
Let $\Omega\subset\R^{n}$ and $I\subset\R^{m}$ be bounded domains. $X=L^2(\Omega)$, $Z=L^2(I)$  and let $ \{\Psi_k\}_{k=1}^K \subset L^2(\Omega \times I)$ denote a set of $K$ orthonormal functions in $ L^2(\Omega \times I)$. 
 For $Y=\R^K$ and $\kappa=(\kappa_1,.., \kappa_K)^t \in Y$ 
  we define
the operator $B:X\times Y \rightarrow Z$ by $B(c,\kappa)(t)=\int_\Omega c(x) \sum_{k=1}^K \kappa_k \Psi_k(x,t) \d x$. Then $B$ is weakly continuous.
\end{lemma}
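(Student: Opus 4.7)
The plan is to exploit the finite dimensionality of $Y = \R^K$. If $(c^j, \kappa^j) \rightharpoonup (c^\ast, \kappa^\ast)$ in $X \times Y$, then in the finite-dimensional factor weak and strong convergence coincide, so each coordinate satisfies $\kappa^j_k \to \kappa^\ast_k$. Thus the only ``genuine'' weak convergence that needs to be handled is $c^j \rightharpoonup c^\ast$ in $L^2(\Omega)$, and the bilinear structure of $B$ lets us reduce this to a finite sum of pairings against fixed test functions.

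Concretely, I would pick an arbitrary $\phi \in L^2(I)$ and compute, using Fubini,
\begin{equation*}
    \langle B(c^j,\kappa^j), \phi \rangle_Z
    = \sum_{k=1}^K \kappa^j_k \int_\Omega c^j(x) \psi_{k,\phi}(x) \, \d x,
    \qquad \psi_{k,\phi}(x) := \int_I \Psi_k(x,t)\phi(t) \, \d t.
\end{equation*}
The first substep is a short verification that $\psi_{k,\phi} \in L^2(\Omega)$ for each $k$: this follows from Cauchy--Schwarz in the $t$-variable together with $\Psi_k \in L^2(\Omega \times I)$, which gives $\|\psi_{k,\phi}\|_{L^2(\Omega)} \le \|\Psi_k\|_{L^2(\Omega\times I)} \|\phi\|_{L^2(I)}$. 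Fubini is justified by the same bound.

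The second substep combines the two modes of convergence. Since $c^j \rightharpoonup c^\ast$ in $L^2(\Omega)$ and each $\psi_{k,\phi} \in L^2(\Omega)$, one has $\langle c^j, \psi_{k,\phi}\rangle_X \to \langle c^\ast, \psi_{k,\phi}\rangle_X$ for each $k = 1,\dots,K$. Combined with $\kappa^j_k \to \kappa^\ast_k$ and the fact that the sum is finite, the product-of-limits rule yields
\begin{equation*}
    \langle B(c^j,\kappa^j), \phi \rangle_Z \;\longrightarrow\; \sum_{k=1}^K \kappa^\ast_k \langle c^\ast, \psi_{k,\phi}\rangle_X = \langle B(c^\ast,\kappa^\ast), \phi\rangle_Z.
\end{equation*}
Since $\phi \in L^2(I)$ was arbitrary, this gives $B(c^j,\kappa^j) \rightharpoonup B(c^\ast,\kappa^\ast)$ in $Z$.

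There is no serious obstacle here: the orthonormality of $\{\Psi_k\}$ plays no role beyond ensuring the kernel $\sum_k \kappa_k \Psi_k$ is well defined in $L^2(\Omega\times I)$, and the argument would go through for any $\Psi_k \in L^2(\Omega \times I)$. The only minor care point is the Fubini/Cauchy--Schwarz step that moves the inner product with $\phi$ inside the $x$-integral; everything else is a consequence of finite-dimensional weak-equals-strong convergence of $\kappa^j$ and the fact that a bilinear map on a product of a weakly convergent sequence with a strongly convergent one inherits weak convergence of the image.
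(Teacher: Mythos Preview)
Your proposal is correct and follows essentially the same approach as the paper, which simply states that the proof is analogous to that of the preceding lemma (the $H_0^s$ case) once one observes that weak convergence in $Y=\R^K$ is strong. Your write-up spells this out explicitly by expanding the finite sum and applying the product-of-limits rule termwise, which is a perfectly equivalent (and arguably cleaner) way to execute the same idea.
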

\begin{proof}
The assertion follows analogously to the proof of Lemma \ref{lem:weaklyB} while weak convergence with respect to $Y$ directly implies strong convergence due to the finite dimension. 
\end{proof}

\begin{remark}
Additionally, one may consider a nonlinear dependence of $B$ on the system function $s$ assuming weak sequantial closedness of the operator. 
Theoretical results on minimizer existence, stability and convergence may be derived in an analogous way. However, the results on convergence rates rely on the bilinearity of $B$. In the present paper we stay with the bilinear setting, the extension to a nonlinear dependence is beyond the scope of this paper and remains future work.
\end{remark}

%\TKcomment{ Analytical properties: (i) In line with Sec. 3.5 one may derive parts of the basic properties (existence minimizer, stability, convergence)  in one Thm. (might be derived from the general theory). (ii) Convergence rates may be included (here one may use Thm 4.12 + Corollary 4.13 and 4.14 as potential material)}

\subsection{Basic properties}
\label{sec:basic_properties}
We further provide some basic properties of the functionals appearing in our problem formulation.

\begin{lemma}\label{lemma:projectionPenalty}
	Let $P\colon Y \rightarrow Y_n\subset Y$ be a linear and bounded operator, $P \ne 0$, and $s_\mathrm{calib} \in Y_n$. %$\|P\|=1$.
	Then the functional
	\begin{equation}
		\begin{split}
			T \colon Y &\rightarrow [0,\infty] \\
			s &\mapsto \lVert P(s)-s_\mathrm{calib}\rVert^2
		\end{split}
	\end{equation} is proper, convex and weakly lower semi-continuous.
\end{lemma}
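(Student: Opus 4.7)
The proof will be short and essentially a checklist: each property follows either from general composition principles or from a standard fact about norms on Hilbert spaces.

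First I would dispose of properness. Since $T(s)\geq 0$ for every $s\in Y$, and since $T(0)=\|s_\mathrm{calib}\|_Y^2<\infty$, the effective domain of $T$ is all of $Y$; in particular it is nonempty and $T$ nowhere takes the value $-\infty$, so $T$ is proper.

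Next I would handle convexity by recognizing $T$ as the composition of the affine map $s\mapsto P(s)-s_\mathrm{calib}$ with the convex function $y\mapsto \|y\|_Y^2$. Concretely, for $s_1,s_2\in Y$ and $\lambda\in[0,1]$, linearity of $P$ gives
\begin{equation*}
P(\lambda s_1+(1-\lambda)s_2)-s_\mathrm{calib} = \lambda(P(s_1)-s_\mathrm{calib}) + (1-\lambda)(P(s_2)-s_\mathrm{calib}),
\end{equation*}
and convexity of $\|\cdot\|_Y^2$ then yields $T(\lambda s_1+(1-\lambda)s_2)\leq \lambda T(s_1)+(1-\lambda)T(s_2)$.

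For weak lower semi-continuity, my plan is to invoke the standard fact that on a Hilbert space the squared norm is weakly lower semi-continuous, combined with the fact that bounded linear operators are weakly-to-weakly continuous. Explicitly, let $\{s^k\}_{k\in\N}\subset Y$ with $s^k\rightharpoonup s^\ast$. Boundedness and linearity of $P$ imply $P(s^k)\rightharpoonup P(s^\ast)$ in $Y$, and translation by the fixed element $s_\mathrm{calib}\in Y_n\subset Y$ preserves weak convergence, so $P(s^k)-s_\mathrm{calib}\rightharpoonup P(s^\ast)-s_\mathrm{calib}$. Weak lower semi-continuity of $\|\cdot\|_Y^2$ then gives
\begin{equation*}
T(s^\ast) = \|P(s^\ast)-s_\mathrm{calib}\|_Y^2 \leq \liminf_{k\to\infty} \|P(s^k)-s_\mathrm{calib}\|_Y^2 = \liminf_{k\to\infty} T(s^k).
\end{equation*}

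No step should be a serious obstacle; the only thing to watch out for is to be explicit that $P$ being \emph{bounded linear} (rather than the stronger projection property mentioned in the preceding remark) is all that is needed, since bounded linear operators automatically map weakly convergent sequences to weakly convergent sequences. The assumption $P\neq 0$ and the specific choice of $s_\mathrm{calib}$ play no role in any of the three properties; they simply ensure the statement is not vacuous in the broader context of the paper.
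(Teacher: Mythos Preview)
Your proof is correct and follows essentially the same route as the paper's own argument: properness is immediate, convexity comes from composing the affine map $s\mapsto P(s)-s_\mathrm{calib}$ with a convex function, and weak lower semi-continuity follows from weak-to-weak continuity of the bounded linear operator $P$ together with weak lower semi-continuity of the norm. The only cosmetic difference is that the paper first shows convexity of $f(s)=\|P(s)-s_\mathrm{calib}\|$ via the triangle inequality and then deduces convexity of $T=f^2$, whereas you apply convexity of $\|\cdot\|_Y^2$ directly; both are equally valid and amount to the same argument.
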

See Appendix \ref{app:appendix1} for a proof of this Lemma.

In general, an ill-posed inverse problem may have multiple solutions, hence we introduce  the usual concept of an $\penc$-minimizing solution.
\begin{definition}
For $s^\ast \in Y$ and $u^\ast\in range(B) \subset Z$
an element $c^\dag\in X$ is called an $\penc$-minimizing solution if
\begin{equation}
	c^\dag = \arg \min_c \{ \penc(c) \mid B(c,s^\ast) = u^\ast \}.
\end{equation}
\end{definition}

In the next lemma we state the derivative of a bilinear operator in a Hilbert space setting and an inequality to be used later.
\begin{lemma}\label{lemma:FrechetDerivative1} 
For a bilinear operator $B\colon X \times Y \rightarrow Z$ with $\|B(c,s)\|_Z \leq C \|c\|_X \|s\|_Y$ the Fr\'{e}chet derivative at  $(c,s)\in X\times Y$ is given by
	\begin{equation}
		B^\prime(c,s)(x,y) = B(c,y)+B(x,s)
	\end{equation}
	for any $(x,y) \in X \times Y $. The residual of the first degree Taylor expansion satisfies
	\begin{equation} \label{eq:BTaylorRemainder}
		\| B(c+x,s+y)-B(c,s)-B^\prime(c,s)(x,y) \|_Z \leq \frac{C}{2} \| (x,y) \|_{X \times Y}^2.
	\end{equation}\end{lemma}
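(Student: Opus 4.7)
The plan is to compute the residual directly using bilinearity, then identify the derivative and bound the residual via the norm estimate on $B$.

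First I would expand $B(c+x, s+y)$ using bilinearity in both arguments. This gives
\begin{equation*}
    B(c+x,s+y) = B(c,s) + B(c,y) + B(x,s) + B(x,y),
\end{equation*}
so if one sets $L(x,y) := B(c,y)+B(x,s)$, then $B(c+x,s+y)-B(c,s)-L(x,y) = B(x,y)$ exactly. The map $L\colon X\times Y\to Z$ is clearly linear in $(x,y)$ and bounded, since $\|L(x,y)\|_Z \leq C\|c\|_X\|y\|_Y + C\|x\|_X\|s\|_Y \leq C\|(c,s)\|_{X\times Y}\|(x,y)\|_{X\times Y}$ by Cauchy--Schwarz in $\R^2$.

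Next I would verify the Fréchet-derivative limit. Applying the boundedness hypothesis to the remainder gives $\|B(x,y)\|_Z \leq C\|x\|_X\|y\|_Y$, and the Young-type inequality $\|x\|_X\|y\|_Y \leq \tfrac12(\|x\|_X^2+\|y\|_Y^2) = \tfrac12\|(x,y)\|_{X\times Y}^2$ immediately yields the claimed bound
\begin{equation*}
    \|B(c+x,s+y)-B(c,s)-L(x,y)\|_Z = \|B(x,y)\|_Z \leq \tfrac{C}{2}\|(x,y)\|_{X\times Y}^2.
\end{equation*}
Dividing by $\|(x,y)\|_{X\times Y}$ shows that the quotient tends to zero as $\|(x,y)\|_{X\times Y}\to 0$, so $L = B'(c,s)$ in the Fréchet sense.

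There is no real obstacle here: the entire lemma follows from bilinearity plus the elementary AM--GM-type inequality on $\|x\|_X$ and $\|y\|_Y$. The only small point worth being careful about is making sure the norm on $X\times Y$ is the canonical one $\|(x,y)\|_{X\times Y}^2 = \|x\|_X^2+\|y\|_Y^2$ fixed in Assumption \ref{ass:operatorB}, so that the factor $\tfrac12$ in the residual bound comes out with constant $C/2$ rather than $C$.
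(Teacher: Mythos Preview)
Your proof is correct and follows essentially the same approach as the paper: expand $B(c+x,s+y)$ by bilinearity, identify the remainder as $B(x,y)$, and bound it via $\|B(x,y)\|_Z\leq C\|x\|_X\|y\|_Y\leq \tfrac{C}{2}\|(x,y)\|_{X\times Y}^2$. The only addition is your explicit check that $L$ is bounded linear, which the paper leaves implicit.
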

	Again, this result follows from standard arguments in functional analysis. For completeness a proof is included in Appendix \ref{app:appendix1}.

\subsection{Setup A(i)}
In this setting we exploit a high-resolution reference $s_\mathrm{mod}$ to formulate an additional penalty term for the system function $s$. 
We thus consider the functional
\begin{equation}\label{eq:generalFunctional1_2}
   J^\delta_{\alpha,\beta,\gamma,\mu}(c,s)= \underset{=:D_1((c,s),u_\delta)}{\underbrace{\frac{1}{2} \| B(c,s)-u_\delta \|_Z^2}} + \underset{=:(\alpha,\beta,\gamma,\mu)^t \mathcal{R}_1(c,s)}{\underbrace{\frac{\gamma}{2} \|s-s_\mathrm{mod}\|_Y^2 + \frac{\mu}{2} \| P(s)-s_\mathrm{calib}\|_Y^2 +\alpha \mathcal{R}_c(c) + \beta \mathcal{R}_s(s)}}
\end{equation}
including four regularization parameters $(\alpha,\beta,\gamma,\mu)$. 
For fixed ratios of regularization parameters, i.e.,  $\nu_1=\mu/ \alpha$, $\nu_2= \beta/\alpha$, and $\nu_3=\gamma/\alpha$ the desired regularization properties follow immediately from the general theory in \cite{Hofmann:2007us}. In the following theorem we consider a slightly more general setting, where these ratios are only obtained asymptotically.

\begin{theorem}\label{thm:existence_etc_1}
	Let Assumption \ref{ass:operatorB} be fulfilled. Let $\mathcal{R}_c: X \rightarrow \R_+$ and $\mathcal{R}_s: Y \rightarrow \R_+$ be proper, convex, and weakly lower semi-continuous. Let $P:Y \rightarrow Y_n\subset Y$ be a  linear and bounded operator.  Then the following holds:
	\begin{itemize}
	    \item[(i)] 	The functional $J^\delta_{\alpha,\beta,\gamma,\mu}$ as defined in~\eqref{eq:generalFunctional1_2} has a minimizer. 
	    
	    \item[(ii)] (Continuity for fixed regularization parameters) 	Let the regularization parameters $\alpha, \beta,\gamma, \mu >0$ be fixed. Consider the sequence $(u_{\delta_j})_{j\in\N}$ with $u_{\delta_j} \rightarrow u_\delta$. Let $(c^j,s^j)$ denote a minimizer of $J_{\alpha,\beta,\gamma,\mu}^{\delta_j}$ with noisy $u_{\delta_j}$. Then there exists a weakly convergent subsequence of $(c^j,s^j)$ and the limit  of every weakly convergent subsequence is a minimizer $(\tilde{c},\tilde{s})$ of the functional $J_{\alpha,\beta,\gamma,\mu}^{\delta}$. Moreover, for each weakly convergent subsequence $(c^i,s^i)$, $(\alpha,\beta,\gamma,\mu)^t\mathcal{R}_1((c^i,s^i)) \rightarrow (\alpha,\beta,\gamma,\mu)^t\mathcal{R}_1((\tilde{c},\tilde{s}))$.
	    
	    \item[(iii)] (Convergence for diminishing regularization parameters)	Assume that the data sequence $(u_{\delta_j})_j$ with $\|u_{\delta_j}-u^*\| \leq \delta_j$ for $\delta_j \rightarrow 0$ is given. Assume that the regularization parameters are chosen according to the noise level such that $\alpha_j=\alpha(\delta_j)$, $\beta_j = \beta(\delta_j)$, $\gamma_j = \gamma(\delta_j)$, and $\mu_j=\mu(\delta_j)$ are monotonically decreasing and fulfill $\alpha_j\to 0$, $\beta_j \to 0$, $\gamma_j \to 0$, and $\mu_j \to 0$. Further assume that
		\begin{equation} \label{eq:parameterChoice1}
		\lim_{j\to\infty} \frac{\delta_j^2}{\alpha_j}=0,
		\quad
		\lim_{j\to\infty} \frac{\beta_j}{\alpha_j} = \nu_1 \quad 
		\lim_{j\to\infty} \frac{\gamma_j}{ \alpha_j} = \nu_2
		\quad \text{ and } \quad
		\lim_{j\to\infty} \frac{\mu_j}{ \alpha_j} = \nu_3
	\end{equation}
	holds for some $0<\nu_1,\nu_2, \nu_3<\infty$ such that $\nu_1\leq \beta_j / \alpha_j$, $\nu_2\leq \mu_j/\alpha_j$, and $\gamma_j/\alpha_j \leq \nu_3$.
	Let
	\begin{equation}
		(c^j,s^j)_j \coloneqq \left( c_{\alpha_j,\beta_j,\gamma_j,\mu_j}^{\delta_j},s_{\alpha_j,\beta_j,\gamma_j,\mu_j}^{\delta_j} \right)_j
	\end{equation}
	denote the minimizing sequence of~\eqref{eq:generalFunctional1_2} obtained from noisy $u_{\delta_j}$.

	Then there exists a weakly convergent subsequence of $(c^j,s^j)_j$. The limit of every weakly convergent subsequence of $(c^j,s^j)_j$ is an $(1,\nu_1,\nu_2,\nu_3)^t \mathcal{R}_1$-minimizing solution.
	    
	\end{itemize}
\end{theorem}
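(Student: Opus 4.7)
The plan is to follow the Tikhonov-regularisation template of \cite{Hofmann:2007us,Justen2006}, specialised to the composite functional \eqref{eq:generalFunctional1_2}.

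For part (i), I would invoke the direct method. The functional is non-negative and finite at, e.g., $(0, s_{\mathrm{mod}})$, so any minimising sequence $(c^k, s^k)$ has uniformly bounded values of $J^\delta_{\alpha,\beta,\gamma,\mu}$. The term $\tfrac{\gamma}{2}\|s - s_{\mathrm{mod}}\|_Y^2$ then bounds $\|s^k\|_Y$, while coercivity of $\mathcal{R}_c$ (implicit in this framework) bounds $\|c^k\|_X$. Extract a weakly convergent subsequence $(c^k, s^k) \rightharpoonup (\hat c, \hat s)$. By Assumption~\ref{ass:operatorB}(ii), $B(c^k, s^k) \rightharpoonup B(\hat c, \hat s)$, so the data-fidelity term is weakly lower semicontinuous; the remaining four summands are wlsc by assumption on $\mathcal{R}_c, \mathcal{R}_s$, by continuity and convexity of the quadratic model term, and by Lemma~\ref{lemma:projectionPenalty}. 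Summing gives $J^\delta(\hat c, \hat s) \leq \liminf J^\delta(c^k, s^k)$, so $(\hat c, \hat s)$ is a minimiser.

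For part (ii), fix all regularisation parameters and let $(c^j, s^j)$ minimise $J^{\delta_j}$. Comparing with any fixed admissible pair and using $u_{\delta_j} \to u_\delta$ shows $J^{\delta_j}(c^j, s^j)$ is uniformly bounded, which bounds every penalty and thus $\|c^j\|_X, \|s^j\|_Y$. Extract $(c^i, s^i) \rightharpoonup (\tilde c, \tilde s)$ weakly. Weak continuity of $B$ combined with strong convergence of $u_{\delta_i}$ yields $B(c^i, s^i) - u_{\delta_i} \rightharpoonup B(\tilde c, \tilde s) - u_\delta$, and together with wlsc of every penalty this gives
\begin{equation*}
   J^\delta(\tilde c, \tilde s) \leq \liminf_i J^{\delta_i}(c^i, s^i) \leq \limsup_i J^{\delta_i}(c^i, s^i) \leq \limsup_i J^{\delta_i}(\bar c, \bar s) = J^\delta(\bar c, \bar s)
\end{equation*}
for any minimiser $(\bar c, \bar s)$ of $J^\delta$. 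Hence $(\tilde c, \tilde s)$ itself minimises $J^\delta$ and every inequality above is an equality; wlsc of each term plus convergence of their sum then forces separate convergence, giving the claim for $(\alpha,\beta,\gamma,\mu)^t \mathcal{R}_1$.

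For part (iii), the crux is comparison with an $(1,\nu_1,\nu_2,\nu_3)^t \mathcal{R}_1$-minimising solution $(c^\dagger, s^*)$, which by definition satisfies $B(c^\dagger, s^*) = u^*$. Minimality of $(c^j, s^j)$ and $\|u_{\delta_j} - u^*\|_Z \leq \delta_j$ yield, after dividing by $\alpha_j$,
\begin{equation*}
   \frac{\|B(c^j, s^j) - u_{\delta_j}\|_Z^2}{2\alpha_j} + \mathcal{R}_c(c^j) + \frac{\beta_j}{\alpha_j}\mathcal{R}_s(s^j) + \frac{\gamma_j}{2\alpha_j}\|s^j - s_{\mathrm{mod}}\|_Y^2 + \frac{\mu_j}{2\alpha_j}\|P(s^j) - s_{\mathrm{calib}}\|_Y^2 \leq \frac{\delta_j^2}{2\alpha_j} + \mathcal{R}_c(c^\dagger) + \frac{\beta_j}{\alpha_j}\mathcal{R}_s(s^*) + \frac{\gamma_j}{2\alpha_j}\|s^* - s_{\mathrm{mod}}\|_Y^2 + \frac{\mu_j}{2\alpha_j}\|P(s^*) - s_{\mathrm{calib}}\|_Y^2.
\end{equation*}
Since the ratios converge to finite $\nu_i$ and $\delta_j^2/\alpha_j \to 0$, the right-hand side is bounded, yielding uniform bounds on every normalised penalty (hence on $\|s^j\|_Y$ and, via coercivity of $\mathcal{R}_c$, on $\|c^j\|_X$) and forcing $\|B(c^j, s^j) - u_{\delta_j}\|_Z \to 0$. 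Extract a weak subsequential limit $(\tilde c, \tilde s)$; weak continuity of $B$ together with $u_{\delta_j} \to u^*$ gives $B(\tilde c, \tilde s) = u^*$, so $(\tilde c, \tilde s)$ is admissible. Taking the liminf of the displayed inequality, with wlsc of each penalty paired with the one-sided ratio bounds in~\eqref{eq:parameterChoice1}, identifies $(\tilde c, \tilde s)$ as an $(1,\nu_1,\nu_2,\nu_3)^t \mathcal{R}_1$-minimising solution. The delicate point is precisely this last liminf step: the asymptotic-ratio setup requires the one-sided bounds in~\eqref{eq:parameterChoice1} to be paired correctly with the direction of wlsc (which supplies $\liminf \geq$), so that lower bounds on the ratios are used on the minimiser side and upper bounds on the comparison side; with fixed ratios the argument would be immediate.
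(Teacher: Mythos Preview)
Your proposal is correct and follows essentially the same route as the paper. For (i) and (ii) the paper simply invokes \cite[Theorems~3.1,~3.2]{Hofmann:2007us} rather than spelling out the direct method, whereas you expand the argument; the content is identical. For (iii) the paper packages the key comparison more compactly by first recording the pointwise inequality $\alpha_j(1,\nu_1,\nu_2,\nu_3)^t\mathcal{R}_1 \le (\alpha_j,\beta_j,\gamma_j,\mu_j)^t\mathcal{R}_1$ (from the one-sided ratio bounds), using it to obtain $\limsup_j (1,\nu_1,\nu_2,\nu_3)^t\mathcal{R}_1(c^j,s^j) \le (1,\nu_1,\nu_2,\nu_3)^t\mathcal{R}_1(c^\ast,s^\ast)$ and boundedness of a fixed-$\alpha_0$ functional, and then deferring the remaining subsequence-and-wlsc steps to the proof of \cite[Theorem~3.5]{Hofmann:2007us}. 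Your version unpacks those remaining steps explicitly, and your closing remark about pairing the one-sided ratio bounds with the direction of wlsc is precisely the content of the paper's packaged inequality. One minor caution: your appeal to ``coercivity of $\mathcal{R}_c$ (implicit in this framework)'' is exactly what the Hofmann et~al.\ level-set compactness assumption provides; the paper inherits it by citation rather than stating it separately, so your parenthetical is appropriate.
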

\begin{proof}

Assertion (i) and (ii) immediately follow from \cite[Theorems 3.1, 3.2]{Hofmann:2007us}.
For (iii) we exploit that $\alpha_j (1,\nu_1,\nu_2,\nu_3)^t \mathcal{R}_1(c,s) \leq (\alpha_j,\beta_j,\gamma_j, \mu_j)^t \mathcal{R}_1(c,s)$ for any $(c,s) \in X \times Y$. We obtain from 
\begin{equation}
    \frac12 \| B(c^j,s^j) - u_{\delta_j} \|_Z ^2 + (\alpha_j,\beta_j,\gamma_j, \mu_j)^t \mathcal{R}_1(c^j,s^j) \leq \frac12 \delta_j^2 +   (\alpha_j,\beta_j,\gamma_j, \mu_j)^t \mathcal{R}_1(c^\ast,s^\ast)
\end{equation}
that $\lim_{j \to \infty} \| B(c^j,s^j) - u_{\delta_j} \|_Z =0$ and $(1,\nu_1,\nu_2, \nu_3)^t \mathcal{R}_1(c^j,s^j) \leq \frac{ \delta_j^2}{2\alpha_j} +   \frac{1}{\alpha_j}(\alpha_j,\beta_j,\gamma_j, \mu_j)^t \mathcal{R}_1(c^\ast,s^\ast)$.
This implies $$\limsup_{j\to \infty}{(1,\nu_1,\nu_2, \nu_3)^t \mathcal{R}_1(c^j,s^j)}\leq \limsup_{j\to \infty}{\frac{1}{\alpha_j}(\alpha_j,\beta_j,\gamma_j, \mu_j)^t \mathcal{R}_1(c^j,s^j)} \leq (1,\nu_1,\nu_2, \nu_3)^t \mathcal{R}_1(c^\ast,s^\ast). $$
We thus obtain
\begin{align*}
    &\limsup_{j\to \infty}{\left(\frac12 \|B(c^j,s^j)-u_{\delta_j}\|_{Z}^2 + \alpha_0 (1,\nu_1,\nu_2, \nu_3)^t \mathcal{R}_1(c^j,s^j)\right)} \\
    \leq & \limsup_{j\to \infty}{\left( \frac12 \|B(c^j,s^j)-u_{\delta_j}\|_{Z}^2  + \alpha_j (1,\nu_1,\nu_2, \nu_3)^t \mathcal{R}_1(c^j,s^j)\right)} + \limsup_{j\to \infty}{\left( ( \alpha_0 -\alpha_j  )(1,\nu_1,\nu_2, \nu_3)^t \mathcal{R}_1(c^j,s^j)  \right) }\\  \leq & \alpha_0 (1,\nu_1,\nu_2, \nu_3)^t \mathcal{R}_1(c^\ast,s^\ast) < \infty. 
\end{align*}
The assertion (iii) then follows analogously by the remaining steps in the proof of \cite[Theorem 3.5]{Hofmann:2007us}.
\end{proof}

A first convergence rate result can be obtained by making the following general assumption.

\begin{assumption}\label{assumption:general_problem1_source}
Let Assumption \ref{ass:operatorB} be fulfilled. Further assume
	\begin{enumerateAssumption}
	\item There exists an $(1,\nu_1,\nu_2,\nu_3)^t \mathcal{R}_1$-minimizing solution $(c^\ast,s^\ast)\in X\times Y$.
	\item
	There exist constants $\kappa_1 \in [0,1)$, $\kappa_2	\geq 0$,  and a subgradient $(\xi_{c^\ast},\xi_{s^\ast})\in \partial (1,\nu_1,\nu_2,\nu_3)^t \mathcal{R}_1(c^\ast,s^\ast)$, such that %chktex 9
	\begin{equation}
		\langle (\xi_{c^\ast},\xi_{s^\ast}), (c^\ast -c,s^\ast-s)\rangle \leq \kappa_1 D_{(1,\nu_1,\nu_2,\nu_3)^t \mathcal{R}_1}^{(\xi_{c^\ast},\xi_{s^\ast})} ((c,s),(c^\ast,s^\ast)) + \kappa_2 \lVert B(c,s)-B(c^\ast,s^\ast) \rVert 
	\end{equation}
	for all $(c,s)\in \{ (c,s) \in X \times Y |J^\delta_{\alpha_\mathrm{max},\alpha_\mathrm{max}\nu_1,\alpha_\mathrm{max}\nu_2,\alpha_\mathrm{max}\nu_3}(c,s)  \leq M \}$ and $M > \alpha_\mathrm{max} ((1,\nu_1,\nu_2,\nu_3)^t \mathcal{R}_1(c^\ast,s^\ast) + \delta^2/\alpha)$ for given $0<\delta, \alpha <\infty$, $\alpha \leq \alpha_\mathrm{max}$. %chktex 35
	\end{enumerateAssumption} 
\end{assumption}
%\begin{remark}
%For $\delta \leq \delta_\mathrm{max}$ and $\alpha \sim \delta$, i.e., there exists a constant $q>0$ such that $\alpha=q \delta$, a natural choice is $M > \alpha_\mathrm{max}((1,\nu_1,\nu_2,\nu_3)^t \mathcal{R}_1(c^\ast,s^\ast) + q \delta_\mathrm{max})$.
%\end{remark}
These are the assumptions required for \cite[Theorem 4.4]{Hofmann:2007us} and  we directly obtain the following result on convergence rates.
\begin{theorem}
Let $u_\delta \in Z$ with $\| u^\ast - u_\delta \|_Z \leq \delta $.
Let Assumption \ref{assumption:general_problem1_source} be fulfilled for all $(\delta,\alpha)$ tuples defined below.
For $0<\alpha \leq \alpha_\mathrm{max}< \infty$, $\beta=\nu_1\alpha$, $\gamma=\nu_2\alpha$, and $\mu=\nu_3\alpha$ the minimizer of the functional $J^\delta_{\alpha,\beta,\gamma,\mu}$ as defined in \eqref{eq:generalFunctional1_2} is denoted by $(c^\alpha,s^\alpha)$. Further assume $\alpha \sim \delta$. Then it holds
\begin{equation}
	D_{(1,\nu_1,\nu_2,\nu_3)^t \mathcal{R}_1}^{(\xi_{c^\ast},\xi_{s^\ast})} ((c,s),(c^\ast,s^\ast)) = \mathcal{O}(\delta ) \quad \textnormal{and} \quad
	\lVert B(c^\alpha,s^\alpha)-B(c^\ast,s^\ast) \rVert = \mathcal{O}(\delta).
\end{equation}

\end{theorem}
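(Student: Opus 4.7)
The plan is to specialize the standard Tikhonov convergence-rate argument (essentially Theorem 4.4 of Hofmann et al.) to the composite penalty $(1,\nu_1,\nu_2,\nu_3)^t \mathcal{R}_1$. By Lemma~\ref{lemma:projectionPenalty} together with the hypotheses on $\mathcal{R}_c$ and $\mathcal{R}_s$, this combined penalty is proper, convex, and weakly lower semi-continuous, so the framework applies. I would reproduce the key chain of inequalities directly in order to make the constants transparent.

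The starting point is the minimality of $(c^\alpha,s^\alpha)$ tested against $(c^\ast,s^\ast)$: since $\|B(c^\ast,s^\ast)-u_\delta\|_Z \leq \delta$, one obtains
\[
\tfrac{1}{2}\|B(c^\alpha,s^\alpha)-u_\delta\|_Z^2 + \alpha (1,\nu_1,\nu_2,\nu_3)^t \mathcal{R}_1(c^\alpha,s^\alpha) \leq \tfrac{1}{2}\delta^2 + \alpha (1,\nu_1,\nu_2,\nu_3)^t \mathcal{R}_1(c^\ast,s^\ast).
\]
Subtracting $\alpha \langle (\xi_{c^\ast},\xi_{s^\ast}),(c^\alpha-c^\ast,s^\alpha-s^\ast)\rangle$ from both sides and collecting the terms on the left into the Bregman distance $D := D_{(1,\nu_1,\nu_2,\nu_3)^t \mathcal{R}_1}^{(\xi_{c^\ast},\xi_{s^\ast})}((c^\alpha,s^\alpha),(c^\ast,s^\ast))$ yields
\[
\tfrac{1}{2}\|B(c^\alpha,s^\alpha)-u_\delta\|_Z^2 + \alpha D \leq \tfrac{1}{2}\delta^2 + \alpha \langle (\xi_{c^\ast},\xi_{s^\ast}),(c^\ast-c^\alpha,s^\ast-s^\alpha)\rangle.
\]

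Next I would invoke the variational source condition in Assumption~\ref{assumption:general_problem1_source}(ii) to replace the inner product by $\kappa_1 D + \kappa_2 \|B(c^\alpha,s^\alpha)-B(c^\ast,s^\ast)\|_Z$, and use the triangle inequality with $\|u^\ast-u_\delta\|_Z\leq\delta$ to estimate $\|B(c^\alpha,s^\alpha)-B(c^\ast,s^\ast)\|_Z \leq \|B(c^\alpha,s^\alpha)-u_\delta\|_Z + \delta$. A Young inequality of the form $\alpha\kappa_2 r \leq r^2/4 + \alpha^2\kappa_2^2$ applied to $r=\|B(c^\alpha,s^\alpha)-u_\delta\|_Z$ then absorbs a quarter of the squared residual into the left-hand side, giving
\[
\tfrac{1}{4}\|B(c^\alpha,s^\alpha)-u_\delta\|_Z^2 + \alpha(1-\kappa_1)D \leq \tfrac{1}{2}\delta^2 + \alpha\kappa_2\delta + \alpha^2 \kappa_2^2.
\]
Because $\kappa_1\in[0,1)$ and the a priori parameter choice is $\alpha \sim \delta$, dividing by $\alpha(1-\kappa_1)$ yields $D=\mathcal{O}(\delta)$, while dividing by $4$ together with a final triangle inequality gives $\|B(c^\alpha,s^\alpha)-B(c^\ast,s^\ast)\|_Z = \mathcal{O}(\delta)$.

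The main subtlety, rather than a genuine obstacle, lies in ensuring that $(c^\alpha,s^\alpha)$ actually lies in the sublevel set on which Assumption~\ref{assumption:general_problem1_source}(ii) is assumed to hold. This is checked a posteriori: the minimizing inequality with $\alpha\leq\alpha_\mathrm{max}$ bounds $J^\delta_{\alpha_\mathrm{max},\alpha_\mathrm{max}\nu_1,\alpha_\mathrm{max}\nu_2,\alpha_\mathrm{max}\nu_3}(c^\alpha,s^\alpha)$ by $\alpha_\mathrm{max}((1,\nu_1,\nu_2,\nu_3)^t\mathcal{R}_1(c^\ast,s^\ast)+\delta^2/\alpha)$, which is precisely the threshold $M$ specified in the assumption, so the source condition is legitimately applicable throughout.
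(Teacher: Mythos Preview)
Your proposal is correct and coincides with the paper's approach: the paper does not give an independent proof but simply notes that Assumption~\ref{assumption:general_problem1_source} is exactly the hypothesis of \cite[Theorem~4.4]{Hofmann:2007us} and invokes that result directly. Your argument is a faithful unpacking of that theorem's proof in the present notation, including the sublevel-set verification, so nothing is missing.
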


\subsection{Setup A(ii)}
\label{sec:setupA2}
In this setting we consider  a high-resolution approximation $s_{\mathrm{mod},\epsilon}$ of the true but unknown
system function $s^*$ satisfying $\|s_{\mathrm{mod},\epsilon}-s^*\|_Y \le \epsilon.$
As already discussed in the previous section, one can consider $s_{\mathrm{mod},\epsilon}$ as additional data in this setting. 
Hence, we fix $\gamma>0$ and consider the functional
\begin{equation}\label{eq:generalFunctional2_2}
   J^{\delta,\epsilon}_{\alpha,\beta,\mu}(c,s)= \underset{=:D_2((c,s),(u_\delta,s_{\mathrm{mod},\epsilon}))}{\underbrace{\frac{1}{2} \| B(c,s)-u_\delta \|_Z^2 + \frac{\gamma}{2} \|s-s_{\mathrm{mod},\epsilon}\|_Y^2}} + \underset{=:(\alpha, \beta, \mu)^t \mathcal{R}_2(c,s)}{\underbrace{\frac{\mu}{2} \| P(s)-s_\mathrm{calib}\|_Y^2 +\alpha \mathcal{R}_c(c) + \beta \mathcal{R}_s(s)}} . 
\end{equation}
The multi-criterial penalty term involving three regularization parameters $(\alpha, \beta, \mu)$ can be reduced to the usual single parameter setting by fixing the ratios $\nu_1=\mu/ \alpha$ and $\nu_2= \beta/\alpha$.
As in the previous section, this will allow us to use the available regularization results directly and will be utilized later in this section.
However, we start with an adaptation which allows slightly more freedom in the choice of the regularization parameters.

\begin{theorem}\label{thm:existence_etc_2}
	Let Assumption \ref{ass:operatorB} be fulfilled. Let $\mathcal{R}_c: X \rightarrow \R_+$ and $\mathcal{R}_s: Y \rightarrow \R_+$ be proper, convex, and weakly lower semi-continuous. Let $P:Y \rightarrow Y_n\subset Y$ be a  linear and bounded operator.  Then the following holds:
	\begin{itemize}
	    \item[(i)] 	The functional $\J$ as defined in~\eqref{eq:generalFunctional2_2} has a minimizer. 
	    
	    \item[(ii)] (Continuity for fixed regularization parameters) 	Let the regularization parameters $\alpha, \beta, \mu >0$ be fixed. Consider sequences $(u_{\delta_j})_{j\in\N}$ and $(s_{\epsilon_j})_{j\in\N}$  with $u_{\delta_j} \rightarrow u_\delta$ and $s_{\epsilon_j} \rightarrow s_{\mathrm{mod},\epsilon}$. Let $(c^j,s^j)$ denote a minimizer of $J_{\alpha,\beta,\mu}^{\delta_j,\epsilon_j}$ with noisy $(u_{\delta_j},s_{\epsilon_j})$. Then there exists a weakly convergent subsequence of $(c^j,s^j)$ and the limit  of every weakly convergent subsequence is a minimizer $(\tilde{c},\tilde{s})$ of the functional $\J$. Moreover, for each weakly convergent subsequence $(c^j,s^j)$ we have $$(\alpha, \beta, \mu)^t\mathcal{R}_2(c^j,s^j) \rightarrow (\alpha, \beta, \mu)^t\mathcal{R}_2(\tilde{c},\tilde{s})  \mbox{\ \ and \ \ } J_{\alpha,\beta,\mu}^{\delta_j,\epsilon_j} (c^j,s^j)\rightarrow J_{\alpha,\beta,\mu}^{\delta,\epsilon}(\tilde c, \tilde s) \ . $$
	    
	    \item[(iii)] (Convergence for vanishing noise levels)	Assume that data sequences $(u_{\delta_j})_j$ and $(s_{\epsilon_j})_j$ with $\|u_{\delta_j}-u^*\| \leq \delta_j$ and $\| s_{\epsilon_j}-s^* \| \leq \epsilon_j$ for $\delta_j \rightarrow 0$ and $\epsilon_j \rightarrow 0$ are given. Assume that the regularization parameters are chosen according to the noise level such that $\alpha_j=\alpha(\delta_j,\epsilon_j)$, $\beta_j = \beta(\delta_j,\epsilon_j)$ and $\mu_j=\mu(\delta_j,\epsilon_j)$ are monotonically decreasing and fulfill $\alpha_j\to 0$, $\beta_j \to 0$ and $\mu_j \to 0$. Further assume that
		\begin{equation} \label{eq:parameterChoice}
		\lim_{j\to\infty} \frac{\delta_j^2+\gamma \epsilon_j^2}{\alpha_j}=0,
		\quad
		\lim_{j\to\infty} \frac{\beta_j}{\alpha_j} = \nu_1 \quad \text{ and } \quad \lim_{j\to\infty} \frac{\mu_j}{ \alpha_j} = \nu_2
	\end{equation}
	 for some $0<\nu_1,\nu_2<\infty$ such that $\nu_1\leq \beta_j / \alpha_j$ and $\nu_2\leq \mu_j/\alpha_j$.
	Let
	\begin{equation}
		(c^j,s^j)_j \coloneqq \left( c_{\alpha_j,\beta_j,\mu_j}^{\delta_j,\epsilon_j},s_{\alpha_j,\beta_j,\mu_j}^{\delta_j,\epsilon_j} \right)_j
	\end{equation}
	denote the minimizing sequence of~\eqref{eq:generalFunctional2_2} obtained from data $u_{\delta_j}$ and $s_{\epsilon_j}$.

	Then there exists a weakly convergent subsequence of $(c^j,s^j)_j$ with $s^j\rightharpoonup s^*$ and the limit of every weakly convergent subsequence of $(c^j)_j$ is a $\penc$-minimizing solution.
	    
	\end{itemize}
\end{theorem}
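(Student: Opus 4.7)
The plan is to follow the strategy of the proof of Theorem \ref{thm:existence_etc_1} and reduce Theorem \ref{thm:existence_etc_2} to the general framework of \cite{Hofmann:2007us} by absorbing the extra term $\tfrac{\gamma}{2}\|s - s_{\mathrm{mod},\epsilon}\|_Y^2$ into an augmented discrepancy. Concretely, I would define $\tilde{B}\colon X \times Y \to Z \times Y$ by $\tilde{B}(c,s) = (B(c,s), \sqrt{\gamma}\, s)$, the augmented data $\tilde{u}_{\delta,\epsilon} = (u_\delta, \sqrt{\gamma}\, s_{\mathrm{mod},\epsilon})$, and equip $Z \times Y$ with the canonical product inner product. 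Then $D_2((c,s),(u_\delta,s_{\mathrm{mod},\epsilon})) = \tfrac{1}{2}\|\tilde{B}(c,s) - \tilde{u}_{\delta,\epsilon}\|_{Z\times Y}^2$ and the joint noise level is $\tilde{\delta}_j := (\delta_j^2 + \gamma \epsilon_j^2)^{1/2}$, satisfying $\|\tilde{B}(c^\ast,s^\ast) - \tilde{u}_{\delta_j,\epsilon_j}\|_{Z\times Y} \le \tilde{\delta}_j$. The operator $\tilde{B}$ inherits weak continuity from $B$ since $s \mapsto \sqrt{\gamma}\, s$ is linear and bounded, and $\mathcal{R}_2$ is proper, convex and weakly lower semi-continuous by the hypotheses on $\mathcal{R}_c,\mathcal{R}_s$ together with Lemma \ref{lemma:projectionPenalty}.

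With this reformulation, assertions (i) and (ii) follow directly from \cite[Theorems 3.1 and 3.2]{Hofmann:2007us} applied to $\tilde{B}$ and $\mathcal{R}_2$, exactly as in the proof of Theorem \ref{thm:existence_etc_1}; the joint convergence $u_{\delta_j}\to u_\delta$ and $s_{\epsilon_j} \to s_{\mathrm{mod},\epsilon}$ translates into the single strong convergence $\tilde{u}_{\delta_j,\epsilon_j} \to \tilde{u}_{\delta,\epsilon}$ in the product space. The additional claim on convergence of the functional values then follows from strong convergence of $\tilde{u}_{\delta_j,\epsilon_j}$, weak convergence of $\tilde{B}(c^j,s^j)$, and the penalty convergence provided by the cited stability result.

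For (iii), I would combine the Hofmann-type $\limsup$ argument used in the proof of Theorem \ref{thm:existence_etc_1} with a direct a~priori estimate on the $s$-component. Using $\tilde{\delta}_j^2/\alpha_j \to 0$ together with the inequality $\alpha_j(1,\nu_1,\nu_2)^t \mathcal{R}_2 \le (\alpha_j,\beta_j,\mu_j)^t \mathcal{R}_2$, which is guaranteed by $\nu_1 \le \beta_j/\alpha_j$ and $\nu_2 \le \mu_j/\alpha_j$, I obtain boundedness of $(1,\nu_1,\nu_2)^t \mathcal{R}_2(c^j,s^j)$ and hence a weakly convergent subsequence whose limit $(\tilde{c},\tilde{s})$ satisfies $\tilde{B}(\tilde{c},\tilde{s}) = (u^\ast,\sqrt{\gamma}\, s^\ast)$, i.e.\ $B(\tilde{c},\tilde{s}) = u^\ast$ together with $\tilde{s} = s^\ast$. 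Testing the minimality inequality against $(c^\ast,s^\ast)$ and keeping only the term $\tfrac{\gamma}{2}\|s^j - s_{\epsilon_j}\|_Y^2$ on the left-hand side yields $\|s^j - s_{\epsilon_j}\|_Y \to 0$, hence $s^j \to s^\ast$ strongly, and in particular $s^j \rightharpoonup s^\ast$ for the whole sequence. Because $\tilde{s}=s^\ast$ is forced, the penalty minimization in the limit reduces to minimizing $\mathcal{R}_c(c)$ over $\{c : B(c,s^\ast) = u^\ast\}$, so $\tilde{c}$ is an $\mathcal{R}_c$-minimizing solution.

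The main obstacle will be the transition in (iii) from the augmented $(1,\nu_1,\nu_2)^t\mathcal{R}_2$-minimizing characterization to the pure $\mathcal{R}_c$-minimizing one: this hinges on the strong convergence $s^j \to s^\ast$ so that the $s$-dependent penalty contributions $\nu_1 \mathcal{R}_s(s^j)$ and $\tfrac{\nu_2}{2}\|P(s^j) - s_\mathrm{calib}\|_Y^2$ converge to the constants $\nu_1 \mathcal{R}_s(s^\ast)$ and $\tfrac{\nu_2}{2}\|P(s^\ast) - s_\mathrm{calib}\|_Y^2$, and hence drop out of the effective minimization over $c$.
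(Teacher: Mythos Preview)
Your proposal is correct and matches the paper's proof almost exactly: both absorb $\tfrac{\gamma}{2}\|s-s_{\mathrm{mod},\epsilon}\|^2$ into an augmented operator $(c,s)\mapsto(B(c,s),\sqrt{\gamma}\,s)$ on the product space $Z\times Y$, verify the assumptions of \cite{Hofmann:2007us}, and for (iii) use the inequality $\alpha_j(1,\nu_1,\nu_2)^t\mathcal{R}_2\le(\alpha_j,\beta_j,\mu_j)^t\mathcal{R}_2$ to run the $\limsup$ argument of \cite[Theorem~3.5]{Hofmann:2007us}. The only difference is that your final ``obstacle'' is simpler than you suggest---the augmented constraint $\tilde{B}(c,s)=(u^\ast,\sqrt{\gamma}\,s^\ast)$ already forces $s=s^\ast$, so the $s$-dependent penalty terms are constant on the constraint set and the reduction from a $(1,\nu_1,\nu_2)^t\mathcal{R}_2$-minimizing to an $\mathcal{R}_c$-minimizing solution is immediate without invoking strong convergence of $s^j$ (that strong convergence is correct, but the paper proves it separately in the Corollary following the theorem).
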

The proof requires minor adaptations of the general approach, see Appendix \ref{app:appendix2}.  
The specific nature of the problem further allows the derivation of a stronger property of the minimizing sequence as can be seen in the following corollary.

\begin{corollary}
 In Theorem \ref{thm:existence_etc_2} (ii) $(s^j)_j$ has a strongly convergent subsequence $(s^k)_k$ and in (iii) the sequence $(s^j)_j$ converges strongly, i.e., Theorem \ref{thm:existence_etc_2}(ii) implies $s^k \to \tilde{s}$ and Theorem \ref{thm:existence_etc_2}(iii) implies $s^j \to s^*$. 
\end{corollary}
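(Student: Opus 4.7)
The key structural feature of the functional in setup A(ii) is the explicit squared-norm discrepancy $\tfrac{\gamma}{2}\|s-s_{\mathrm{mod},\epsilon}\|_Y^2$ (respectively its noisy analogue $\tfrac{\gamma}{2}\|s-s_{\epsilon_j}\|_Y^2$) with a \emph{fixed} positive weight $\gamma>0$. The plan is to use this term to transfer strong convergence from the data sequence $s_{\epsilon_j}$ (respectively from the reference $s_{\mathrm{mod},\epsilon}$) to the minimizer sequence $(s^j)$, invoking either the triangle inequality (in (iii)) or the Radon--Riesz property of the Hilbert space $Y$ (in (ii)).

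For part (iii) the argument reduces to a single comparison. Using the minimality of $(c^j,s^j)$, together with $\|u^\ast-u_{\delta_j}\|_Z\le\delta_j$ and $\|s^\ast-s_{\epsilon_j}\|_Y\le\epsilon_j$, we have
\begin{equation*}
\tfrac{\gamma}{2}\|s^j - s_{\epsilon_j}\|_Y^2 \le J^{\delta_j,\epsilon_j}_{\alpha_j,\beta_j,\mu_j}(c^j,s^j) \le J^{\delta_j,\epsilon_j}_{\alpha_j,\beta_j,\mu_j}(c^\ast, s^\ast) \le \tfrac{\delta_j^2}{2} + \tfrac{\gamma \epsilon_j^2}{2} + \alpha_j \mathcal{R}_c(c^\ast) + \beta_j \mathcal{R}_s(s^\ast) + \tfrac{\mu_j}{2}\|P(s^\ast)-s_{\mathrm{calib}}\|_Y^2 .
\end{equation*}
Under the parameter choice \eqref{eq:parameterChoice} every term on the right-hand side tends to $0$, so $\|s^j-s_{\epsilon_j}\|_Y \to 0$; together with $s_{\epsilon_j}\to s^\ast$ strongly, the triangle inequality yields $s^j\to s^\ast$ in $Y$.

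For part (ii) the noise levels are fixed, so I exploit the two limit statements from Theorem~\ref{thm:existence_etc_2}(ii). Subtracting the penalty convergence $(\alpha,\beta,\mu)^t \mathcal{R}_2(c^j,s^j)\to(\alpha,\beta,\mu)^t \mathcal{R}_2(\tilde c,\tilde s)$ from the functional convergence $J^{\delta_j,\epsilon_j}_{\alpha,\beta,\mu}(c^j,s^j)\to J^{\delta,\epsilon}_{\alpha,\beta,\mu}(\tilde c,\tilde s)$ along the fixed weakly convergent subsequence yields
\begin{equation*}
a_j + b_j \to a+b, \qquad a_j:=\tfrac12\|B(c^j,s^j)-u_{\delta_j}\|_Z^2,\quad b_j:=\tfrac{\gamma}{2}\|s^j-s_{\epsilon_j}\|_Y^2,
\end{equation*}
with $a,b$ the analogous quantities at $(\tilde c,\tilde s)$ relative to $u_\delta$ and $s_{\mathrm{mod},\epsilon}$. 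By Assumption~\ref{ass:operatorB}(ii) one has $B(c^j,s^j)\rightharpoonup B(\tilde c,\tilde s)$ and, combined with strong convergence of the data, $B(c^j,s^j)-u_{\delta_j}\rightharpoonup B(\tilde c,\tilde s)-u_\delta$ and $s^j-s_{\epsilon_j}\rightharpoonup \tilde s - s_{\mathrm{mod},\epsilon}$; weak lower semi-continuity of the squared norm then gives $\liminf a_j\ge a$ and $\liminf b_j\ge b$. Writing $b_j = (a_j+b_j)-a_j$ and using $\limsup(-a_j)=-\liminf a_j\le -a$ produces $\limsup b_j\le b$, hence $b_j\to b$ and $\|s^j-s_{\epsilon_j}\|_Y\to\|\tilde s - s_{\mathrm{mod},\epsilon}\|_Y$. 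Combined with the weak convergence of $s^j-s_{\epsilon_j}$ just established, the Radon--Riesz property of $Y$ upgrades this to $s^j-s_{\epsilon_j}\to \tilde s - s_{\mathrm{mod},\epsilon}$ in norm, and since $s_{\epsilon_j}\to s_{\mathrm{mod},\epsilon}$ strongly, finally $s^j\to\tilde s$ along the subsequence.

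The main obstacle is the splitting step in (ii): the convergence of the sum $a_j+b_j$ must be separated into convergence of the individual summands, which succeeds exactly because each summand admits a weak lower-semi-continuity bound matching its target. No assumptions beyond those already present in Theorem~\ref{thm:existence_etc_2} are required.
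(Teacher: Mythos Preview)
Your proof is correct. Part (iii) is essentially identical to the paper's argument: both use the minimality of $(c^j,s^j)$ compared against $(c^\ast,s^\ast)$ to force $\|s^j-s_{\epsilon_j}\|_Y\to 0$, then the triangle inequality.

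For part (ii) you take a slightly different route. The paper argues by contradiction: assuming $\tau_1:=\limsup_j\|s^j-s_{\mathrm{mod},\epsilon}\|>\|\tilde s-s_{\mathrm{mod},\epsilon}\|$, it passes to a further subsequence realizing $\tau_1$, transfers this to $\|s^k-s_{\epsilon_k}\|\to\tau_1$ via the triangle inequality, and then plugs $\tau_1$ into the decomposition $J^{\delta_k,\epsilon_k}(c^k,s^k)=\tfrac12\|B(c^k,s^k)-u_{\delta_k}\|^2+\tfrac{\gamma}{2}\|s^k-s_{\epsilon_k}\|^2+(\alpha,\beta,\mu)^t\mathcal R_2(c^k,s^k)$ together with the two limit statements of Theorem~\ref{thm:existence_etc_2}(ii) and weak lower semicontinuity of $\|B(\cdot)-u_\delta\|$ to produce the strict inequality $\tfrac12\|B(\tilde c,\tilde s)-u_\delta\|^2<\tfrac12\|B(\tilde c,\tilde s)-u_\delta\|^2$. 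You instead subtract the penalty convergence from the functional convergence to obtain $a_j+b_j\to a+b$, invoke the two weak lower semicontinuity bounds $\liminf a_j\ge a$, $\liminf b_j\ge b$, and deduce $b_j\to b$ directly, then apply Radon--Riesz. The ingredients are exactly the same (both limits from Theorem~\ref{thm:existence_etc_2}(ii), weak--weak continuity of $B$, weak lower semicontinuity of the norm); your direct ``splitting of a convergent sum of lower-semicontinuous terms'' avoids the extra subsequence extraction and the contradiction framing, so it is a bit more streamlined, while the paper's version makes the role of the assumption $\tau_1>\|\tilde s-s_{\mathrm{mod},\epsilon}\|$ explicit.
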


\begin{proof} We first consider the situation of Theorem \ref{thm:existence_etc_2}(ii). This implies the existence of a subsequence such that 
$(s^j-s_{\mathrm{mod},\epsilon}) \rightharpoonup (\tilde{s} - s_{\mathrm{mod},\epsilon})$.
This weakly convergent sequences also convergences in norm, if we can show  that $\|s^j-s_{\mathrm{mod},\epsilon}\| \to \|\tilde{s} - s_{\mathrm{mod},\epsilon} \|.$
The weakly lower semi-continuity of the norm further implies, that norm convergence, $s^j - s_{\mathrm{mod},\epsilon}
\to \tilde s - s_{\mathrm{mod},\epsilon}$,  is equivalent to 
\begin{equation*}
		\lVert \tilde{s} - s_{\mathrm{mod},\epsilon} \lVert \geq \limsup_j \lVert s^j 
		- s_{\mathrm{mod},\epsilon} \rVert \ .
	\end{equation*}
 We observe, that for fixed $\gamma$ the minimization property of $(c^j,s^j)$, i.e., $J_{\alpha,\beta,\mu}^{\delta_j,\epsilon_j}(c^j,s^j) \le J_{\alpha,\beta,\mu}^{\delta_j,\epsilon_j}(\tilde c, \tilde s)$,
 implies the boundedness of $\lVert s^j 
		- s_{\mathrm{mod},\epsilon} \rVert$.

 Now, assume that $s^j$  is not converging in norm to $\tilde s$, i.e., there exists  a $\tau_1$ such that
	\begin{equation}
		\tau_1 \coloneqq \limsup_j \lVert s^j -s_{\mathrm{mod},\epsilon}\rVert > \lVert\tilde{s} - s_{\mathrm{mod},\epsilon}\rVert.
	\end{equation}
	Since $\tau_1$ is a limit point of the sequence  
	$\lVert s^j -s_{\mathrm{mod},\epsilon}\rVert$
	 there exists a subsequence $(s^k)_{k\in\N}$, such that $(s^k-s_{\mathrm{mod},\epsilon}) \rightharpoonup (\tilde{s} - s_{\mathrm{mod},\epsilon})$ and $\lVert s^k - s_{\mathrm{mod},\epsilon} \rVert \to \tau_1$. Using the triangle inequality we obtain
	\begin{equation}
		\lVert s^k-s_{\mathrm{mod},\epsilon} \rVert - \lVert s_{\epsilon_k} - s_{\mathrm{mod},\epsilon} \rVert 
		\leq \lVert s^k - s_{\epsilon_k} \rVert
		\leq \lVert s^k-s_{\mathrm{mod},\epsilon} \rVert + \lVert s_{\epsilon_k} - s_{\mathrm{mod},\epsilon} \rVert
	\end{equation}
	and hence we conclude 
	\begin{equation}
		\lim_{k\to\infty} \lVert s^k - s_{\epsilon_k} \rVert 
		= \lim_{k\to\infty} \lVert s^k - s_{\mathrm{mod},\epsilon} \rVert = \tau_1.
	\end{equation}
	Furthermore, the weak-weak continuity of $B$ and $u_{\delta_k} \to u_\delta$
	imply $B(c^k,s^k)-u_{\delta_k} \rightharpoonup B(\tilde c, \tilde s) - u_\delta$.
	We now employ the lower semicontinuity of the norm as well as the convergence
	$J_{\alpha,\beta,\mu}^{\delta_j,\epsilon_j} (c^j,s^j)\rightarrow J_{\alpha,\beta,\mu}^{\delta,\epsilon}(\tilde c, \tilde s) $, see Theorem \ref{thm:existence_etc_2}(ii), and obtain
	   
	\begin{align}
	 {1 \over 2}\|B(\tilde c, \tilde s) - u_\delta\|^2 & \le \liminf{ {1 \over 2}\|B( c^k, s^k) - u_{\delta_k}\|^2} \\
	 &= \liminf \left\{ J_{\alpha,\beta,\mu}^{\delta_k,\epsilon_k} (c^k,s^k)  -{\gamma \over 2}\|s^k-s_{\epsilon_k}\|^2 - (\alpha, \beta, \mu)^t R_2(c^k,s^k)  \right\} \\
	 & = \liminf{J_{\alpha,\beta,\mu}^{\delta_k,\epsilon_k} (c^k,s^k)} -{\gamma \over 2}  \tau_1^2 - (\alpha, \beta, \mu)^t R_2(\tilde c, \tilde s) \\
	 & =  {J_{\alpha,\beta,\mu}^{\delta,\epsilon} (\tilde c,\tilde s)} -{\gamma \over 2}  \tau_1^2 - (\alpha, \beta, \mu)^t R_2(\tilde c, \tilde s)\\
	 & <  {1 \over 2}\|B(\tilde c, \tilde s) - u_\delta\|^2
	\end{align}
	which contradicts the assumption.

		We now consider case (iii) of the previous theorem.
		The pair $(c^j,s^j)$ is a minimizer of the functional $J_{\alpha_j,\beta_j,\mu_j}^{\delta_j,\epsilon_j}$, hence
		\begin{align}
			0 &\leq  J_{\alpha_j,\beta_j,\mu_j}^{\delta_j,\epsilon_j}(c^j,s^j) \leq J_{\alpha_j,\beta_j,\mu_j}^{\delta_j,\epsilon_j} (c^*,s^*) \notag \\
			&= \frac{1}{2} \lVert B(c^*,s^*) - u_{\delta_j}\rVert^2
			+ \frac{\gamma}{2} \lVert s^*-s_{\epsilon_j}\rVert^2 +\frac{\mu_j}{2} \lVert P(s^*) - s_\mathrm{calib}\rVert^2 + \alpha_j \penc(c^*) + \beta_j \pens(s^*) \notag \\
			&\leq \frac{1}{2} \left( \delta_j^2 + \gamma \epsilon_j^2 \right) +\frac{\mu_j}{2} \lVert P(s^*) - s_\mathrm{calib}\rVert^2 + \alpha_j \penc(c^*) + \beta_j \pens(s^*) \to 0, \label{eq:proofConvergenceMimisingProperty}
		\end{align}
		where the convergence of the right hand side follows from the parameter choice $\alpha_j,\beta_j,\mu_j \to 0$ and the noise levels $\delta_j,\epsilon_j \to 0$. This also implies, that
		\begin{equation*}
			\lim_{j\to\infty} \frac{1}{2} \lVert B(c^j,s^j)-u_{\delta_j} \rVert^2 + \frac{\gamma}{2} \lVert s^j - s_{\epsilon_j} \rVert^2 = 0.
		\end{equation*}
		Both terms are non-negative and $\gamma >0$ is fixed, hence, 
		\begin{equation*}
			\lim_{j\to\infty}   \lVert s^j - s_{\epsilon_j} \rVert^2 = 0.
		\end{equation*}
		Now, $\lim_{j \rightarrow \infty} \|s_{\epsilon_j}-s^*\| =0$ implies
	  $s^j \to s^*$.

\end{proof}

\begin{remark}
    Strong convergence for $c^j$ in Theorem \ref{thm:existence_etc_2} can be proven for certain choices of $\mathcal{R}_c$. We will prove this for the case of sparsity-promoting penalty terms in the next subsection. 
\end{remark}

\subsubsection*{Convergence rates for sparsity-promoting penalty terms}

We now prove    convergence rates for  sparsity-promoting penalty terms, see   \cite{daubechies2003iterative,jin2012review,Bangti2017},
\begin{equation} \label{eq:penaltySparsity}
	\Phi_p(c) = \sum_i w_i \lvert \langle c,\varphi_i \rangle \rvert^p
\end{equation}
with weights $0<w_\mathrm{min} \leq w_i<\infty$, $1\leq p \leq 2$, and orthonormal basis $\{\phi_i\}_{i\in \N}\subset X$.
In the remainder we prove two results, which use different source conditions and are applicable for certain ranges of $p$.
Without loss of generality we assume $w_\mathrm{min}=1$. 
The first result holds for  $1<p\leq 2$  and uses the following source condition.

 	\begin{assumption}\label{assumption:easySourceCondition}
Let Assumption \ref{ass:operatorB} be fulfilled. In  \eqref{eq:generalFunctional2} let $$(\alpha,\beta,\mu)^t\mathcal{R}_2(c,s)= \alpha \tilde{\mathcal{R}}(c,s):=\alpha \left(\Phi_p(c)+ \frac{\nu_2}{2} \|P(s)-s_\mathrm{calib} \|^2_Y + \nu_1 \pens(s)\right)$$ for $0<\nu_1,\nu_2<\infty$ (i.e., $\beta=\nu_2 \alpha$, $\mu=\nu_1 \alpha$). For $c^* \in X$ and $s^* \in Y$ we assume
		\begin{enumerate}
		\item[(i)] \emph{Source condition}: There exists an $\omega \in Z$ such that
		\begin{equation}
			(\xi_{c^*},\xi_{s^*}) = {B^\prime(c^*,s^*)}^\ast \omega
		\end{equation}
		with $(\xi_{c^*},\xi_{s^*}) \in \partial \tilde{\mathcal{R}}(c^*,s^*)$.
		\item[(ii)] \emph{Smallness assumption}: For $C$ from Assumption \ref{ass:operatorB} and $\gamma, \alpha$ in \eqref{eq:generalFunctional2} it holds
		\begin{equation}
			C \|\omega\| < \min \left\{ 1,\frac{\gamma}{2\alpha} \right\}.
		\end{equation}
		 		
	\end{enumerate}
	\end{assumption}

Again, the following proofs require only minor adaptations of the general theory. We start by computing the Bregman distance of the penalty $\tilde{\mathcal{R}}$ from the previous assumption after the following remark.
\begin{remark}
In the product space setting, for $(\xi_{c^\ast},\xi_{s^\ast})\in\partial \tilde{\mathcal{R}}(c^\ast,s^\ast)$ the Bregman distance is defined as
\begin{equation}
	D_{\tilde{\mathcal{R}}}^{(\xi_{c^\ast},\xi_{s^\ast})} ((c,s),(c^\ast,s^\ast))
	= \tilde{\mathcal{R}}(c,s)-\tilde{\mathcal{R}}(c^\ast,s^\ast)
	- \langle (\xi_{c^\ast},\xi_{s^\ast}), (x,y)-(c^\ast,s^\ast)\rangle \ .
\end{equation}
\end{remark}
Computing the Bregman distances term by term yields the following corollary.
\begin{corollary}
 \label{lemma:BregmanDistancePhisp}
For $1\leq p \leq 2$ the Bregman distance of $\tilde{\mathcal{R}}$ is given by
\begin{equation}\label{eq:BregmanDistancePhisp}
	D_{\tilde{\mathcal{R}}}^{(\xi_{c^\ast},\xi_{s^\ast})} ((c,s),(c^\ast,s^\ast)) = D_{\Phi_p}^{\xi_{c^\ast}}(c,c^\ast) + \nu_1 D_\pens^{\zeta_{s^\ast}} (s,s^\ast)+\frac{\nu_2}{2} \lVert P(s-s^\ast) \rVert^2
	\end{equation}	
with $\zeta_{s^\ast} \in \nu_1 \partial \pens(s^\ast)$ where $\xi_{s^\ast}= \zeta_{s^\ast} + \nu_2 P^\ast (Ps^\ast-s_\mathrm{calib})$.
\end{corollary}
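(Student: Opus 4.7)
The plan is a direct computation from the definition, after using the additive structure of $\tilde{\mathcal{R}}$ to split the subgradient $(\xi_{c^\ast},\xi_{s^\ast})$. Write
\[
\tilde{\mathcal{R}}(c,s) = \Phi_p(c) + F(s), \qquad F(s) := \nu_1 \pens(s) + \tfrac{\nu_2}{2}\|P(s)-s_\mathrm{calib}\|_Y^2,
\]
so that $\tilde{\mathcal{R}}$ separates in $(c,s)$. By the sum rule for subdifferentials of separable convex functionals, $\partial \tilde{\mathcal{R}}(c^\ast,s^\ast) = \partial \Phi_p(c^\ast) \times \partial F(s^\ast)$. Hence $\xi_{c^\ast}\in\partial \Phi_p(c^\ast)$ and $\xi_{s^\ast}\in\partial F(s^\ast)$.

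Next, I would invoke the standard sum rule on $F$. The map $s\mapsto \tfrac{\nu_2}{2}\|P(s)-s_\mathrm{calib}\|_Y^2$ is convex and Fr\'echet differentiable with gradient $\nu_2 P^\ast(P(s)-s_\mathrm{calib})$, using boundedness and linearity of $P$ (Lemma \ref{lemma:projectionPenalty}). Therefore
\[
\partial F(s^\ast) = \nu_1 \partial \pens(s^\ast) + \nu_2 P^\ast(Ps^\ast-s_\mathrm{calib}),
\]
which gives the claimed decomposition $\xi_{s^\ast}=\zeta_{s^\ast}+\nu_2 P^\ast(Ps^\ast-s_\mathrm{calib})$ with $\zeta_{s^\ast}\in\nu_1\partial\pens(s^\ast)$.

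Now I would substitute into the definition of the Bregman distance. Using the product inner product $\langle(\xi_{c^\ast},\xi_{s^\ast}),(c-c^\ast,s-s^\ast)\rangle=\langle\xi_{c^\ast},c-c^\ast\rangle_X+\langle\xi_{s^\ast},s-s^\ast\rangle_Y$ and the splitting of $\xi_{s^\ast}$, the difference $\tilde{\mathcal{R}}(c,s)-\tilde{\mathcal{R}}(c^\ast,s^\ast)-\langle(\xi_{c^\ast},\xi_{s^\ast}),(c-c^\ast,s-s^\ast)\rangle$ decomposes into three independent pieces: (a) a $\Phi_p$-part yielding $D_{\Phi_p}^{\xi_{c^\ast}}(c,c^\ast)$; (b) an $\pens$-part yielding $\nu_1 D_\pens^{\zeta_{s^\ast}}(s,s^\ast)$ (with the convention that the superscript labels the subgradient); and (c) a quadratic part from the projection penalty.

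The only small computation is in (c): setting $v:=Ps^\ast-s_\mathrm{calib}$ and $w:=P(s-s^\ast)$, the remaining terms collapse via the elementary identity
\[
\tfrac{\nu_2}{2}\|v+w\|_Y^2 - \tfrac{\nu_2}{2}\|v\|_Y^2 - \nu_2\langle P^\ast v, s-s^\ast\rangle_Y = \tfrac{\nu_2}{2}\|w\|_Y^2 = \tfrac{\nu_2}{2}\|P(s-s^\ast)\|_Y^2,
\]
using $\langle P^\ast v,s-s^\ast\rangle_Y=\langle v,w\rangle_Y$. Summing (a), (b), (c) yields \eqref{eq:BregmanDistancePhisp}. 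No step here is genuinely hard; the only point requiring a little care is the bookkeeping between $\zeta_{s^\ast}\in\nu_1\partial\pens(s^\ast)$ and the factor $\nu_1$ outside the Bregman distance, which is a notational convention.
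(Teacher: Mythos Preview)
Your argument is correct and matches the paper's approach exactly: the paper simply states that the result follows by ``computing the Bregman distances term by term,'' and your proposal spells out precisely that computation, including the subdifferential sum rule and the quadratic identity for the projection term. Your remark on the notational convention for $\zeta_{s^\ast}\in\nu_1\partial\pens(s^\ast)$ appearing as the superscript in $\nu_1 D_\pens^{\zeta_{s^\ast}}$ is also apt.
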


We thus obtain the following result for the convergence rates.

\begin{theorem}[$1<p\leq 2$] \label{theorem:sparsityConvergenceEasy}
	Let $u_\delta \in Z$ with $\| B(c^*,s^*) - u_\delta \| \leq \delta$ and $\| s^* -s_{\mathrm{mod},\epsilon} \| \leq \epsilon$. Let $1<p\leq 2$ and let $c^*$ be a $\Phi_p$-minimizing solution. Furthermore, let Assumption \ref{assumption:easySourceCondition} be fulfilled, in particular we assume that $\alpha_{max}$ is chosen  s.t. Assumption   \ref{assumption:easySourceCondition}  (ii) is fulfilled for all $\alpha$ with $0<\alpha \leq \alpha_{max}< \infty$.  For $\beta=\nu_1 \alpha$, and $\mu=\nu_2 \alpha$ the minimizer of the functional $J^{\delta,\epsilon}_{\alpha,\beta,\mu}$ as defined in~\eqref{eq:generalFunctional2} is denoted by $(c^\alpha,s^\alpha)$. 
	
	Then, with $\alpha \sim \delta + \epsilon$ we have the convergence rates
	\begin{equation}
		\|B(c^\alpha,s^\alpha)-B(c^*,s^*) \| = \mathcal{O}(\delta + \epsilon) \text{ and }
		D_{\tilde{\mathcal{R}}}^{\xi^*} \left( (c^\alpha,s^\alpha),(c^*,s^*) \right) = \mathcal{O}(\delta + \epsilon).
	\end{equation}	\end{theorem}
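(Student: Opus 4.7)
The plan is to adapt the standard Bregman-distance convergence rate argument (cf.\ Theorem 4.4 in \cite{Hofmann:2007us}) to the bilinear setting by combining Lemma~\ref{lemma:FrechetDerivative1} with the explicit decomposition of the Bregman distance from Corollary~\ref{lemma:BregmanDistancePhisp}. I start from the minimizing property $J^{\delta,\epsilon}_{\alpha,\beta,\mu}(c^\alpha, s^\alpha) \leq J^{\delta,\epsilon}_{\alpha,\beta,\mu}(c^*, s^*)$ and use $\|B(c^*,s^*) - u_\delta\| \leq \delta$, $\|s^* - s_{\mathrm{mod},\epsilon}\| \leq \epsilon$ on the right to obtain
\begin{equation*}
\tfrac{1}{2}\|B(c^\alpha, s^\alpha) - u_\delta\|^2 + \tfrac{\gamma}{2}\|s^\alpha - s_{\mathrm{mod},\epsilon}\|^2 + \alpha\bigl[\tilde{\mathcal{R}}(c^\alpha, s^\alpha) - \tilde{\mathcal{R}}(c^*, s^*)\bigr] \leq \tfrac{1}{2}(\delta^2 + \gamma\epsilon^2).
\end{equation*}
I then rewrite the penalty difference as $D_{\tilde{\mathcal{R}}}^{\xi^*}((c^\alpha, s^\alpha), (c^*, s^*)) + \langle \xi^*, (c^\alpha - c^*, s^\alpha - s^*)\rangle$ and insert the source condition $\xi^* = B'(c^*, s^*)^* \omega$ together with the bilinear Taylor identity $B'(c^*, s^*)(c^\alpha - c^*, s^\alpha - s^*) = B(c^\alpha, s^\alpha) - B(c^*, s^*) - B(c^\alpha - c^*, s^\alpha - s^*)$, which follows directly from Lemma~\ref{lemma:FrechetDerivative1}.

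The second step estimates the two resulting inner products via Cauchy--Schwarz, the triangle inequality $\|B(c^\alpha, s^\alpha) - B(c^*, s^*)\| \leq \|B(c^\alpha, s^\alpha) - u_\delta\| + \delta$, the bilinear bound $\|B(c^\alpha - c^*, s^\alpha - s^*)\| \leq C\|c^\alpha - c^*\|\|s^\alpha - s^*\|$, and the splitting $\|s^\alpha - s^*\| \leq \|s^\alpha - s_{\mathrm{mod},\epsilon}\| + \epsilon$. Applying Young's inequality, the discrepancy contribution produces a term $\tfrac{1}{4}\|B(c^\alpha, s^\alpha) - u_\delta\|^2$ which is absorbed on the left, while the bilinear remainder yields a term proportional to $\|s^\alpha - s_{\mathrm{mod},\epsilon}\|^2$ that is absorbed into $\tfrac{\gamma}{2}\|s^\alpha - s_{\mathrm{mod},\epsilon}\|^2$ thanks to $\alpha C \|\omega\| < \gamma/2$ from Assumption~\ref{assumption:easySourceCondition}(ii). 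The remaining $\|c^\alpha - c^*\|^2$ residual is absorbed into $\alpha D_{\tilde{\mathcal{R}}}^{\xi^*}$ via Corollary~\ref{lemma:BregmanDistancePhisp}: for $p = 2$ the weights $w_i \geq 1$ yield $D_{\Phi_p}^{\xi_{c^*}}(c^\alpha, c^*) \geq \|c^\alpha - c^*\|^2$ directly, while for $1 < p < 2$ the minimization property first provides a uniform bound on $\|c^\alpha\|$ and local strong convexity of $\Phi_p$ on bounded sets then yields a comparable inequality; the remaining dual smallness $C\|\omega\| < 1$ leaves enough room to complete the absorption.

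After all absorptions, the right-hand side collects terms of order $\delta^2$, $\epsilon^2$, $\alpha^2$, $\alpha\delta$, and $\alpha\epsilon^2$. Dividing by $\alpha$ and using the parameter choice $\alpha \sim \delta + \epsilon$ yields $D_{\tilde{\mathcal{R}}}^{\xi^*}((c^\alpha, s^\alpha), (c^*, s^*)) = \mathcal{O}(\delta + \epsilon)$, and the bound $\|B(c^\alpha, s^\alpha) - u_\delta\| = \mathcal{O}(\delta + \epsilon)$ combined with the triangle inequality gives the desired discrepancy rate $\|B(c^\alpha, s^\alpha) - B(c^*, s^*)\| = \mathcal{O}(\delta + \epsilon)$.

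The main obstacle is the cross term $C\|\omega\|\|c^\alpha - c^*\|\|s^\alpha - s^*\|$ coming from the bilinear Taylor remainder: it couples both unknowns and must be split into an $s$-factor absorbed into the discrepancy and a $c$-factor absorbed into the Bregman distance. The dual smallness $C\|\omega\| < \min\{1, \gamma/(2\alpha)\}$ in Assumption~\ref{assumption:easySourceCondition}(ii) is tailored exactly to make both absorptions possible simultaneously, and for $1 < p < 2$ the additional boundedness-plus-local-strong-convexity step is needed to relate the sparsity Bregman distance back to $\|c^\alpha - c^*\|^2$.
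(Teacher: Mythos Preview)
Your proposal is correct and follows the same Bregman-distance/source-condition argument as the paper. Two execution differences are worth flagging. First, the paper passes via the parallelogram law from the data residuals to $\tfrac14\|B(c^\alpha,s^\alpha)-B(c^*,s^*)\|^2$ and $\tfrac{\gamma}{4}\|s^\alpha-s^*\|^2$ before inserting the source condition, and then solves the resulting quadratic inequality in $\|B(c^\alpha,s^\alpha)-B(c^*,s^*)\|$; you instead keep $\|B(c^\alpha,s^\alpha)-u_\delta\|$ and $\|s^\alpha-s_{\mathrm{mod},\epsilon}\|$ on the left and absorb the linear term by Young's inequality---an equivalent bookkeeping choice. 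Second, and more substantively, for the absorption of $\|c^\alpha-c^*\|^2$ into the Bregman distance the paper invokes in one line the global 2-convexity inequality $\|c^\alpha-c^*\|^2\le D_{\ell^p}^{\xi_{c^*}}(c^\alpha,c^*)$ for $\ell^p$ with $1<p\le 2$, whereas you go through an a priori bound on $\|c^\alpha\|$ and local strong convexity of $\Phi_p$ on bounded sets. Your route is legitimate, but the local strong-convexity constant you obtain scales like $M^{p-2}$ with the a priori bound $M$ and is not $\ge 1$ in general, so the smallness $C\|\omega\|<1$ from Assumption~\ref{assumption:easySourceCondition}(ii) does not literally close your absorption for $1<p<2$; either invoke the 2-convexity bound as the paper does, or note that the smallness constant in the assumption must be adapted accordingly.
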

	
	\begin{proof}
	We follow the general outline of \cite{jin2012review} for proving convergence rates for sparsity constrained Tikhonov functionals.
	First,   from the minimizing property of $(c^\alpha,s^\alpha)$ we have
	\begin{equation*}
		J^{\delta,\epsilon}_{\alpha,\nu_1 \alpha,\nu_2 \alpha} (c^\alpha,s^\alpha) \leq J^{\delta,\epsilon}_{\alpha,\nu_1 \alpha,\nu_2 \alpha}(c^*,s^*)
		\leq \frac{\delta^2 + \gamma \epsilon^2 }{2} + \alpha \tilde{\mathcal{R}}(c^*,s^*).
	\end{equation*}

The definition of the Bregman distance of $\tilde{\mathcal{R}}$ with  subgradient
$\xi^*=(\xi_{c^*},\xi_{s^*}) \in \partial \tilde{\mathcal{R}} (c^*,s^*)$ allows to rewrite the
above equation as
\begin{align}
	\frac{1}{2} \|B(c^\alpha,&s^\alpha)-u_\delta \|^2 + \frac{\gamma}{2} \|s^\alpha-s_{\mathrm{mod},\epsilon}\|^2  \notag \\ \label{eq:ConvergenceRatesInequalityBregmanDistanceSp}
			&\leq \frac{\delta^2 + \gamma \epsilon^2 }{2} - \alpha \left(
			D_{\tilde{\mathcal{R}}}^{\xi^\ast} ((c^\alpha,s^\alpha),(c^\ast,s^\ast))
			+ \langle (\xi_{c^\ast},\xi_{s^\ast}),(c^\alpha,s^\alpha)-(c^\ast,s^\ast) \rangle \right).
\end{align}

Second, from the parallelogram law we obtain  
\begin{equation*}
	\frac{\gamma}{2} \|s^\alpha - s^* \|^2 \leq \gamma \|s^\alpha - s_{\mathrm{mod},\epsilon} \|^2 + \gamma \epsilon^2
\end{equation*}
\begin{equation*}
	\frac{1}{2} \| B(c^\alpha,s^\alpha)-B(c^*,s^*)\|^2
	\leq \|B(c^\alpha,s^\alpha)-u_\delta \|^2 + \delta^2
\end{equation*}
which yields in combination 
with~\eqref{eq:ConvergenceRatesInequalityBregmanDistanceSp}  
the following estimate
\begin{align}
	\frac{1}{4} \| B(c^\alpha,&s^\alpha)-B(c^*,s^*)\|^2 + \frac{\gamma}{4} \|s^\alpha - s^* \|^2 \notag \\
		&\leq \delta^2 + \gamma \epsilon^2  - \alpha  \left(
			D_{\tilde{\mathcal{R}}}^{\xi^*} ((c^\alpha,s^\alpha),(c^*,s^*))
			+ \langle (\xi_{c^*},\xi_{s^*}),(c^\alpha,s^\alpha)-(c^*,s^*) \rangle \right). \label{eqinproof:ConvergenceRatesBeforeScalarProductSp}
\end{align}

	Third, we exploit the source condition.
	We define $r\coloneqq B(c^\alpha,s^\alpha)-B(c^*,s^*)-B^\prime(c^*,s^*)((c^\alpha,s^\alpha)-(c^*,s^*))$ as the residual of the first degree Taylor expansion.
	$B$ is a bilinear operator, hence, with $C$ as in Assumption 3.1 we have
	$\|r\|\le {1 \over 2} C \|(c^*,s^*)-(c^\alpha,s^\alpha)\|^2$.
	Then, using the source condition
	 (Assumption~\ref{assumption:easySourceCondition} (i))   we can estimate the last term of the above inequality as
	\begin{align*}
		- \langle (\xi_{c^*},\xi_{s^*}),(c^\alpha,s^\alpha)-&(c^*,s^*) \rangle \\
		&= -\langle {B^\prime(c^*,s^*)}^\ast\omega ,(c^\alpha,s^\alpha)-(c^*,s^*)\rangle	\\
		&= - \left\langle \omega ,B^\prime(c^*,s^*)\left((c^\alpha,s^\alpha)-(c^*,s^*)\right)\right\rangle \\
		&= \langle \omega, B(c^*,s^*)-B(c^\alpha,s^\alpha)+r \rangle \\
		&\leq \|\omega \| \|B(c^\alpha,s^\alpha)-B(c^*,s^*)\| + \frac{C}{2}
		\|\omega \| \| (c^\alpha,s^\alpha)-(c^*,s^*)\|^2 \\
		&\leq \|\omega \| \|B(c^\alpha,s^\alpha)-B(c^*,s^*)\| + \frac{C}{2}
		\|\omega \| \| c^\alpha-c^*\|^2 +\frac{C}{2}
		\|\omega \| \| s^\alpha-s^*\|^2 \\
		&\leq \|\omega \| \|B(c^\alpha,s^\alpha)-B(c^*,s^*)\| + \frac{C}{2}
		\|\omega \| \underset{\leq D_{\Phi_p}^{\xi_{c^*}}(c^\alpha,c^*)}{\underbrace{D_{\ell^p}^{\xi_{c^*}}(c^\alpha,c^*)}} +\frac{C}{2}
		\|\omega \| \| s^\alpha-s^*\|^2.
	\end{align*}
	The estimation in the last step follows from the 2-convexity of the $\ell^p$-spaces for $1<p\leq 2$.

Thus inequality~\eqref{eqinproof:ConvergenceRatesBeforeScalarProductSp} becomes
\begin{align*}
	\frac{1}{4} \| B(c^\alpha,&s^\alpha)-B(c^*,s^*)\|^2 + \alpha D_{\tilde{\mathcal{R}}}^{\xi^*} ((c^\alpha,s^\alpha),(c^*,s^*)) \notag \\
	&\leq \delta^2 + \gamma \epsilon^2 -  \frac{\gamma}{4} \|s^\alpha - s^* \|^2 + \alpha \|\omega \| \|B(c^\alpha,s^\alpha)-B(c^*,s^*)\|\\
	 &+ \frac{C}{2}
		\alpha\|\omega \| D_{\Phi_p}^{\xi_{c^*}}(c^\alpha,c^*) +\frac{C}{2}
		\alpha\|\omega \| \| s^\alpha-s^*\|^2\\ 
					&= \delta^2 + \gamma \epsilon^2   + \alpha
			\|\omega \| \|B(c^\alpha,s^\alpha)-B(c^*,s^*)\| \\
			&+  \frac{1}{2} (\alpha C \|\omega \|- \frac{\gamma}{2}) \|s^\alpha - s^* \|^2+ \alpha \frac{C}{2}	\|\omega \| D_{\Phi_p}^{\xi_{c^*}}(c^\alpha,c^*).
\end{align*}
By using the Bregman distance~\eqref{eq:BregmanDistancePhisp} we obtain
\begin{align}
	\frac{1}{4} \| B(c^\alpha,&s^\alpha)-B(c^*,s^*)\|^2 +  \alpha (1-\frac{C}{2}	\|\omega \|) D_{\Phi_p}^{\xi_{c^*}}(c^\alpha,c^*) + \alpha \nu_1 D_\pens^{\zeta_{s^\ast}} (s^\alpha,s^*) + \alpha \frac{\nu_2}{2} \|P(s^\alpha-s^*)\|^2 \notag \\
					&\leq \delta^2 + \gamma \epsilon^2   + \alpha
			\|\omega \| \|B(c^\alpha,s^\alpha)-B(c^*,s^*)\| \notag \\
			&+  \frac{1}{2} \left(\alpha C \|\omega \| - \frac{\gamma}{2}\right) \|s^\alpha - s^* \|^2 \notag\\
			&\leq \delta^2 + \gamma \epsilon^2   + \alpha
			\|\omega \| \|B(c^\alpha,s^\alpha)-B(c^*,s^*)\|,
			\label{eq:ConvergenceRatesInequalityBregmanDistanceRSp}
\end{align}
where the last step follows from the smallness assumption (Assumption~\ref{assumption:easySourceCondition} (ii)).
This smallness assumption also tells us, that $(1-C	\|\omega \|)$ is positive. As the Bregman distance is non-negative, we can deduce
\begin{equation*}
	\frac{1}{4} \| B(c^\alpha,s^\alpha)-B(c^*,s^*)\|^2
		    - \alpha \|\omega \| \|B(c^\alpha,s^\alpha)-B(c^*,s^*)\|
		-(\delta^2 + \gamma \epsilon^2) \leq 0.
\end{equation*}
	This is a quadratic equation with a non-negative argument, hence,:	
	\begin{equation} \label{eq:ConvergenceRatesRoot}
		\| B(c^\alpha,s^\alpha)-B(c^*,s^*)\| \leq 2 \alpha \|\omega \|
		+ 2 \sqrt{\alpha^2 \|\omega\|^2 + \delta^2 + \gamma \epsilon^2}.
	\end{equation}

In the fourth step we use the parameter choice rule $\alpha\sim \delta + \epsilon$, i.e., $\alpha \le M(\delta + \epsilon)$ with some constant $M>0$, such that we can conclude
	\begin{align*}
		\| B(c^\alpha,s^\alpha)-B(c^*,s^*)\| &\leq 2 M(\delta + \epsilon) \|\omega \|
		+ 2  \sqrt{M^2(\delta + \epsilon)^2 \|\omega\|^2 + \delta^2 + \gamma \epsilon^2} \\ %chktex 3
			&\leq (\delta + \epsilon)\left( 2 M \|\omega \| +  2 \sqrt{2 M^2 \|\omega\|^2 + \max(1,\gamma)}\right)
	\end{align*}
and thus $\| B(c^\alpha,s^\alpha)-B(c^*,s^*)\|= \mathcal{O}(\delta+\epsilon)$.

	Finally, in the last step, in order to get the convergence rate for the Bregman distance we first need to estimate the single terms  on the left hand side of~\eqref{eq:ConvergenceRatesInequalityBregmanDistanceRSp}, which are all positive. Using~\eqref{eq:ConvergenceRatesRoot} we derive
	\begin{equation*}
		D_{\ell^p}^{\xi_{c^*}}(c^\alpha,c^*)\leq D_{\Phi_p}^{\xi_{c^*}}(c^\alpha,c^*)
				\leq  \frac{1}{(1-\frac{C}{2}	\|\omega \|)} \left( \frac{\delta^2 + \gamma \epsilon^2}{\alpha}   + 2 \alpha
				\|\omega \|^2 + 2\|\omega\| \sqrt{\alpha^2 \|\omega\|^2 + \delta^2 + \gamma \epsilon^2} \right)
	\end{equation*}
	and
	\begin{equation*}
	 \nu_1 D_\pens^{\zeta_{s^\ast}} (s^\alpha,s^\ast)
				\leq \frac{\delta^2 + \gamma \epsilon^2}{ \alpha}   + 2\alpha\|\omega\|^2+ 2 \|\omega\|\sqrt{\alpha^2 \|\omega\|^2 + \delta^2 + \gamma \epsilon^2}
	\end{equation*}
	and
	\begin{equation*}
		\nu_2 \|P (s^\alpha -s^\ast)\|^2
		\leq \frac{\delta^2 + \gamma \epsilon^2}{ \alpha} + 2\alpha \| \omega\|^2 + 2 \|\omega \| \sqrt{\alpha^2 \|\omega\|^2 + \delta^2 + \gamma \epsilon^2}.
	\end{equation*}
	By using the parameter choice rule $\alpha \sim \delta + \epsilon$ and estimating both, $\delta$ and $\epsilon$, with $(\delta+\epsilon)$ again
	we get that all of the three terms above are in the order $\delta+\epsilon$. Hence, in total we have the convergence rate
	\begin{equation*}
		D_{\tilde{\mathcal{R}}}^{\xi^*} \left( (c^\alpha,s^\alpha),(c^*,s^*)\right)
		= \mathcal{O}(\delta + \epsilon).
	\end{equation*}
\end{proof}
For $p=2$ this also yields the convergence rates for a quadratic penalty term.
 The following corollary shows the convergence in the Hilbert space norm.
\begin{corollary}\label{corollary:NormConvergenceSparsity}
	From Theorem~\ref{theorem:sparsityConvergenceEasy} directly follows a convergence rate of $\lVert c^\alpha - c^\ast \rVert = \mathcal{O}(\sqrt{\delta+\epsilon})$.
\end{corollary}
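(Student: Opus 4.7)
The plan is to extract a norm estimate from the Bregman-distance bound established in Theorem~\ref{theorem:sparsityConvergenceEasy} by using the same 2-convexity property of the $\ell^p$-spaces that was already invoked in that proof. Since the whole chain has already been assembled in the penultimate step of the theorem, the corollary is essentially a matter of reading off the component of the Bregman distance that controls $c^\alpha-c^\ast$.

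First, I would isolate the image-part of the Bregman distance: from the decomposition in Corollary~\ref{lemma:BregmanDistancePhisp},
\begin{equation*}
D_{\Phi_p}^{\xi_{c^\ast}}(c^\alpha,c^\ast) \;\leq\; D_{\tilde{\mathcal{R}}}^{\xi^\ast}\!\bigl((c^\alpha,s^\alpha),(c^\ast,s^\ast)\bigr),
\end{equation*}
because the other summands $\nu_1 D_{\pens}^{\zeta_{s^\ast}}(s^\alpha,s^\ast)$ and $\tfrac{\nu_2}{2}\|P(s^\alpha-s^\ast)\|^2$ are non-negative. Combined with the rate $D_{\tilde{\mathcal{R}}}^{\xi^\ast}((c^\alpha,s^\alpha),(c^\ast,s^\ast)) = \mathcal{O}(\delta+\epsilon)$ from Theorem~\ref{theorem:sparsityConvergenceEasy}, this gives $D_{\Phi_p}^{\xi_{c^\ast}}(c^\alpha,c^\ast) = \mathcal{O}(\delta+\epsilon)$.

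Second, I would exploit the fact that $\Phi_p$ dominates the unweighted $\ell^p$-functional (since $w_{\min}=1$), so $D_{\ell^p}^{\xi_{c^\ast}}(c^\alpha,c^\ast) \leq D_{\Phi_p}^{\xi_{c^\ast}}(c^\alpha,c^\ast)$. Then the 2-convexity of $\ell^p$ for $1<p\leq 2$ — the very property used in the last inequality of the Bregman estimate in the proof of Theorem~\ref{theorem:sparsityConvergenceEasy} — yields a constant $\kappa_p>0$ such that
\begin{equation*}
\kappa_p\,\|c^\alpha-c^\ast\|^2 \;\leq\; D_{\ell^p}^{\xi_{c^\ast}}(c^\alpha,c^\ast).
\end{equation*}
Chaining the three inequalities gives $\|c^\alpha-c^\ast\|^2 = \mathcal{O}(\delta+\epsilon)$; taking the square root produces the claimed rate $\|c^\alpha-c^\ast\| = \mathcal{O}(\sqrt{\delta+\epsilon})$.

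There is no real obstacle here — the whole content of the corollary is already sitting inside the theorem's proof, and the only non-trivial ingredient is the 2-convexity inequality for $\ell^p$, which has been cited once already. One might optionally add one sentence remarking that the same argument works for $p=2$ with $\kappa_p=w_{\min}=1$, in which case the inequality is an equality and the rate comes without appealing to any deeper convexity property.
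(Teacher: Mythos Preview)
Your argument is correct and mirrors the paper's own proof: both isolate $D_{\Phi_p}^{\xi_{c^\ast}}$ from the non-negative summands of $D_{\tilde{\mathcal{R}}}^{\xi^\ast}$, then pass to $D_{\ell^p}$ via $w_{\min}=1$ and invoke the 2-convexity of $\ell^p$ for $1<p\leq 2$ to bound the squared norm. The only cosmetic difference is that the paper writes out the subgradient of $\Phi_p$ explicitly before factoring out $w_{\min}$, whereas you cite the inequality directly.
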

\begin{proof}
	Since $D^{\xi^\ast}_{\tilde{\mathcal{R}}}$  is the sum of non-negative Bregman distances including $D_{\Phi_p}^{\xi_{c^\ast}}$, we have
	\begin{equation}
		D_{\Phi_p}^{\xi_{c^\ast}} (c^\alpha,c^\ast) = \mathcal{O}(\delta + \epsilon).
	\end{equation}
We continue to estimate the Bregman distance by the Hilbert space norm using the definition of the subgradient for the sparsity term and the 2-convexity of the $\ell^p$-spaces for $1<p\leq 2$ %\TKcomment{[REF TBD]}
\begin{align*}
	D_{\Phi_p}(c^\alpha,c^\ast) &= \sum_i w_i \lvert \langle \varphi_i,c^\alpha \rvert^p - \sum_i w_i \lvert \langle \varphi_i,c^\ast \rangle \rvert^p- \left\langle \sum_i w_i \lvert \langle \varphi_i,c^\ast \rangle \rvert^{p-1} \sign (\langle \varphi_i,c^\ast\rangle) \varphi_i,c^\alpha-c^\ast \right\rangle \\
	&\geq w_\textnormal{min} \sum_i \left( \lvert \langle \varphi_i,c^\alpha \rvert^p - \lvert \langle \varphi_i,c^\ast \rangle \rvert^p  -  \left\langle \lvert \langle \varphi_i,c^\ast \rangle \rvert^{p-1} \sign (\langle \varphi_i,c^\ast\rangle) \varphi_i,c^\alpha-c^\ast \right\rangle \right) \\
	&= w_\textnormal{min} D_{\ell^p}^{\xi_{c^\ast}}(c^\alpha,c^\ast) \\
	&\geq C \lVert c^\alpha -c^\ast \rVert^2.
\end{align*}
Thus, we get
\begin{equation*}
	\lVert c^\alpha - c^\ast \rVert^2 \leq \frac{1}{C} D_{\Phi_p}^{\xi_{c^\ast}}(c^\alpha,c^\ast),
\end{equation*}
where  $C$ is a positive constant.
\end{proof}

\begin{remark}
	Assumption~\ref{assumption:easySourceCondition} does not yield convergence rates for $p=1$ because 
	$\ell^1$ is not 2-convex.
	\end{remark}

In order to obtain a convergence result for $p=1$ we consider an alternative source condition, which is a generalization of the previous source condition in certain cases.   
	
\begin{assumption}\label{assumption:sparseApproximation}
Let Assumption \ref{ass:operatorB} be fulfilled. In  \eqref{eq:generalFunctional2} let $$(\alpha,\beta,\mu)^t\mathcal{R}_2(c,s)= \alpha \tilde{\mathcal{R}}(c,s):=\alpha \left(\Phi_p(c)+ \frac{\nu_2}{2} \|P(s)-s_\mathrm{calib} \|^2_Y + \nu_1 \pens(s)\right)$$ for $0<\nu_1,\nu_2<\infty$ (i.e., $\beta=\nu_2 \alpha$, $\mu=\nu_1 \alpha$). 
%For $c^* \in X$ and $s^* \in Y$ we assume

%Let Assumption \ref{ass:operatorB} be fulfilled. In  \eqref{eq:generalFunctional2} let $\mathcal{R}_2((c,s))=\tilde{\mathcal{R}}((c,s)):=\Phi_p(c)+ \frac{\nu_2}{2} \|P(s)-s_\mathrm{calib} \|^2_Y + \nu_1 \pens(s)$ for $0<\nu_1,\nu_2<\infty$ (i.e., $\beta=\nu_2 \alpha$, $\mu=\nu_1 \alpha$). 
We further assume for an $0 <\alpha_\mathrm{max}<\infty$ and $\gamma$ from \eqref{eq:generalFunctional2}
	\begin{enumerateAssumption}
	\item There exists an $\Phi_p$-minimizing solution $c^\ast\in X$ for $s^\ast \in Y$ and $u^\ast \in Z$.
	\item
	There exist constants $\kappa_1 \in [0,1), \kappa_2,\kappa_3	\geq 0$, $\kappa_3<\min (1, \frac{\gamma}{2 \alpha_\mathrm{max}})$, and a subgradient $\xi^\ast:=(\xi_{c^\ast},\xi_{s^\ast})\in \partial \tilde{\mathcal{R}}(c^\ast,s^\ast)$, such that %chktex 9
	\begin{equation}\label{eq:source_cond2}
		\langle (\xi_{c^\ast},\xi_{s^\ast}), (c^\ast -c,s^\ast-s)\rangle \leq \kappa_1 D_{\tilde{\mathcal{R}}}^{\xi^\ast} ((c,s),(c^\ast,s^\ast)) + \kappa_2 \lVert B(c,s)-B(c^\ast,s^\ast) \rVert + \kappa_3 \lVert s-s^\ast\rVert^2
	\end{equation}
	for all $(c,s)\in X \times Y$. %chktex 35
	\end{enumerateAssumption} 
\end{assumption}
\begin{remark}
A similar type of condition was first introduced for more general penalty terms in \cite{Hofmann:2007us} and particularly for sparsity regularization in \cite{Grasmair:2008cy}. The general source condition in \cite[Ass. 4.1]{Hofmann:2007us} can also be applied to the product space setting defined in the proof of Theorem \ref{thm:existence_etc_2} resulting in the same convergence rate. In line with assumption \cite[Ass. 4.1]{Hofmann:2007us} restricting \eqref{eq:source_cond2} to hold true for $(c,s) \in \mathcal{M}_{\alpha_\mathrm{max}}^{\delta,\epsilon}(M):=\{ (c,s) \in X\times Y | J^{\delta,\epsilon}_{\alpha_\mathrm{max},\nu_1 \alpha_\mathrm{max},\nu_2 \alpha_\mathrm{max}}(c,s)\leq M \}$ where $M > \alpha_\mathrm{max} \left( \tilde{\mathcal{R}} (c^\ast,s^\ast) + \frac{\delta^2+ \gamma \epsilon^2}{\alpha} \right)$ for given $(\delta,\epsilon,\alpha)$ tuples (e.g., those used in Theorem \ref{theorem:ConvergenceRatesSparsityFull}) may allow for the following statement. If $\mathcal{M}_{\alpha_\mathrm{max}}^{\delta,\epsilon}(M)\subset X \times B_1(s^\ast)$ holds true, we could then obtain \cite[Ass. 4.1(5)]{Hofmann:2007us} from \eqref{eq:source_cond2} exploiting $\| s - s^\ast\| \leq 1$ and the equivalence of p-norms in the product space norm of $X \times Y$. In this particular case \cite[Ass. 4.1]{Hofmann:2007us} would be a generalization of the used source condition in Assumption \ref{assumption:sparseApproximation} equipped with the previously described restriction. However, proving this relation is beyond the scope of the present work.
\end{remark}

For $1<p\leq 2$ the condition in Assumption \ref{assumption:sparseApproximation} is a generalisation of the previous source condition in Assumption \ref{assumption:easySourceCondition} as the next proposition will show.
\begin{proposition}
Let Assumption~\ref{assumption:easySourceCondition} be fulfilled and let $1<p\leq 2$. Then there exist $\kappa_1 \in [0,1)$, $\kappa_2 \geq 0$ and $\kappa_3 < \min (1,\frac{\gamma}{2\alpha_\mathrm {max}})$ such that %chktex 9
\begin{equation}
	\langle (\xi_{c^\ast},\xi_{s^\ast}), (c^\ast -c,s^\ast-s)\rangle \leq \kappa_1 D_{\tilde{\mathcal{R}}}^{\xi^\ast} ((c,s),(c^\ast,s^\ast)) + \kappa_2 \lVert B(c,s)-B(c^\ast,s^\ast) \rVert + \kappa_3 \lVert s-s^\ast \rVert^2. 
	\end{equation}
	\end{proposition}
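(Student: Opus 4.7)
The plan is to derive the variational-type source condition \eqref{eq:source_cond2} by reusing exactly the chain of estimates that appears in the proof of Theorem \ref{theorem:sparsityConvergenceEasy}, but now as a standalone inequality valid for all $(c,s)\in X\times Y$ rather than as part of a minimizer estimate. The starting point is to plug the representation $(\xi_{c^*},\xi_{s^*})=B'(c^*,s^*)^*\omega$ from Assumption \ref{assumption:easySourceCondition}(i) into the left-hand side, yielding
\begin{equation*}
\langle (\xi_{c^*},\xi_{s^*}),(c^*-c,s^*-s)\rangle = \langle \omega,\, B'(c^*,s^*)(c^*-c,s^*-s)\rangle.
\end{equation*}

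Next, I would invoke the Taylor-type identity for bilinear operators from Lemma \ref{lemma:FrechetDerivative1}: writing $r := B(c,s)-B(c^*,s^*)-B'(c^*,s^*)(c-c^*,s-s^*)$, we have $\|r\|\le\tfrac{C}{2}\|(c-c^*,s-s^*)\|_{X\times Y}^2 = \tfrac{C}{2}(\|c-c^*\|^2+\|s-s^*\|^2)$, so that $B'(c^*,s^*)(c^*-c,s^*-s) = B(c^*,s^*)-B(c,s)+r$. Cauchy--Schwarz then gives
\begin{equation*}
\langle (\xi_{c^*},\xi_{s^*}),(c^*-c,s^*-s)\rangle \le \|\omega\|\,\|B(c,s)-B(c^*,s^*)\| + \tfrac{C}{2}\|\omega\|\,\|c-c^*\|^2 + \tfrac{C}{2}\|\omega\|\,\|s-s^*\|^2,
\end{equation*}
which immediately identifies $\kappa_2 = \|\omega\|$ and $\kappa_3 = \tfrac{C\|\omega\|}{2}$. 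The $\|s-s^*\|^2$-term stays as is (this is precisely why the condition in Assumption \ref{assumption:sparseApproximation} allows the extra $\kappa_3\|s-s^*\|^2$ term, absent in the classical framework of \cite{Hofmann:2007us}), and the smallness assumption $C\|\omega\|<\min(1,\gamma/(2\alpha_{\max}))$ yields $\kappa_3<\tfrac{1}{2}\min(1,\gamma/(2\alpha_{\max}))<\min(1,\gamma/(2\alpha_{\max}))$ as required.

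It remains to convert the $\|c-c^*\|^2$ term into a multiple of the full Bregman distance. Here I would use the 2-convexity of the $\ell^p$-spaces for $1<p\le 2$, which is the same estimate that underlies Corollary \ref{corollary:NormConvergenceSparsity}: there exists a constant $C_p>0$ with $\|c-c^*\|^2\le C_p D_{\ell^p}^{\xi_{c^*}}(c,c^*) \le C_p D_{\Phi_p}^{\xi_{c^*}}(c,c^*)$, and Corollary \ref{lemma:BregmanDistancePhisp} together with non-negativity of the remaining Bregman terms gives $D_{\Phi_p}^{\xi_{c^*}}(c,c^*)\le D_{\tilde{\mathcal{R}}}^{\xi^*}((c,s),(c^*,s^*))$. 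This produces $\kappa_1 = \tfrac{C C_p\|\omega\|}{2}$. The main obstacle is purely a constant-tracking one: Assumption \ref{assumption:easySourceCondition}(ii) only ensures $C\|\omega\|<1$, so the verification $\kappa_1<1$ requires $C C_p \|\omega\|<2$, i.e.\ the 2-convexity constant $C_p$ must be accounted for. Assuming the smallness condition is read as holding with respect to the effective constant $C\cdot \max(1,C_p/2)$ — or equivalently that $\|\omega\|$ is chosen sufficiently small, which is always achievable by slightly shrinking $\omega$ since $B'(c^*,s^*)^*$ is linear — we obtain $\kappa_1\in[0,1)$, completing the verification of all three parameter ranges and establishing the proposition.
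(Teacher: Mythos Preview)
Your approach is essentially identical to the paper's: apply the source condition to pass to $\omega$, use the Taylor remainder bound \eqref{eq:BTaylorRemainder} for the bilinear operator, split the product-space norm into the $c$- and $s$-parts, and absorb $\|c-c^*\|^2$ into $D_{\Phi_p}^{\xi_{c^*}}\le D_{\tilde{\mathcal{R}}}^{\xi^*}$ via 2-convexity while keeping $\|s-s^*\|^2$ as the $\kappa_3$-term. The paper simply takes the 2-convexity constant to be $1$ and reads off $\kappa_1=\tfrac{C}{2}\|\omega\|$, $\kappa_2=\|\omega\|$, $\kappa_3=C\|\omega\|$.

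One point to correct: your fallback claim that $\omega$ can be ``slightly shrunk'' because $B'(c^*,s^*)^*$ is linear is false. The element $\omega$ is determined (up to the kernel of $B'(c^*,s^*)^*$) by the source condition $(\xi_{c^*},\xi_{s^*})=B'(c^*,s^*)^*\omega$; replacing $\omega$ by $t\omega$ with $t<1$ gives $t(\xi_{c^*},\xi_{s^*})$, which is no longer the required subgradient. Your first alternative---reading the smallness assumption with the effective constant incorporating $C_p$---is the legitimate way to close the argument, and is in effect what the paper does (tacitly with $C_p=1$).
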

\begin{proof}
We start by using the source condition (i) of Assumption~\ref{assumption:easySourceCondition} and the Cauchy-Schwartz inequality to estimate the left hand side of the assertion.
\begin{align*}
	\langle &(\xi_{c^\ast},\xi_{s^\ast}),(c^\ast-c,s^\ast-s) \rangle \\
	&= \langle \omega, B^\prime(c^\ast,s^\ast)(c^\ast-c,s^\ast-s) \rangle \\
	&\leq \lVert \omega \rVert \lVert B(c,s)-B(c^\ast,s^\ast) - B(c,s) + B(c^\ast,s^\ast)+ B^\prime (c^\ast,s^\ast)(c^\ast-c,s^\ast-s) \rVert \\
	&\leq \lVert \omega \rVert \lVert B(c,s)-B(c^\ast,s^\ast) - B^\prime(c^\ast,s^\ast) (c^\ast-c,s^\ast-s) \rVert + \lVert \omega \rVert \lVert B(c,s) - B(c^\ast,s^\ast) \rVert. 
\end{align*} 
Defining $\kappa_2=\lVert \omega \rVert$, we only need to estimate the first term now. By using \eqref{eq:BTaylorRemainder} and the 2-convexity of the $\ell^p$-spaces for $1<p\leq 2$ we obtain
\begin{align*}
	\lVert \omega \rVert &\lVert B(c,s)-B(c^\ast,s^\ast) - B^\prime(c^\ast,s^\ast) (c^\ast-c,s^\ast-s) \rVert \\
	&\leq \lVert \omega \rVert \frac{C}{2} \lVert (c^\ast-c,s^\ast-s) \rVert^2 \\
	&= \lVert \omega \rVert \frac{C}{2} \left( \lVert c^\ast-c\rVert^2 + \lVert s^\ast-s\rVert^2 \right) \\
	&\leq \lVert \omega \rVert \frac{C}{2} \left( D_{\ell^p}^{\xi_{c^\ast}} (c^\ast,c) + \lVert s^\ast-s\rVert^2 \right) \\
	&\leq \underbrace{\omega \frac{C}{2}}_{\eqqcolon \kappa_1} D_{\tilde{\mathcal{R}}}^{\xi^\ast} ((c^\ast,s^\ast),(c,s)) + \frac{1}{2}  \underbrace{\lVert \omega \rVert C}_{\eqqcolon \kappa_3} \lVert s^\ast-s \rVert^2,
\end{align*}
where the last estimate follows from $D_{\Phi_p}^{\xi_{c^\ast}}\geq D_{\ell^p}^{\xi_{c^\ast}}$ being part of the Bregman distance $D_{\tilde{\mathcal{R}}}^{\xi^\ast}$. From the smallness assumption we conclude that $\kappa_1<1$ and $\kappa_3 <\min (1, \frac{\gamma}{2\alpha_\mathrm{max}})$.
\end{proof}

In the following we present a convergence rate result based on the source condition in Assumption \ref{assumption:sparseApproximation} which now includes the case $p=1$ where we obtain the same order of convergence as in Theorem~\ref{theorem:sparsityConvergenceEasy}.
\begin{theorem}\label{theorem:ConvergenceRatesSparsityFull}
Let $u_\delta \in Z$ with $\| u^\ast - u_\delta \| \leq \delta$ and $\| s^\ast -s_{\mathrm{mod},\epsilon} \| \leq \epsilon$. Let $1\leq p\leq 2$ and let Assumption \ref{assumption:sparseApproximation} be fulfilled. For $0<\alpha \leq \alpha_\mathrm{max}< \infty$, $\beta=\nu_1 \alpha$, and $\mu=\nu_2 \alpha$ the minimizer of the functional $J^{\delta,\epsilon}_{\alpha,\beta,\mu}$ as defined in~\eqref{eq:generalFunctional2} is denoted by $(c^\alpha,s^\alpha)$. 
Further assume that $\alpha \sim (\delta+\epsilon)$. Then it holds
\begin{equation}
	D_{\tilde{\mathcal{R}}}^{\xi^\ast}((c^\alpha,s^\alpha),(c^\ast,s^\ast)) = \mathcal{O}(\delta + \epsilon) \quad \textnormal{and} \quad
	\lVert B(c^\alpha,s^\alpha)-B(c^\ast,s^\ast) \rVert = \mathcal{O}(\delta + \epsilon).
\end{equation}
\end{theorem}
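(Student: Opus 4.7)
I would essentially follow the four-step strategy of the proof of Theorem~\ref{theorem:sparsityConvergenceEasy}, with the main modification being the way the new source condition \eqref{eq:source_cond2} is handled. The only qualitatively new ingredient is the extra quadratic remainder $\kappa_3\lVert s-s^\ast\rVert^2$ it produces on the right-hand side; the rest of the argument is mechanical.

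Starting from the minimising property $J^{\delta,\epsilon}_{\alpha,\nu_1\alpha,\nu_2\alpha}(c^\alpha,s^\alpha) \leq J^{\delta,\epsilon}_{\alpha,\nu_1\alpha,\nu_2\alpha}(c^\ast,s^\ast)$ and using $\lVert B(c^\ast,s^\ast)-u_\delta\rVert\leq\delta$, $\lVert s^\ast-s_{\mathrm{mod},\epsilon}\rVert\leq\epsilon$, I rewrite $\alpha\tilde{\mathcal{R}}(c^\alpha,s^\alpha)$ via the Bregman-distance identity with a subgradient $\xi^\ast\in\partial\tilde{\mathcal{R}}(c^\ast,s^\ast)$. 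This yields
\begin{equation*}
\frac{1}{2}\lVert B(c^\alpha,s^\alpha)-u_\delta\rVert^2 + \frac{\gamma}{2}\lVert s^\alpha - s_{\mathrm{mod},\epsilon}\rVert^2 + \alpha D_{\tilde{\mathcal{R}}}^{\xi^\ast}((c^\alpha,s^\alpha),(c^\ast,s^\ast)) \leq \frac{\delta^2+\gamma\epsilon^2}{2} - \alpha\langle \xi^\ast,(c^\alpha,s^\alpha)-(c^\ast,s^\ast)\rangle.
\end{equation*}
The inner product on the right is estimated by Assumption~\ref{assumption:sparseApproximation}(ii), and the elementary inequality $\lVert B(c^\alpha,s^\alpha)-u_\delta\rVert^2\geq\tfrac{1}{2}\lVert B(c^\alpha,s^\alpha)-B(c^\ast,s^\ast)\rVert^2 - \delta^2$ is applied to expose a $\lVert B(c^\alpha,s^\alpha)-B(c^\ast,s^\ast)\rVert^2$ term on the left.

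The key step, which I expect to be the main obstacle, is controlling the $\alpha\kappa_3\lVert s^\alpha-s^\ast\rVert^2$ summand that the source condition introduces on the right. I would control it by a Young-type bound
\begin{equation*}
\lVert s^\alpha-s^\ast\rVert^2 \leq (1+\eta)\lVert s^\alpha - s_{\mathrm{mod},\epsilon}\rVert^2 + (1+1/\eta)\,\epsilon^2
\end{equation*}
for a suitable $\eta>0$, and then absorb the resulting $\alpha\kappa_3(1+\eta)\lVert s^\alpha-s_{\mathrm{mod},\epsilon}\rVert^2$ into the $\tfrac{\gamma}{2}\lVert s^\alpha-s_{\mathrm{mod},\epsilon}\rVert^2$ term on the left. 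This absorption works provided $\alpha\kappa_3(1+\eta) \leq \tfrac{\gamma}{2}$. Since $\kappa_3 < \gamma/(2\alpha_\mathrm{max})$ and $\alpha\leq\alpha_\mathrm{max}$ imply $\alpha\kappa_3<\gamma/2$ strictly, an admissible $\eta>0$ exists; e.g.\ $\eta = \tfrac{1}{2}(\gamma/(2\alpha_\mathrm{max}\kappa_3) - 1)$ is a choice uniform in $\alpha\leq\alpha_\mathrm{max}$. This is exactly the point at which the factor $1/(2\alpha_\mathrm{max})$ in the smallness condition on $\kappa_3$ is used; the resulting $(1+1/\eta)\alpha\kappa_3\epsilon^2$ summand only contributes an $\mathcal{O}(\epsilon^2)$ term to the right-hand side.

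After this absorption, dropping the (now non-negative) $\lVert s^\alpha-s_{\mathrm{mod},\epsilon}\rVert^2$ term and the $\alpha(1-\kappa_1)D_{\tilde{\mathcal{R}}}^{\xi^\ast}$ term on the left reduces everything to a scalar quadratic inequality of the form $\tfrac{1}{4}t^2 - \alpha\kappa_2 t \leq C(\delta^2+\epsilon^2)$ for $t:=\lVert B(c^\alpha,s^\alpha)-B(c^\ast,s^\ast)\rVert$, with $C$ independent of $\delta,\epsilon,\alpha$. Solving gives $t \leq 2\alpha\kappa_2 + 2\sqrt{\alpha^2\kappa_2^2 + C(\delta^2+\epsilon^2)}$, which under $\alpha\sim\delta+\epsilon$ yields $\lVert B(c^\alpha,s^\alpha)-B(c^\ast,s^\ast)\rVert = \mathcal{O}(\delta+\epsilon)$. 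Substituting this bound back into the main inequality, dividing by $\alpha(1-\kappa_1)>0$, and using $(\delta^2+\epsilon^2)/\alpha = \mathcal{O}(\delta+\epsilon)$ then delivers $D_{\tilde{\mathcal{R}}}^{\xi^\ast}((c^\alpha,s^\alpha),(c^\ast,s^\ast)) = \mathcal{O}(\delta+\epsilon)$.
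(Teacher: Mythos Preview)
Your argument is correct and follows the same overall strategy as the paper: start from the minimising property, expand via the Bregman identity, apply the variational source condition~\eqref{eq:source_cond2}, absorb the resulting $\alpha\kappa_3\lVert s^\alpha-s^\ast\rVert^2$ term using the smallness bound on $\kappa_3$, and finish with the quadratic-inequality argument of Theorem~\ref{theorem:sparsityConvergenceEasy}.

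The only technical difference lies in the absorption step. You keep $\tfrac{\gamma}{2}\lVert s^\alpha-s_{\mathrm{mod},\epsilon}\rVert^2$ on the left and use a Young inequality with an auxiliary parameter $\eta>0$ to convert the right-hand side term $\lVert s^\alpha-s^\ast\rVert^2$ back into $\lVert s^\alpha-s_{\mathrm{mod},\epsilon}\rVert^2$ plus an $\mathcal{O}(\epsilon^2)$ remainder. The paper goes the other direction: it first applies the parallelogram-law step from part (ii) of the proof of Theorem~\ref{theorem:sparsityConvergenceEasy} to replace $\lVert s^\alpha-s_{\mathrm{mod},\epsilon}\rVert^2$ on the left by a positive multiple of $\lVert s^\alpha-s^\ast\rVert^2$, so that the absorption becomes the one-line observation $\alpha\kappa_3-\gamma/2<0$ without any auxiliary parameter. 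Both routes are valid; the paper's is slightly cleaner and avoids the $\eta$ bookkeeping, but yours works just as well.
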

\begin{proof}
	For the first two steps we refer to step (i) and (ii) in the proof of Theorem \ref{theorem:sparsityConvergenceEasy}
	(up to equation ~\eqref{eqinproof:ConvergenceRatesBeforeScalarProductSp}). Thus, we start with
	\begin{align*}
		\frac{1}{4} &\lVert B(c^\alpha,s^\alpha)-B(c^\ast,s^\ast) \rVert^2
		+ \frac{\gamma}{2} \lVert s^\alpha -s^\ast \rVert^2 \\
		&\leq \delta^2 + \gamma \epsilon^2 
		- \alpha \left( D_{\tilde{\mathcal{R}}}^{\xi^\ast}
	 ((c^\alpha,s^\alpha),(c^\ast,s^\ast))
	+ \langle (\xi_{c^\ast},\xi_{s^\ast}),(c^\alpha,s^\alpha)-(c^\ast,s^\ast) \rangle \right).
	\end{align*}
Using the source condition in Assumption~\ref{assumption:sparseApproximation}, the above equation transforms to 
	\begin{align*}
		\frac{1}{4} &\lVert B(c^\alpha,s^\alpha)-B(c^\ast,s^\ast) \rVert^2
		+ \frac{\gamma}{2} \lVert s^\alpha -s^\ast \rVert^2 \\
		&\leq \delta^2 + \gamma \epsilon^2 
		- \alpha D_{\tilde{\mathcal{R}}}^{\xi^\ast}
	 ((c^\alpha,s^\alpha),(c^\ast,s^\ast)) \\
	&+ \alpha \left( \kappa_1 D_{\tilde{\mathcal{R}}}^{\xi^\ast}((c^\alpha,s^\alpha),(c^\ast,s^\ast)) + 
	\kappa_2 \lVert B(c^\alpha,s^\alpha)- B(c^\ast,s^\ast) \rVert
	+ \frac{\kappa_3}{2} \lVert s^\alpha-s^\ast \rVert^2 \right),
	\end{align*}
	which we reorder to 
	\begin{align}
		\frac{1}{4} &\lVert B(c^\alpha,s^\alpha)-B(c^\ast,s^\ast) \rVert^2
		+ \alpha (1-\kappa_1) D_{\tilde{\mathcal{R}}}^{\xi^\ast}((c^\alpha,s^\alpha),(c^\ast,s^\ast)) \notag \\
		&\leq \delta^2 + \gamma \epsilon^2 + \kappa_2 \lVert B(c^\alpha,s^\alpha)-B(c^\ast,s^\ast) \rVert + 
		\frac{1}{2} \left(\alpha\kappa_3-\frac{\gamma}{2}\right) \lVert s^\alpha-s^\ast\rVert^2 \notag \\
		&\leq \delta^2 + \gamma \epsilon^2 + \kappa_2 \lVert B(c^\alpha,s^\alpha)-B(c^\ast,s^\ast) \rVert. \label{eqinproof:ConvergenceRatesPrelEstimate}
	\end{align}
	Since the factor $(1-\kappa_1)$ is positive we can use the same line of reasoning as in the proof of Theorem~\ref{theorem:sparsityConvergenceEasy} (starting after equation~\eqref{eq:ConvergenceRatesInequalityBregmanDistanceRSp}) and get
	\begin{equation}
		\lVert B(c^\alpha,s^\alpha)-B(c^\ast,s^\ast) \rVert = \mathcal{O}(\delta+\epsilon) \quad \textnormal{for } \alpha\sim \delta+\epsilon.
	\end{equation}
	With this we immediately deduce from equation~\eqref{eqinproof:ConvergenceRatesPrelEstimate}, that also
	\begin{equation}
		D_{\tilde{\mathcal{R}}}^{\xi^\ast}((c^\alpha,s^\alpha),(c^\ast,s^\ast)) 
		= \mathcal{O}(\delta+\epsilon).
	\end{equation}
	This concludes the proof.
\end{proof}
As before we can directly infer convergence rates in the norm for $1 < p \leq 2$. 
\begin{corollary}
From Theorem~\ref{theorem:ConvergenceRatesSparsityFull} the convergence rate
\begin{equation}
	\lVert c^\alpha - c^\ast \rVert = \mathcal{O}(\sqrt{\delta+\epsilon}), \quad 1<p\leq 2,
\end{equation}
follows. 
\end{corollary}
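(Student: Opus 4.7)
The plan is to follow the same strategy used in Corollary \ref{corollary:NormConvergenceSparsity}, since the conclusion to be proved has the identical form and only the source condition underlying the preceding theorem has changed. The starting point is the Bregman-distance rate
\[
D_{\tilde{\mathcal{R}}}^{\xi^\ast}((c^\alpha,s^\alpha),(c^\ast,s^\ast)) = \mathcal{O}(\delta+\epsilon)
\]
delivered by Theorem \ref{theorem:ConvergenceRatesSparsityFull}. By Corollary \ref{lemma:BregmanDistancePhisp}, $D_{\tilde{\mathcal{R}}}^{\xi^\ast}$ decomposes as a sum of three non-negative terms, one of which is $D_{\Phi_p}^{\xi_{c^\ast}}(c^\alpha,c^\ast)$. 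Hence the same asymptotic rate transfers to this single summand, i.e., $D_{\Phi_p}^{\xi_{c^\ast}}(c^\alpha,c^\ast)=\mathcal{O}(\delta+\epsilon)$.

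Next, I would invoke the same chain of inequalities as in the proof of Corollary \ref{corollary:NormConvergenceSparsity}: write out the subgradient of $\Phi_p$ in the orthonormal basis $\{\varphi_i\}$, bound the weighted sum below by $w_{\min}=1$ times the unweighted sum to reduce it to $D_{\ell^p}^{\xi_{c^\ast}}(c^\alpha,c^\ast)$, and then use the $2$-convexity of $\ell^p$ for $1<p\le 2$ to obtain
\[
D_{\Phi_p}^{\xi_{c^\ast}}(c^\alpha,c^\ast) \;\geq\; C\,\lVert c^\alpha - c^\ast \rVert^2
\]
for some constant $C>0$. Dividing through yields $\lVert c^\alpha - c^\ast\rVert^2 = \mathcal{O}(\delta+\epsilon)$, and taking square roots gives the claimed $\mathcal{O}(\sqrt{\delta+\epsilon})$ rate.

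There is essentially no obstacle here: the only place where $p>1$ is genuinely used is the $2$-convexity step (which is exactly why the corollary is stated only for $1<p\leq 2$, even though Theorem \ref{theorem:ConvergenceRatesSparsityFull} itself covers $p=1$). The argument is short enough that it could reasonably be absorbed into a single sentence pointing back to Corollary \ref{corollary:NormConvergenceSparsity}, with only the underlying Bregman-distance rate replaced by that of Theorem \ref{theorem:ConvergenceRatesSparsityFull}.
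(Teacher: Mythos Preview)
Your proposal is correct and matches the paper's approach exactly: the paper's proof is the single sentence ``The assertion follows analogously to the proof of Corollary~\ref{corollary:NormConvergenceSparsity},'' which is precisely the reduction you carry out in detail.
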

\begin{proof}
	The assertion follows analogously to the proof of Corollary~\ref{corollary:NormConvergenceSparsity}.
\end{proof}

\begin{remark}
The case $p=1$ requires additional assumptions to connect the Bregman distance to the norm, such as a sparsity assumption on the minimum norm solution in a certain basis like for example in \cite{Bredies:2008gb}. A possible proof for the case $p=1$ may follow the proof for \cite[Thm. 3.54]{Scherzer:2009tt}.
\end{remark}

Finally, we conclude this section with the a convergence rate result of $s$ with respect to the norm topology.  
\begin{corollary}
From Theorem~\ref{theorem:ConvergenceRatesSparsityFull} it follows the convergence rate
\begin{equation}
	\lVert s^\alpha - s^\ast \rVert = \mathcal{O}(\sqrt{\delta+\epsilon}).
\end{equation}
\end{corollary}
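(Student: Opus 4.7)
The plan is to extract the norm estimate for $s^\alpha - s^\ast$ by revisiting the inequality obtained midway through the proof of Theorem \ref{theorem:ConvergenceRatesSparsityFull} \emph{before} the $\frac{\gamma}{2}\|s^\alpha - s^\ast\|^2$ term is discarded. That proof first established
\begin{equation*}
\frac{1}{4}\|B(c^\alpha,s^\alpha)-B(c^\ast,s^\ast)\|^2 + \frac{\gamma}{2}\|s^\alpha-s^\ast\|^2 \leq \delta^2 + \gamma\epsilon^2 - \alpha D_{\tilde{\mathcal{R}}}^{\xi^\ast}((c^\alpha,s^\alpha),(c^\ast,s^\ast)) - \alpha \langle(\xi_{c^\ast},\xi_{s^\ast}),(c^\alpha,s^\alpha)-(c^\ast,s^\ast)\rangle,
\end{equation*}
and then, after applying the source condition from Assumption \ref{assumption:sparseApproximation}, reached an inequality of the form
\begin{equation*}
\tfrac{1}{4}\|B(c^\alpha,s^\alpha)-B(c^\ast,s^\ast)\|^2 + \alpha(1-\kappa_1)D_{\tilde{\mathcal{R}}}^{\xi^\ast}((c^\alpha,s^\alpha),(c^\ast,s^\ast)) + \tfrac{1}{2}\left(\tfrac{\gamma}{2}-\alpha\kappa_3\right)\|s^\alpha-s^\ast\|^2 \leq \delta^2 + \gamma\epsilon^2 + \alpha\kappa_2\|B(c^\alpha,s^\alpha)-B(c^\ast,s^\ast)\|.
\end{equation*}

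First, I would note that Assumption \ref{assumption:sparseApproximation}(ii) ensures $\kappa_3 < \gamma/(2\alpha_\mathrm{max})$, so for every admissible $\alpha \leq \alpha_\mathrm{max}$ the coefficient $\frac{1}{2}(\gamma/2 - \alpha\kappa_3)$ is strictly positive and bounded away from zero by a constant depending only on $\gamma$, $\alpha_\mathrm{max}$ and $\kappa_3$. Dropping the two non-negative terms on the left-hand side gives
\begin{equation*}
\tfrac{1}{2}\left(\tfrac{\gamma}{2}-\alpha\kappa_3\right)\|s^\alpha-s^\ast\|^2 \leq \delta^2 + \gamma\epsilon^2 + \alpha\kappa_2\|B(c^\alpha,s^\alpha)-B(c^\ast,s^\ast)\|.
\end{equation*}

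Second, I would plug in the already established discrepancy rate from Theorem \ref{theorem:ConvergenceRatesSparsityFull}, namely $\|B(c^\alpha,s^\alpha)-B(c^\ast,s^\ast)\| = \mathcal{O}(\delta+\epsilon)$, together with the parameter choice $\alpha \sim \delta+\epsilon$. This yields
\begin{equation*}
\|s^\alpha-s^\ast\|^2 \leq C_1(\delta^2+\epsilon^2) + C_2(\delta+\epsilon)\cdot\mathcal{O}(\delta+\epsilon) = \mathcal{O}(\delta+\epsilon),
\end{equation*}
after using $\delta^2+\epsilon^2 \leq (\delta+\epsilon)^2 \leq (\delta+\epsilon)$ for sufficiently small noise (or by directly absorbing the quadratic term in the linear one via the standard comparison). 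Taking square roots yields $\|s^\alpha-s^\ast\| = \mathcal{O}(\sqrt{\delta+\epsilon})$, which is the claim.

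There is no serious obstacle: the entire argument is a bookkeeping step, exploiting that the $\frac{\gamma}{2}\|s^\alpha-s^\ast\|^2$ term, which was simply thrown away in the previous proof to derive the Bregman-distance and discrepancy rates, can instead be retained. The only point requiring care is verifying that the residual coefficient $\frac{1}{2}(\gamma/2 - \alpha\kappa_3)$ stays uniformly positive as $\alpha \to 0$, which is immediate from the smallness assumption on $\kappa_3$ built into Assumption \ref{assumption:sparseApproximation}.
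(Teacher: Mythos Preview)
Your argument is correct: retaining the $\|s^\alpha-s^\ast\|^2$ term in the intermediate inequality of Theorem~\ref{theorem:ConvergenceRatesSparsityFull}, observing that its coefficient $\tfrac{1}{2}(\gamma/2-\alpha\kappa_3)$ stays uniformly positive by the smallness condition on $\kappa_3$, and then feeding in the already established discrepancy rate $\|B(c^\alpha,s^\alpha)-B(c^\ast,s^\ast)\|=\mathcal{O}(\delta+\epsilon)$ together with $\alpha\sim\delta+\epsilon$ gives the claim.

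The paper, however, takes a more elementary route that avoids the source condition entirely. It uses only the minimizing property $J^{\delta,\epsilon}_{\alpha}(c^\alpha,s^\alpha)\leq J^{\delta,\epsilon}_{\alpha}(c^\ast,s^\ast)$ and the fact that $\tfrac{\gamma}{2}\|s-s_{\mathrm{mod},\epsilon}\|^2$ is already a summand of the functional, to obtain
\[
\tfrac{\gamma}{2}\|s^\alpha-s_{\mathrm{mod},\epsilon}\|^2 \leq \tfrac{1}{2}(\delta^2+\gamma\epsilon^2)+\alpha\,\tilde{\mathcal{R}}(c^\ast,s^\ast),
\]
and then passes to $\|s^\alpha-s^\ast\|$ via $\|s^\alpha-s^\ast\|^2\leq 2\|s^\alpha-s_{\mathrm{mod},\epsilon}\|^2+2\epsilon^2$. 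With $\alpha\sim\delta+\epsilon$ this yields the same $\mathcal{O}(\sqrt{\delta+\epsilon})$ rate. The trade-off: the paper's argument is shorter and shows that this particular rate for $s$ does not rely on Assumption~\ref{assumption:sparseApproximation} at all (only on the minimizer property and the parameter choice), whereas your approach reuses machinery from the previous proof and works directly with $\|s^\alpha-s^\ast\|$ without detouring through $s_{\mathrm{mod},\epsilon}$.
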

\begin{proof}
We exploit the estimate 
\begin{equation} 
\frac{\gamma}{2} \| s^\alpha - s_{\mathrm{mod},\epsilon} \|^2 \leq J_\alpha^{\delta,\epsilon}(c^\alpha, s^\alpha) \leq J_\alpha ^{\delta,\epsilon}(c^\ast,s^\ast) \leq \underset{\leq \frac12 \max (1,\gamma) (\delta + \epsilon)^2}{\underbrace{\frac{\delta^2 + \gamma \epsilon^2}{2}}} + \alpha \tilde{\mathcal{R}}(c^\ast,s^\ast)
\end{equation}
and the parameter choice rule $\alpha \sim \delta+ \epsilon$ to estimate
\begin{equation}
    \| s^\alpha - s^\ast\|^2 \leq 2\| s^\alpha - s_{\mathrm{mod},\epsilon} \|^2 + 2\epsilon^2 \leq ( 2\max (1,\gamma) / \gamma  + 2 ) (\delta + \epsilon)^2 + C (\delta + \epsilon) 
\end{equation}
which concludes the proof.
\end{proof}
\begin{remark}
We have not chosen $\mathcal{R}_s$, yet. Depending on the specific penalty one might derive better convergence rates in the norm topology. Further investigations in this direction are beyond the scope of the present work.
\end{remark}

\section{Algorithmic solution}
\label{sec:algorithm}

%The discrete setup is as follows. 
%Let $X=L^2(\Omega)$, $Y=L^2(\Omega \times \Gamma)$, and $Z=L^2(\Gamma)$. And let the forward operator be given by a Fredholm integral operator of the first kind, i.e., 
%\begin{equation}
%    B(c,s)=\left( t \mapsto \int_\Omega c(x) s(x,t) \d t\right)
%\end{equation}
The algorithmic solution is derived for the discretized integral operator in \eqref{eq:example_integral_operator}. 
After discretization in suitable finite-dimensional subspaces of the spaces $X$, $Y$, and $Z$, we obtain the discretized problem in terms of a matrix-vector product, i.e., the discretized operator 
$\tilde{B}: \R^{M} \times \K^{K\times M} \rightarrow \K^K$
with 
\begin{equation}
    (c,S) \mapsto Sc
\end{equation}
where $\K=\R,\C$ and $N,M,K \in \N$. 
%\MJcomment{I prefer to use the complex formulation, because it is tight and provides better solution. So I change to the complex formulation in the following.}
%\TKcomment{One could also formulate the problem for complex valued matrices but as we can concatenate real and imaginary part due to a real valued concentration $c$, this would not be necessary. I'm fine with both formulations. For the concentration reconstruction from real data the reconstruction quality is often better when c is reconstructed from the complex-valued system matrix. The reason for this observation is still unexplored. It might be related to an implicit additional degree of freedom as the projection to real numbers takes place after an entire sweep over the matrix in the Kaczmarz-type algorithm. } 
Information source of type A analogously translates into the discrete setup, where either $S_\mathrm{mod} \in \K^{K\times M}$ or $S_{\mathrm{mod},\epsilon} \in \K^{K \times M}$ is available.
For information source of type B we assume a given $S_\mathrm{calib} \in \K^{K \times N}$, $N<M$.
And an operator $\tilde{P}: \K^{K\times M} \rightarrow \K^{K \times N}$ derived from the linear operator $P$ which affects the integral kernel in its space variable $x$ only. 
The measurement dimension remains unaffected. 
In this specific case the operator $\tilde{P}$ can be represented by a matrix $Q\in \K^{M \times N} $, i.e., $S \mapsto \tilde{P}(S)=SQ$.  
According to the particular discretizations $R_c:\R^M \rightarrow \R_+$ and $R_s: \K^{K\times M} \rightarrow \R_+$ are obtained from $\mathcal{R}_c$ and $\mathcal{R}_s$.

In this work, the joint reconstruction of an image $c$ and the system matrix $S$ is obtained by minimizing
\begin{equation}
    \label{eq:joint:min:c:S}
    J (c, S) =
    \frac{1}{2} \| S c - u\|^2 + \frac{\gamma}{2} \|S - S_{\mathrm{mod}}\|^2_F 
    + \frac{\mu}{2} \|S Q - S_{\mathrm{calib}}\|^2_F
    + \alpha R_c (c) + \beta R_s (S)
\end{equation}
for given $\alpha,\beta,\gamma,\mu \geq 0$ which we solve by alternatingly minimizing $J (c, S)$ with respect to $c$ and $S$ following two steps:
\begin{align}
    \label{eq:alter:min:c:penalized:1}
    \min_{c \in \R^{M}} 
    & \underset{=:J^c (c) }{\underbrace{\frac{1}{2} \| S c - u\|^2 + \alpha R_c (c)}},\\
    \label{eq:alter:min:S:penalized:1}
    \min_{S \in \K^{K\times M}}
    & \underset{=:J^S (S) }{\underbrace{\frac{1}{2} \| S c - u\|^2 
    + \frac{\gamma}{2} \|S - S_{\mathrm{mod}}\|^2_F 
    + \frac{\mu}{2} \|S Q - S_{\mathrm{calib}}\|^2_F
    + \beta R_s (S)}}.
\end{align}
For the regularization of particle density, based on previous experience, we use the following combination of $\ell^1$ and $\ell^2$ regularization
\begin{equation}
    \label{eq:c:reg:L1+L2}
    R_c (c) = |c|_2^2+ \frac{\lambda}{\alpha} |c|_1.
\end{equation}
For the regularization of the system matrix, the term with $S_{\mathrm{mod}}$ including the Frobenius norm provides an $\ell^2$-type regularization. The term with $S_{\mathrm{calib}}$ provides a priori information on the high resolution system matrix. Here, we do not impose further regularization on the system matrix with our prior knowledge on MPI. Therefore, the reconstruction functionals are set to 
\begin{align}
    \label{eq:alter:min:c:penalized}
    J^c (c) 
    & = \| S c - u\|^2 + \tilde{\alpha}^2 |c|_2^2 + \lambda |c|_1,\\
    \label{eq:alter:min:S:penalized}
    J^S (S) 
    & = \| S c - u\|^2 
    + \tilde{\gamma}^2 \|S - S_{\mathrm{mod}}\|^2_F 
    + \tilde{\mu}^2 \|S Q - S_{\mathrm{calib}}\|^2_F
\end{align}
where we have substituted regularization parameters for notational simplicity of the numerical algorithms presentations, i.e., $\tilde{\alpha}=\sqrt{\frac12\alpha}$, $\tilde\gamma$ and $\tilde\mu$ analogously. 

In the remainder we focus on the algorithmic derivation for $\K = \C$ as the system matrix in MPI is commonly complex-valued. Furthermore $c$ is equipped with a positivity constraint.

\subsection{Regularized Kaczmarz algorithm}
\label{subsec:reg:kacamarz}

When the regularization parameters are fixed, we derive the following reconstruction algorithm with the regularization terms in \eqref{eq:c:reg:L1+L2} in the following. The Kaczmarz algorithm is chosen for the following reasons. For MPI a Kaczmarz-type algorithm \cite{dax1993row} still defines one of the standard methods since the very beginning of MPI research \cite{Knopp2017} which seems to benefit qualitatively from an advantageous influence of early stopping. 
More recently this has also been observed quantitatively on real MPI data \cite{KluthJin2019b_preprint}.    
%Because the highly ill-posedness of MPI because of inaccurate system matrix and highly noisy measured data, we found that the Kaczmarz algorithm, although it is slow, can reconstruct better images in our experiments than the sophisticated package such as CVX and mosek. 
Moreover, because of its row-action nature, the Kaczmarz algorithm provides us the flexibility for choosing the order of using the measured data $u$ and steer the reconstruction process. Because our formulation is an unconventional mixture of real and complex spaces and with additional regularization terms, we provide the derivation of the Kaczmarz algorithm with the regularization terms used in this work. The algorithms are developed by applying an alternating minimizing approach, or the incremental gradient descent method, to the reconstruction functionals in \eqref{eq:alter:min:c:penalized} and \eqref{eq:alter:min:S:penalized}  with appropriate decomposition of both functionals.

Because the Kaczmarz algorithm is based on the projection onto hyperplanes \cite{censor2009note}, we need the projection onto hyperplanes in the complex $\C^M$. For a hyperplane $H$ in $\C^M$, given by $a\in\C^M$ and $b\in \C$, i.e.,
\begin{equation}
H = \{ z \in \C^M:\, \sum_{m = 1}^M a_m z_m  = b \},
\end{equation}
the projection $P_H$ for any $z \in \C^M$ to $H$ is equal to
\begin{equation}
P_H[z] = z -  \frac{\sum_{m = 1}^M a_m z_m - b}{\|a\|^2} \bar{a},
\end{equation}
where $\bar{\cdot}$ is the complex conjugate, and vectors $z$ and $a$ are column vectors. For a $\ell^2$-regularized least-squares problem as
\begin{align}
    \label{eq:LS:L2:penalized}
    \| A z - b\|^2 + \eta^2 |z - z_0|_2^2,
\end{align}
for $A \in \C^{K \times M}$, $z_0 \in \C^M$, and $\eta > 0$, 
by introducing an auxiliary variable $v \in \C^K$, the following system of equations is consistent \cite{herman_lent_hurwitz_1980, herman_2009},
\begin{align}
    \label{eq:LS:L2:v:z}
    \begin{pmatrix}
    \eta I & A
    \end{pmatrix}
    \begin{pmatrix}
    v \\
    z
    \end{pmatrix}
     =  b - A z_0,
\end{align}
where $I$ is the $K \times K$ identity matrix. If
$\begin{pmatrix}
v \\ z
\end{pmatrix}$
is the minimal norm solution of \eqref{eq:LS:L2:v:z}, then
\begin{equation}
    z^* = z + z_0
\end{equation}
is a minimizer for \eqref{eq:LS:L2:penalized} \cite{herman_lent_hurwitz_1980, herman_2009}. One Kaczmarz step for \eqref{eq:LS:L2:v:z} and an iteration-to-row index mapping $g_A:\N \rightarrow \{1,\hdots,K\}$ is 
\begin{align}
    \label{eq:LS:L2:v:z:v:it}
    v^{j+1} 
    & = v^j + 
      \eta \tau
      \frac{b_k - A_k z_0 - \eta v^j_k - A_k z^j}{\eta^2 + \|A_k\|^2} \overline{I_{k}}^{\textrm{Tr}},\\
    \label{eq:LS:L2:v:z:z:it}
    z^{j+1} 
    & = z^j + 
      \tau 
      \frac{b_k - A_k z_0 - \eta v^j_k - A_k z^j}{\eta^2 + \|A_k\|^2} 
      \overline{A_{k}}^{\textrm{Tr}},
\end{align}
with $k=g_A(j)$ and where ${\cdot}^{\textrm{Tr}}$ is the transpose, $I_k$ is the $k$-th column of the $K \times K$ identity matrix $I$ and $A_k$ is the $k$-th row of $A$ for $1 \le k \le K$. Each step involves two equations in the complex form, and thus four equations in the real space. Hence, the iterations in \eqref{eq:LS:L2:v:z:v:it} and \eqref{eq:LS:L2:v:z:z:it} are block-iterative of block size 4, unlike the original Kaczmarz iteration (in real space) of block size 1. Because the system of equations \eqref{eq:LS:L2:v:z} is consistent, the above Kaczmarz algorithm will converge to the minimal norm solution of \eqref{eq:LS:L2:v:z} with the relaxation parameter $\tau \in (0, 2)$ and zero initial values for $v^j$ and $z^j$ \cite{jiang_ge_sart_2003}. One equivalent form of iteration in \eqref{eq:LS:L2:v:z:v:it} and \eqref{eq:LS:L2:v:z:z:it} is as follows, by choosing the initial values $v^0 = 0$ and $z^0 = z_0$ \cite{herman_lent_hurwitz_1980, herman_2009}, 
\begin{align}
    v^{j+1} 
    & = v^j + 
      \eta \tau
      \frac{b_k - A_k z^j - \eta v^j_k}{\eta^2 + \|A_k\|^2} 
      \overline{I_{k}}^{\textrm{Tr}},\\
    z^{j+1} 
    & = z^j + 
      \tau 
      \frac{b_k - A_k z^j - \eta v^j_k}{\eta^2 + \|A_k\|^2} 
      \overline{A_{k}}^{\textrm{Tr}},
\end{align}
for $k=g_A(j)$. % $1 \le k \le K$. 
Because $I_k$ is equal to 1 at its $k$-th component and zero otherwise, the above iteration can be reduced to
\begin{align}
    \label{eq:LS:L2:v:z:v:it:s}
    v^{j+1}_k 
    & = v^j_k + 
      \eta \tau 
      \frac{b_k - A_k z^j - \eta v^j_k}{\eta^2 + \|A_k\|^2}, \\
    \label{eq:LS:L2:v:z:z:it:s}
    z^{j+1} 
    & = z^j + 
      \tau 
      \frac{b_k - A_k z^j - \eta v^j_k}{\eta^2 + \|A_k\|^2} 
      \overline{A_{k}}^{\textrm{Tr}},
\end{align}
for $k=g_A(j)$. % $1 \le k \le K$.

In the following we apply this technique to the joint reconstruction problem. Thus, let the image $c = (c_1, \cdots, c_M)^{\textrm{Tr}} \in \R^M$, the measurement $u = (u_1, \cdots, u_K)^{\textrm{Tr}} \in \C^K$, and $S_k$ is the $k$-th row of the system $S \in \C^{K \times M}$. 

\subsubsection{Regularized Kaczmarz algorithm for $J^c (c)$}
\label{subsubsec:reg:kacamarz:c}
The reconstruction functional for the image $c$ is decomposed into the following two terms
\begin{align}
    J^c (c) 
    & = J^c_{\ell^2}(c) + J^c_{\ell^1}(c),\\
    \intertext{with}
    J^c_{\ell^2}(c) & = \| S c - u\|^2 + \tilde{\alpha} ^2 |c|_2^2,\\
    J^c_{\ell^1}(c) & = \lambda |c|_1.
\end{align}

For $J^c_{\ell^2}(c)$, by applying \eqref{eq:LS:L2:v:z:v:it} and \eqref{eq:LS:L2:v:z:z:it}, one Kaczmarz step for $c$ is then
\begin{align}
    \label{eq:kacamarz:c:D}
    c^{j+1} 
    & = c^j + 
    \tau \frac{u_k - S_k c^j - \tilde{\alpha} v^j_k}{\tilde{\alpha}^2 \lambda + \|S_k\|^2} \overline{S_{k}}^{\textrm{Tr}},\\
    v^{j+1}_k 
    & = v^j_k + 
    \tilde{\alpha} \tau \frac{u_k - S_k c^j - \tilde{\alpha} v^j_k}{\tilde{\alpha}^2 \lambda + \|S_k\|^2},
\end{align}
with $k=g_A(j)$ and where ${\cdot}^{\textrm{Tr}}$ is the transpose, $v^j \in \C^K$ is auxiliary for computing the iteration of $c$. The initial values must be $c^0 = 0$ and $v^0 = 0$. One sweep of the Kaczmarz algorithm consists of applying \eqref{eq:kacamarz:c:D} for all measured data $u_k$ for $k = 1, \hdots, K$. Of course, the iteration order over the measured data can be crucial for the reconstructed image quality within finite number of iterations. Here, we use the simple order given by $g_A(j)=(j \ \mathrm{mod} \ K) +1$.  

Because the particle density $c$ is real and non-negative, we apply a projection $P_{+}$ onto the non-negative quadrant of $\R^M$ after each Kaczmarz sweep over the entire matrix. 

The $\ell^1$ regularization is applied after the projection $P_{+}$. It is performed by applying the soft thresholding operator \cite{Boyd_2011}, $T_\lambda: \R \rightarrow \R$, 
\begin{align}
    T_\lambda (c_m)
    & = (c_m - \lambda)_+ - (- c_m - \lambda)_+
\end{align}
where $(\cdot)_+=\max (0,\cdot)$.
The regularized Kaczmarz algorithm for solving \eqref{eq:alter:min:c:penalized} is summarized in Algorithm~\ref{algorithm:Kaczmarz:c}. The convergence criteria usually consist of the following: a given limit of iteration number, a threshold on the difference between the current and last iterates, such as $\|c^{j+1} - c^j\|$, or a relative difference such as $\frac{\|c^{j+1} - c^j\|}{\|c^j\|}$.

% Because the above derivation can be applied for general weighted least-squares functional, in addition to the weight $\frac{1}{\|S_k\|^2}$ in \eqref{eq:kacamarz:c:D}, other weights, or in general, pre-conditioner for ill-posed problems can be applied. We can simply apply the same method after the system mattrix is updated with the pre-conditioner. 

\begin{algorithm}[H]
\SetAlgoLined
\KwResult{Iterative reconstruction for partical density $c$ with given regularization parameters $\tilde{\alpha}$, $\lambda$}
 initialization\;
 choose a sweep order for measured data\;
 set $c = 0 \in \C^M$\;
 set $v = 0 \in \C^K$\;
 set $\tau = \tau_0 \in (0, 2)$\;
 \While{convergence criteria do not meet}{
    Kaczmarz sweep:
    \begin{align}
    K_\textrm{aczmarz} & \leftarrow 
    \tau \frac{u_k - S_k c - \tilde{\alpha} v_k }{\tilde{\alpha}^2 + \|S_k\|^2}\;\\
    c & \leftarrow c + K_\textrm{aczmarz}  \overline{S_{k}}^{\textrm{Tr}}\;\\
    v_k & \leftarrow v_k + \tilde{\alpha}  K_\textrm{aczmarz} \;
    \end{align}
    by the sweep order $k= 1,\hdots, K$ for measured data $u_k$.\;
    
    Projection onto the real space: 
    \begin{equation}
        c \leftarrow P_{+} [c]\;
    \end{equation}
    
    Soft thresholding:
    \begin{equation}
    c_m \leftarrow (c_m - \lambda)_+ - (- c_m - \lambda)_+\;
    \end{equation}
    for any $m=1,\hdots,M$.\;
 
  Continue\;
 }
 \caption{Kaczmarz algorithm for particle density with the $\ell^1$ and $\ell^2$ regularization terms}
 \label{algorithm:Kaczmarz:c}
\end{algorithm}

\subsubsection{Regularized Kaczmarz algorithm for $J^S (S)$}
\label{subsubsec:reg:kacamarz:S}

Let $Q_n$ be the $n$-th column of $Q$. Then we have
\begin{align}
    J^S (S)
    = 
    \sum_{k=1}^{K}
    | S_k c - u_k|^2
    +
    \sum_{k=1}^{K} 
    \sum_{n=1}^N
     | \tilde{\mu} (S_k Q_n - S_{\mathrm{calib}, k, n})|^2
    + \tilde{\gamma}^2 \|S - S_{\mathrm{mod}}\|^2_F,
\end{align}
By applying \eqref{eq:LS:L2:v:z:v:it} and \eqref{eq:LS:L2:v:z:z:it}, one Kaczmarz step for the $k$-th row $S_k$ is then either update by $c$
\begin{align}
\label{eq:kacamarz:S:c}
S_k^{j+1} 
    & = S_k^j 
    + \tau 
    \frac{u_k - S_k^j c - \tilde{\gamma} v_k^j}{\tilde{\gamma}^2 + \|c\|^2} 
      c^{\textrm{Tr}},\\
\label{eq:kacamarz:v:c}
v^{j+1}_k 
    & = v^j_k 
    + \tilde{\gamma} \tau \frac{u_k - S_k^j c - \tilde{\gamma} v^j_k}{\tilde{\gamma}^2 + \|c\|^2},
\end{align}
or update by $S_{\mathrm{calib}}$
\begin{align}
\label{eq:kacamarz:S:Scalib}
S_k^{j+1} 
    & = S_k^j 
    + \tau 
    \frac{\tilde{\mu} (S_{\mathrm{calib}, k , n} - S_k^j Q_n) - \tilde{\gamma} w^j_{k, n}}{\tilde{\gamma}^2 + \tilde{\mu}^2 \|Q_n\|^2} \tilde{\mu} \overline{Q_n}^{\textrm{Tr}},\\
\label{eq:kacamarz:v:Scalib}
w^{j+1}_{k, n} 
    & = w^j_{k, n} 
    + \tilde{\gamma} \tau 
    \frac{\tilde{\mu} (S_{\mathrm{calib}, k , n} - S_k^j Q_n) - \tilde{\gamma} w^j_{k, n}}{\tilde{\gamma}^2 + \tilde{\mu}^2 \|Q_n\|^2},
\end{align}
with $n=g_Q(j)$ where $g_Q:\N \rightarrow \{1,\hdots,N\}$ defines the order of  the columns of $Q$ during the iteration.
Note that we need two auxiliary variables $v^j \in \C^K$  and $w^j \in \C^{K \times N}$. The initial values must be $S^0 = S_{\mathrm{mod}}$ and $v^0 = 0$. One sweep of the Kaczmarz algorithm can be performed by updating any row $S_k$, $k=1,\hdots,K$, with $c$ by applying \eqref{eq:kacamarz:S:c} and \eqref{eq:kacamarz:v:c} and by updating with $S_{\mathrm{calib}}$ by applying \eqref{eq:kacamarz:S:Scalib} and \eqref{eq:kacamarz:v:Scalib} for all calibration data measured data $S_{\mathrm{calib}, k , n}$ for $n = 1, \cdots, N$, i.e., $g_Q(j)=(j \ \mathrm{mod} \ N) +1$. Again, the sweeping order of the Kaczmarz algorithm should not be overlooked. The regularized Kaczmarz algorithm for solving \eqref{eq:alter:min:S:penalized} is summarized in Algorithm~\ref{algorithm:Kaczmarz:S}.

\begin{algorithm}[h]
\SetAlgoLined
\KwResult{Iterative reconstruction for system matrix $S$ with given regularization parameters $\tilde{\gamma}$, $\tilde{\mu}$}
 initialization\;
 choose a sweep order for measured data\;
 choose a sweep order for calibration data\;
 set $S = S_{\mathrm{mod}} \in \C^{K \times M}$\;
 set $v = 0 \in \C^K$\;
 set $w = 0 \in C^{K \times N}$\;
 set $\tau = \tau_0$\;
 set $\eta = \eta_0$\;
 
 \While{convergence criteria do not meet}{
Kaczmarz sweep by $c$
    \begin{align}
    K_\textrm{aczmarz} & \leftarrow 
    \tau \frac{u_k - S_k c - \tilde{\gamma} v_k}{\tilde{\gamma}^2 + \|c\|^2},\\
    S_k & \leftarrow S_k + K_\textrm{aczmarz}  c^{\textrm{Tr}},\\
    v_k & \leftarrow v_k + \tilde{\gamma}  K_\textrm{aczmarz},
    \end{align}
    for any $k= 1,\cdots, K$\;
    
Kaczmarz sweep by $Q$
    \begin{align}
    K_\textrm{aczmarz} & \leftarrow 
    \tau \frac{\tilde{\mu} (S_{\mathrm{calib}, k , n} - S_k Q_n) - \tilde{\gamma} w_{k, n}}{\tilde{\gamma}^2 + \tilde{\mu}^2 \|Q_n\|^2},\\
    S_k & \leftarrow
    S_k + K_\textrm{aczmarz}  \tilde{\mu}  \overline{Q_n}^{\textrm{Tr}},\\
    w_{k, n} & \leftarrow w_{k, n} + \tilde{\gamma}  K_\textrm{aczmarz},
    \end{align}
    by the sweep order $n=1,\hdots,N$ for any $k= 1,\hdots, K$\;

  Continue\;
 }
 \caption{Kaczmarz algorithm for system matrix}
 \label{algorithm:Kaczmarz:S}
\end{algorithm}

%%%%%%%%%%%%%%%%%%%%%%%%%%%%%%%%%%%%%%%%%%%%%%%%%%%%%%%%%%%%%%%%%%%%%%%%%%%%%%%%%%%%%%%%%%%%%%%%%%%%%%

%%%%%%%%%%%%%%%%%%%%%%%%%%%%%%%%%%%%%%%%%%%%%%%%%%%%%%%%%%%%%%%%%%%%%%%%%%%%%%%%%%%%%%%%%%%%%%%%%%%%%%%

\section{Numerical results}
We illustrate the proposed method by  numerical examples including an academic test problem and the application to the imaging problem in MPI. 
Results using a standard integral operator are presented to highlight the characteristic behavior of the method for optimal parameter settings obtained via $\ell^2$-error or SSIM.
Furthermore, we provide numerical results for measured phantom data in MPI.

\subsection{Academic example - integral operator}
We investigate the behavior of the proposed method first in an academic test example.
Here, we choose a discretized standard problem defined by the integral operator in \eqref{eq:example_integral_operator} for $\Omega,I=(0,T)$, $T\in\N$, and $s(x,t)=\chi_{[0,x)}(t)$, i.e., the corresponding discretized bilinear operator $\tilde{B}:\R^M \times \R^{K\times M} \rightarrow \R^K$  is given by $\tilde{B}(S,c)=Sc$.
By choosing $K=M=T$, an equidistant grid, and piecewise constant basis functions, the true operator $S^\ast\in \R^{K\times M}$ is thus a lower triangular matrix filled with ones. Please not that $\cdot^\ast$ does not denote the adjoint matrix in this subsection. 
$S_\epsilon$ is obtained by adding a Gaussian matrix $\eta \in \R^{K\times M}$ with entries being i.i.d. and normally distributed with zero mean and standard deviation $\sigma$, i.e., $\eta_{i,j} \sim \mathcal{N}(0,\sigma)$ for any $i=1,\hdots,K$, $j=1,\hdots,M$. 
Analogously, we obtain the noisy measurement $u_\delta = u^\ast + \xi$ where the noise $\xi \in \R^K$ is also i.i.d. and normally distributed with zero mean and standard deviation $\sigma$.
The calibrated matrix $S_\mathrm{calib}\in \R^{K\times N}$, $N=\frac12 M$, on a coarser resolution is obtained using the operator $\tilde{P}$, respectively the matrix $Q\in \R^{M\times N}$ which encodes computing the sum of two consecutive columns of the high resolution matrix, i.e., $Q$ is a sparse matrix with two ones in each column.
Each column of the high resolution matrix contributes only to one single column of the low resolution system matrix. 
We thus use $S_{\mathrm{mod},\epsilon}=S_\epsilon =S^\ast +\eta$, $S_\mathrm{calib}=S^\ast Q$, and $u_\delta = u^\ast + \xi$ in \eqref{eq:joint:min:c:S}.
A solution for $M=50$ is obtained by minimizing the functional in \eqref{eq:joint:min:c:S} using the Kaczmarz-type method outlined in Section \ref{sec:algorithm}.
Here, in each outer iteration of the alternating minimization problem we us 500 Kaczmarz sweeps for $J^c(c)$ and 300 ones for $J^S(S)$.
In total 100 outer iterations are computed. 
In the following we compare 
\begin{itemize} 
\item the $(c,S)$-reconstruction, which is the joint reconstruction of $c$ and $S$ minimizing $J$,
\item the sole $c$-reconstruction minimizing $J^c$ for fixed $S=S^\ast$, which is the ideal and desired situation where the operator is accurately known, and
\item the sole $c$-reconstruction minimizing $J^c$ for fixed $S=S_\epsilon$, which represents the other end of the range where the noisy/inaccurate operator is considered as true.
\end{itemize}
In order to compare the different methods we performed a discrete regularization parameter search for $\gamma \in \{2^{-i}|i=0,\hdots, 18 \}$, $\mu \in \{2^{-i}|i=0,\hdots, 18 \}$, $\alpha \in \{2^{-i}|i=10,\hdots, 18  \}$, and $\lambda \in \{2^{-i}|i=1,\hdots, 12  \}$ (38988 parameter combinations in total) optimizing either the $\ell^2$-error or the SSIM when compared to the true solution $c^\ast$. 
For the $(c,S)$-reconstruction the last outer iterate is used to determine the optimal parameter.

The results for the $\ell^2$-error optimized regularization parameters for varying noise levels (in terms of standard deviation $\sigma$) are illustrated in Figure \ref{fig:integral_l2_recos} and Table \ref{tab:integral_l2_params}. 
In all three cases the $(c,S)$-reconstruction method improves the reconstruction quality quantitatively when compared to the $c$-reconstruction ($S=S_\epsilon$) which is the desired impact of the joint reconstruction.
The $(c,S)$-reconstructions tend to approximate the true solution $c^\ast$ quantitatively and qualitatively better but do not reach the same error level which is due the noisy nature of the operator and thus is not expected.
When decreasing the noise level $\sigma$, we can also observe a decrease of the $\ell^2$-error for the $c$-reconstruction ($S=S^\ast$) as predicted by the theory.
The $(c,S)$-reconstruction also follows this trend reaching the $\ell^2$-error values 0.3158, 0.1124, and 0.1027 in the last outer iterate for decreasing $\sigma$.

Analogous observations can be made when using SSIM to obtain optimal parameters which are illustrated in Figure \ref{fig:integral_ssim_recos} and Table \ref{tab:integral_ssim_params}.

\begin{figure}
    \centering
    \begin{tabular}{c|c|c}
    \rotatebox{90}{$\sigma=0.05$} & 
                    \includegraphics[width=0.45\textwidth]{figures/toy_example/l2_best_parameter_recos_sig00500.png} &
    \includegraphics[width=0.45\textwidth]{figures/toy_example/l2_best_parameter_quantitative_sig00500.png} \\

        \hline
            \rotatebox{90}{$\sigma=0.025$} & 
                            \includegraphics[width=0.45\textwidth]{figures/toy_example/l2_best_parameter_recos_sig00250.png} &
    \includegraphics[width=0.45\textwidth]{figures/toy_example/l2_best_parameter_quantitative_sig00250.png} \\

        \hline
            \rotatebox{90}{$\sigma=0.0125$} & 
                            \includegraphics[width=0.45\textwidth]{figures/toy_example/l2_best_parameter_recos_sig00125.png} &
    \includegraphics[width=0.45\textwidth]{figures/toy_example/l2_best_parameter_quantitative_sig00125.png} \\

    \end{tabular}

    \caption{Phantom reconstructions (left) and $\ell^2$-error (right) of proposed method for decreasing standard deviation from top to bottom. Regularization parameters are chosen such that the $\ell^2$-(reconstruction) error is minimized. For $(c,S)$-reconstruction method the last outer iteration is used to determine the regularization parameters, which can be found in Table \ref{tab:integral_l2_params}. }
    \label{fig:integral_l2_recos}
\end{figure}

\begin{figure}
    \centering
    \begin{tabular}{c|c|c}
    \rotatebox{90}{$\sigma=0.05$} & 
                    \includegraphics[width=0.45\textwidth]{figures/toy_example/ssim_best_parameter_recos_sig00500.png} &
    \includegraphics[width=0.45\textwidth]{figures/toy_example/ssim_best_parameter_quantitative_sig00500.png} \\

        \hline
            \rotatebox{90}{$\sigma=0.025$} & 
                            \includegraphics[width=0.45\textwidth]{figures/toy_example/ssim_best_parameter_recos_sig00250.png} &
    \includegraphics[width=0.45\textwidth]{figures/toy_example/ssim_best_parameter_quantitative_sig00250.png} \\

        \hline
            \rotatebox{90}{$\sigma=0.0125$} & 
                            \includegraphics[width=0.45\textwidth]{figures/toy_example/ssim_best_parameter_recos_sig00125.png} &
    \includegraphics[width=0.45\textwidth]{figures/toy_example/ssim_best_parameter_quantitative_sig00125.png} \\

    \end{tabular}
    \caption{Phantom reconstructions (left) and SSIM-measure (right) of proposed method for decreasing standard deviation from top to bottom. Regularization parameters are chosen such that the $1-SSIM$ is minimized. For $(c,S)$-reconstruction method the last outer iteration is used to determine the regularization parameters, which can be found in Table \ref{tab:integral_ssim_params} }
    \label{fig:integral_ssim_recos}

\end{figure}

\begin{table}
    \centering
    \begin{tabular}{c|l|c|c|c|c}
    \hline
 $\sigma$ & Method  & $\gamma$ & $\mu$ & $\alpha$ &$\lambda$ \\
    \hline
 &  $(c,S)$ - rec.       & $0.25$ & $1.0$ & $1.53 \times 10^{-5}$ & $4.88 \times 10^{-4}$ \\
 0.05 &  $c$ - rec., $S=S_\epsilon$    & -- & -- & $6.10\times 10^{-5}$  & $9.77\times 10^{-4}$  \\
 &   $c$ - rec., $S=S^\ast$    & -- & -- & $3.05\times 10^{-5}$  & $3.9\times 10^{-3}$  \\
    \hline
 &   $(c,S)$ - rec.       & $0.50$ & $1.0$ & $3.82\times 10^{-6}$ & $4.88\times 10^{-4}$\\
  0.025 & $c$ - rec., $S=S_\epsilon$    & -- & -- & $3.05\times 10^{-5}$ & $3.9 \times 10^{-3}$ \\
   & $c$ - rec., $S=S^\ast$    & -- & -- & $7.6\times 10^{-6}$ & $2.0 \times 10^{-3}$ \\
    \hline
 &   $(c,S)$ - rec.       & $0.50$ & $1.0$ & $3.82\times 10^{-6}$ & $2.44\times 10^{-4}$\\
 0.0125  & $c$ - rec., $S=S_\epsilon$    & -- & -- & $7.63\times 10^{-6}$ & $9.77\times 10^{-4}$ \\
  &  $c$ - rec., $S=S^\ast$    & -- & -- & $3.82\times 10^{-6}$ & $2.44\times 10^{-4}$\\
    \hline
    \end{tabular}
    \caption{Optimal regularization parameters with respect to $\ell^2$-(reconstruction) error.}
    \label{tab:integral_l2_params}
\end{table}

\begin{table}
    \centering
    \begin{tabular}{c|l|c|c|c|c}
    \hline
 $\sigma$ & Method  & $\gamma$ & $\mu$ & $\alpha$ &$\lambda$ \\
    \hline
  & $(c,S)$ - rec.       & $0.5$ & $1.0$ & $1.53 \times 10^{-5}$ & $9.77 \times 10^{-4}$ \\
 0.05 &  $c$ - rec., $S=S_\epsilon$    & -- & -- & $6.10\times 10^{-5}$  & $3.9\times 10^{-3}$  \\
  &  $c$ - rec., $S=S^\ast$    & -- & -- & $3.05\times 10^{-5}$  & $3.9\times 10^{-3}$  \\
    \hline
  & $(c,S)$ - rec.       & $0.50$ & $1.0$ & $3.82\times 10^{-6}$ & $4.88\times 10^{-4}$\\
 0.025 &  $c$ - rec., $S=S_\epsilon$    & -- & -- & $3.05\times 10^{-5}$ & $7.8 \times 10^{-3}$ \\
  &  $c$ - rec., $S=S^\ast$    & -- & -- & $7.63\times 10^{-6}$ & $2.0 \times 10^{-3}$ \\
    \hline
  & $(c,S)$ - rec.       & $1.0$ & $1.0$ & $3.82\times 10^{-6}$ & $2.44\times 10^{-4}$\\
 0.0125 &  $c$ - rec., $S=S_\epsilon$    & -- & -- & $3.82\times 10^{-6}$ & $9.77\times 10^{-4}$ \\
  &  $c$ - rec., $S=S^\ast$    & -- & -- & $7.63\times 10^{-6}$ & $2.44\times 10^{-4}$\\
  \hline
    \end{tabular}
    \caption{Optimal regularization parameters with respect to $1-SSIM$.}
    \label{tab:integral_ssim_params}--
\end{table}

\subsection{Application to magnetic particle imaging}

As a second example we consider MPI which also motivated the general  problem setup of the present work.
Precisely modeling MPI, resp.\ formulating a physically accurate integral kernel for image reconstruction is still an unsolved problem. Various modeling aspects, e.g., the magnetization dynamics and particle-particle interactions, make it a challenging task such that the integral kernel is commonly determined in a time-consuming calibration procedure. For further information on the modeling aspects, the interested reader is referred to the survey paper \cite{Kluth2018b} as well as to the review article \cite{Knopp2017} for further details on the MPI methodology. 
The numerical results are obtained from the recently improved modeled approach in \cite{KluthSzwargulskiKnopp2019} and the real data example in a 2D field-free-point (FFP) setup therein.
Thus, the setup is as follows: 
     $S_\mathrm{mod}\in \C^{K \times M}$, respectively $S_{\mathrm{mod},\epsilon}$ for one particular $\epsilon$ is given by the improved model B3 in \cite{KluthSzwargulskiKnopp2019} which exploits a space-dependent anisotropy in a N\'{e}el rotation model for ensembles of nanoparticles. The authors fitted the analog filter function to calibration measurements in a previous step. For this work we exploited this model to obtain the refined resolution of $M=60 \times 60$ voxels corresponding to voxels of size 0.5mm $\times$ 0.5mm $\times$ 1mm. 
      
     In the measurement process of the time-dependent voltage signal, we have $K_\mathrm{max}= 2 \times 817$ frequencies available after applying the Fourier transform (2 channels  $\times$ the entire available spectrum). To obtain reasonable reconstructions the frequencies are restricted to a subset in a preprocessing step (a combination of SNR and NRMSE thresholding, see \cite{KluthSzwargulskiKnopp2019} for further details) resulting in $K\leq K_\mathrm{max}$.
     $S_\mathrm{calib}\in \R^{K\times N}$ is obtained in a calibration procedure with a delta sample of size 1mm $\times$ 1mm $\times$ 1mm resulting in $30 \times 30=N$ voxels. $K$ is as described for $S_\mathrm{mod}$.
     
     The real phantom consisted of 3 capillary filled with tracer having a concentration of 250 mmol/l. In total a tracer volume of $2.19 \times 10^{-5}$l was used. For a photo  of the phantom we refer to \cite[Fig. 6]{KluthSzwargulskiKnopp2019}. $u_\delta\in \C^K$ is thus the MPI measurement after applying the previously described frequency selection.
     
     Solutions are obtained by minimizing the functional in \eqref{eq:joint:min:c:S} using the Kaczmarz-type method outlined in Section \ref{sec:algorithm}.
We use 10 outer iterations each comprising 75 Kaczmarz sweeps for $J^c(c)$ and 20 ones for $J^S(S)$.
Due to the missing ground truth for the system matrix, we cannot compare any reconstruction to the $c$-reconstruction using $S=S^\ast$. As an alternative we compare them to low resolution reconstructions using $S=S_\mathrm{calib}$ which also represents the standard reconstruction in MPI. 
Thus, in the following we compare 
\begin{itemize} 
\item the $(c,S)$-reconstruction, which is again the joint reconstruction of $c$ and $S$ minimizing $J$,
\item the sole $c$-reconstruction minimizing $J^c$ for fixed $S=S_\mathrm{mod}$, which represents the pure model-based reconstruction on the refined resolution, and
\item the sole $c$-reconstruction minimizing $J^c$ for fixed $S=S_\mathrm{calib}$, which represents the standard reconstruction method in MPI.
\end{itemize}
For comparison we computed reconstructions for various parameter combinations, i.e., $\gamma \in \{10^{-i}|i=0,\hdots, 5 \}$, $\mu \in \{10^{-i}|i=0,\hdots, 5 \}$, $\alpha \in \{2^{-i}|i=28,30,\hdots, 48  \}$, and $\lambda \in \{2^{-i}|i=3,\hdots, 10  \}$, and as a ground truth phantom is not available for computing an image quality measure, we exploited the quantitative information on the volume of the used tracer material. We sorted the reconstructions by their absolute deviation to the desired volume in an ascending order which are illustrated in Figure \ref{fig:MPI_reconstructions} and Table \ref{tab:MPI_reg_parameters}.

For the $(c,S)$-reconstructions we can observe that the capillary reconstructions become sharper when increasing the number of outer iterations. Two main kinds of reconstructions can be found for the $(c,S)$-reconstruction in the illustrated results. 
Rows 1-4 result from a large $\mu$ while rows 5-8 are based on a small $\mu$. This shows the influence of the reconstructed system matrix on the reconstruction when changing the influence of the term including the additional information on the operator $P$. 
For large $\mu$ (rows 1-4) and thus a smaller influence of the regularization term including $S_\mathrm{mod}$, we can observe high frequent noise patterns in the reconstructions. 
When decreasing $\mu$ we obtain improved reconstructions (rows 5-8), which illustrates the importance of additional a priori information in the reconstruction process, either provided via $S_\mathrm{mod}$ or another penalty term ${R}_s$ which may include further a priori information on $S$. 
For pure $c$-reconstruction using $S=S_\mathrm{mod}$ the best reconstructions (by visual judgement) can be found in rows 1 and 6. These reconstructions are of similar quality when compared to the $(c,S)$-reconstruction, while the latter one results in slightly sharper reconstructions. 
Compared to the $c$-reconstruction ($S=S_\mathrm{calib}$) on the coarser grid, both high resolution reconstruction methods improve the separation of the capillaries in the phantom.

    %Regarding the phantom measurements we distinguish the two following examples
    %\begin{itemize}
    %\item $u_\delta\in \C^K$ is obtained from a simulated phantom $c^\ast \in \R^M$ being a noisy version of $u^\ast=S_\mathrm{mod} c^\ast$.
     %\item $u_\delta\in \C^K$ is a measurement obtained from a real phantom measurement of 3 capillary.   
%\end{itemize}

\begin{figure}
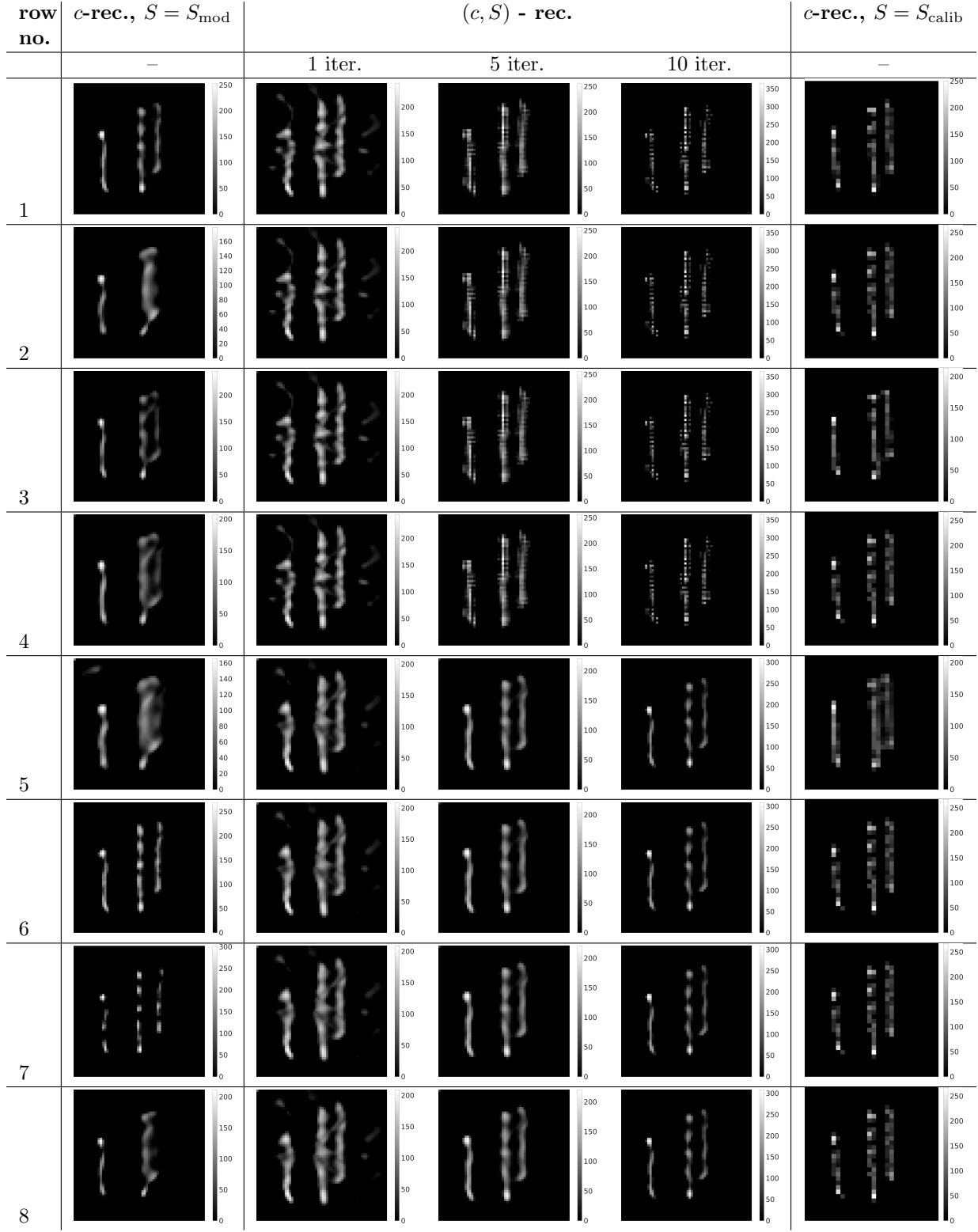

    \centering
\begin{tabular}{p{0.5cm}|c|ccc|c}
\textbf{row no.} &\textbf{$c$-rec., $S=S_\mathrm{mod}$}& \multicolumn{3}{c|}{\textbf{$(c,S)$ - rec.}} & \textbf{$c$-rec., $S=S_\mathrm{calib}$} \\
\hline 
& -- & 1 iter. & 5 iter. & 10 iter. & -- \\
\hline
 1 &
\includegraphics[width=0.16\textwidth]{figures/MPI_example/pics2/S_mod/image_sortind1_paramind901.png}&
\includegraphics[width=0.16\textwidth]{figures/MPI_example/pics2/S_rec/image_sortind1_iter1_paramind1615.png}&
\includegraphics[width=0.16\textwidth]{figures/MPI_example/pics2/S_rec/image_sortind1_iter5_paramind1615.png}&
\includegraphics[width=0.16\textwidth]{figures/MPI_example/pics2/S_rec/image_sortind1_iter10_paramind1615.png}&
\includegraphics[width=0.16\textwidth]{figures/MPI_example/pics2/S_calib/image_sortind1_paramind613.png}\\

\hline
2 &
\includegraphics[width=0.16\textwidth]{figures/MPI_example/pics2/S_mod/image_sortind37_paramind73.png}&
\includegraphics[width=0.16\textwidth]{figures/MPI_example/pics2/S_rec/image_sortind2_iter1_paramind1609.png}&
\includegraphics[width=0.16\textwidth]{figures/MPI_example/pics2/S_rec/image_sortind2_iter5_paramind1609.png}&
\includegraphics[width=0.16\textwidth]{figures/MPI_example/pics2/S_rec/image_sortind2_iter10_paramind1609.png}&
\includegraphics[width=0.16\textwidth]{figures/MPI_example/pics2/S_calib/image_sortind37_paramind901.png}\\

\hline
3 &
\includegraphics[width=0.16\textwidth]{figures/MPI_example/pics2/S_mod/image_sortind73_paramind613.png}&
\includegraphics[width=0.16\textwidth]{figures/MPI_example/pics2/S_rec/image_sortind3_iter1_paramind1603.png}&
\includegraphics[width=0.16\textwidth]{figures/MPI_example/pics2/S_rec/image_sortind3_iter5_paramind1603.png}&
\includegraphics[width=0.16\textwidth]{figures/MPI_example/pics2/S_rec/image_sortind3_iter10_paramind1603.png}&
\includegraphics[width=0.16\textwidth]{figures/MPI_example/pics2/S_calib/image_sortind73_paramind325.png}\\

\hline
4 &
\includegraphics[width=0.16\textwidth]{figures/MPI_example/pics2/S_mod/image_sortind109_paramind361.png}&
\includegraphics[width=0.16\textwidth]{figures/MPI_example/pics2/S_rec/image_sortind4_iter1_paramind1597.png}&
\includegraphics[width=0.16\textwidth]{figures/MPI_example/pics2/S_rec/image_sortind4_iter5_paramind1597.png}&
\includegraphics[width=0.16\textwidth]{figures/MPI_example/pics2/S_rec/image_sortind4_iter10_paramind1597.png}&
\includegraphics[width=0.16\textwidth]{figures/MPI_example/pics2/S_calib/image_sortind109_paramind1189.png}\\

\hline
5 &
\includegraphics[width=0.16\textwidth]{figures/MPI_example/pics2/S_mod/image_sortind145_paramind109.png}&
\includegraphics[width=0.16\textwidth]{figures/MPI_example/pics2/S_rec/image_sortind5_iter1_paramind1044.png}&
\includegraphics[width=0.16\textwidth]{figures/MPI_example/pics2/S_rec/image_sortind5_iter5_paramind1044.png}&
\includegraphics[width=0.16\textwidth]{figures/MPI_example/pics2/S_rec/image_sortind5_iter10_paramind1044.png}&
\includegraphics[width=0.16\textwidth]{figures/MPI_example/pics2/S_calib/image_sortind145_paramind73.png}\\

\hline
6 &
\includegraphics[width=0.16\textwidth]{figures/MPI_example/pics2/S_mod/image_sortind181_paramind1189.png}&
\includegraphics[width=0.16\textwidth]{figures/MPI_example/pics2/S_rec/image_sortind6_iter1_paramind1038.png}&
\includegraphics[width=0.16\textwidth]{figures/MPI_example/pics2/S_rec/image_sortind6_iter5_paramind1038.png}&
\includegraphics[width=0.16\textwidth]{figures/MPI_example/pics2/S_rec/image_sortind6_iter10_paramind1038.png}&
\includegraphics[width=0.16\textwidth]{figures/MPI_example/pics2/S_calib/image_sortind181_paramind1477.png}\\

\hline
7 &
\includegraphics[width=0.16\textwidth]{figures/MPI_example/pics2/S_mod/image_sortind217_paramind2017.png}&
\includegraphics[width=0.16\textwidth]{figures/MPI_example/pics2/S_rec/image_sortind7_iter1_paramind1032.png}&
\includegraphics[width=0.16\textwidth]{figures/MPI_example/pics2/S_rec/image_sortind7_iter5_paramind1032.png}&
\includegraphics[width=0.16\textwidth]{figures/MPI_example/pics2/S_rec/image_sortind7_iter10_paramind1032.png}&
\includegraphics[width=0.16\textwidth]{figures/MPI_example/pics2/S_calib/image_sortind217_paramind1765.png}\\

\hline
8 &
\includegraphics[width=0.16\textwidth]{figures/MPI_example/pics2/S_mod/image_sortind253_paramind325.png}&
\includegraphics[width=0.16\textwidth]{figures/MPI_example/pics2/S_rec/image_sortind8_iter1_paramind1043.png}&
\includegraphics[width=0.16\textwidth]{figures/MPI_example/pics2/S_rec/image_sortind8_iter5_paramind1043.png}&
\includegraphics[width=0.16\textwidth]{figures/MPI_example/pics2/S_rec/image_sortind8_iter10_paramind1043.png}&
\includegraphics[width=0.16\textwidth]{figures/MPI_example/pics2/S_calib/image_sortind253_paramind2053.png}\\

%\hline
%\includegraphics[width=0.17\textwidth]{figures/MPI_example/pics2/S_mod/image_sortind289_paramind1729.png}&
%\includegraphics[width=0.17\textwidth]{figures/MPI_example/pics2/S_rec/image_sortind9_iter1_paramind1037.png}&
%\includegraphics[width=0.17\textwidth]{figures/MPI_example/pics2/S_rec/image_sortind9_iter5_paramind1037.png}&
%\includegraphics[width=0.17\textwidth]{figures/MPI_example/pics2/S_rec/image_sortind9_iter10_paramind1037.png}&
%\includegraphics[width=0.17\textwidth]{figures/MPI_example/pics2/S_calib/image_sortind289_paramind37.png}\\
%
%\hline
%\includegraphics[width=0.17\textwidth]{figures/MPI_example/pics2/S_mod/image_sortind325_paramind1441.png}&
%\includegraphics[width=0.17\textwidth]{figures/MPI_example/pics2/S_rec/image_sortind10_iter1_paramind1031.png}&
%\includegraphics[width=0.17\textwidth]{figures/MPI_example/pics2/S_rec/image_sortind10_iter5_paramind1031.png}&
%\includegraphics[width=0.17\textwidth]{figures/MPI_example/pics2/S_rec/image_sortind10_iter10_paramind1031.png}&
%\includegraphics[width=0.17\textwidth]{figures/MPI_example/pics2/S_calib/image_sortind325_paramind361.png}\\
\end{tabular}
    \caption{Concentration reconstructions for the phantom consisting of 3 glass capillaries (see \cite[Fig. 6]{KluthSzwargulskiKnopp2019}) are presented. All reconstructions are in mmol/l. The corresponding regularization parameters can be found in Table \ref{tab:MPI_reg_parameters}. Reconstructions are sorted by increasing absolute deviation from the expected tracer volume from top to bottom. }
    \label{fig:MPI_reconstructions}
\end{figure}

\begin{table}
\begin{tabular}{l|c|c}
    \multicolumn{3}{c}{\textbf{$c$-rec., $S=S_\mathrm{mod}$}} \\
    \hline
   \textbf{row no.}  & $\alpha$ &$\lambda$ \\
   \hline
   1 & $5.82 \times 10^{-11}$ & $6.25 \times 10^{-2}$ \\
      2 & $3.73 \times 10^{-9}$ & $3.12 \times 10^{-2}$ \\
       3 & $2.33 \times 10^{-10}$ & $6.25 \times 10^{-2}$ \\
             4 & $9.31 \times 10^{-10}$ & $3.12 \times 10^{-2}$ \\
                   5 & $3.73 \times 10^{-9}$ & $1.56 \times 10^{-2}$ \\
                    6 & $1.46 \times 10^{-11}$ & $6.25 \times 10^{-2}$ \\
                                        7 & $2.27 \times 10^{-13}$ & $1.25 \times 10^{-1}$ \\
                                            8 & $9.31 \times 10^{-10}$ & $6.25 \times 10^{-2}$ \\
                   
    \hline
\end{tabular}
\hspace{0.5cm}
\begin{tabular}{l|c|c}
    \multicolumn{3}{c}{\textbf{$c$-rec., $S=S_\mathrm{calib}$}} \\
    \hline
   \textbf{row no.}  & $\alpha$ &$\lambda$ \\
   \hline
   1 & $2.33 \times 10^{-10}$ & $6.25 \times 10^{-2}$ \\
      2 & $5.82 \times 10^{-11}$ & $6.25 \times 10^{-2}$ \\
      3 & $9.31 \times 10^{-10}$ & $1.25 \times 10^{-1}$ \\
          4 & $1.46 \times 10^{-11}$ & $6.25 \times 10^{-2}$ \\   
                    5 & $3.73 \times 10^{-9}$ & $3.12 \times 10^{-2}$ \\  
                              6 & $3.64 \times 10^{-12}$ & $6.25 \times 10^{-2}$ \\   
                                        7 & $9.09 \times 10^{-13}$ & $6.25 \times 10^{-2}$ \\   
                                                  8 & $2.27\times 10^{-13}$ & $6.25 \times 10^{-2}$ \\   
    \hline
\end{tabular}
\\ \vspace{0.5cm} \\
    \centering
    \begin{tabular}{l|c|c|c|c}
    \multicolumn{5}{c}{\textbf{$(c,S)$-rec.}} \\
    \hline
   \textbf{row no.}  & $\gamma$ & $\mu$ & $\alpha$ &$\lambda$ \\
    \hline
  1     & $1.0 \times 10^{-5}$ & $1.0$ & $3.64 \times 10^{-12}$ & $7.8\times 10^{-3}$ \\
    2    & $1.0 \times 10^{-4}$ & $1.0$ & $3.64 \times 10^{-12}$ & $7.8\times 10^{-3}$ \\
    3    & $1.0 \times 10^{-3}$ & $1.0$ & $3.64 \times 10^{-12}$ & $7.8\times 10^{-3}$ \\
        4    & $1.0 \times 10^{-2}$ & $1.0$ & $3.64 \times 10^{-12}$ & $7.8\times 10^{-3}$ \\
                5    & $1.0 \times 10^{-5}$ & $1.0\times 10^{-5}$ & $5.82 \times 10^{-11}$ & $7.8\times 10^{-3}$ \\
                6    & $1.0 \times 10^{-4}$ & $1.0\times 10^{-5}$ & $5.82 \times 10^{-11}$ & $7.8\times 10^{-3}$ \\
                                7    & $1.0 \times 10^{-3}$ & $1.0\times 10^{-5}$ & $5.82 \times 10^{-11}$ & $7.8\times 10^{-3}$ \\
                                                                8    & $1.0 \times 10^{-4}$ & $1.0\times 10^{-4}$ & $5.82 \times 10^{-11}$ & $7.8\times 10^{-3}$ \\
    \end{tabular}
    \caption{Regularization parameters used to obtain the image reconstructions in Figure \ref{fig:MPI_reconstructions}}
    \label{tab:MPI_reg_parameters}
\end{table}

\section{Discussions and concluding remarks}
In this work, we considered a hybrid approach to obtain high resolution reconstructions in imaging applications by explicitly taking into account parameters of the imaging operators in the reconstruction process.
The present approach combines incomplete infinite- or high-dimensional model information (type A) with high-quality but finite-/lower-dimensional information (type B). Motivated by the application of interest, MPI, we analyzed a general Hilbert space setup for bilinear operators, i.e., a linear imaging operator as well as a linear dependence on the parameters of the imaging operator. 
Furthermore we derived a Kaczmarz-type algorithm to obtain a solution for the joint reconstruction problem and tested it in an academic problem as well as in the imaging application of MPI.

The theoretical findings in terms of stability, convergence and convergence rates are in line with the findings in \cite{Bleyer:2013cw} where the authors considered bilinear operators fulfilling stronger assumptions and a special case of the functional of the present work. In contrast to the work \cite{Bleyer:2013cw} we exploited the general results for nonlinear operator equations in \cite{Hofmann:2007us} by addressing the product space setup of the joint reconstruction problem.

An algorithmic solution to the joint reconstruction problem is derived using an alternating minimization approach as analogously formulated for a special case of the functional \cite{Bleyer_2015}. Motivated by the application of interest, Kaczmarz-type algorithms are exploited to minimize the respective functional for the image and the parameter reconstruction. The extension taking into account the link between low and high resolution system matrices is straight forward and it showed to be successful as illustrated by the academic test example.         
Furthermore, the numerical results for MPI illustrate the potential of the present approach to exploit multiple information sources to comprise best of both worlds in hybrid methods.

The obtained results of this work build the basis for several directions of research in different disciplines. 
In the context of MPI the present work motivates different future research questions. In particular, an experimental study where ground truth of the phantom is available is desirable which also enables the investigation of suitable image quality measures.

From a theoretical as well as an application point of view the extension to a nonlinear dependence on the parameters of the imaging operator is highly desirable. 
The interpretation of the linear operator $P$ linking the low and high resolution is then not as intuitive as in the present setup anymore. 
It can then rather be seen as a joint model calibration and image reconstruction problem. 
An intuitive further direction of research is the treatment of dynamically changing model parameters in time-dependent image reconstruction. In MPI, for example, the tracer material changes its magnetization behavior if the nanoparticles are immobilized \cite{Kluth2018a,M_ddel_2018}, e.g., if the nanoparticles are blocked while labeling certain types of tissue \cite{Wu2019}. %which also enables another dimension of contrast \cite{M_ddel_2018}. 

\section*{Acknowledgements}
The authors would like to thank T. Knopp  from University Medical Center Hamburg-Eppendorf for sharing the MPI data used in this work.
T. Kluth acknowledges funding by the Deutsche Forschungsgemeinschaft
(DFG, German Research Foundation) - project number 281474342/GRK2224/1 ``Pi$^3$ : Parameter Identification
- Analysis, Algorithms, Applications''. % and support by the project ``MPI$^2$'' funded by the Federal Ministry of Education and Research (BMBF, project no. 05M16LBA).
C. Bathke acknowledges funding by the project ``MPI$^2$'' funded by the Federal Ministry of Education and Research (BMBF, project no.
05M16LBA). P. Maass acknowledges the support by the Federal Ministry of Education and Research (BMBF project no. 05M20LBC, HYDAMO). M. Jiang acknowledges the funding by  the National Science Foundation of China (11961141007, 61520106004), and the friendly hospitality of Prof. Peter Maass during his sabbatical visit to University of Bremen.

\bibliographystyle{abbrv}
\bibliography{literature}
%\newpage
 \appendix

\section{Proofs of Section \ref{sec:basic_properties}}
\label{app:appendix1}
We now provide a proof of Lemma \ref{lemma:projectionPenalty}, which is a classical result from linear operator theory.
\begin{proof}
The definition of the functional $T$ and the properties of $P$ yield that $T$ is proper. Due to the linearity of $P$ and the triangle inequality, it holds for any $a\in (0,1)$ and $x,y \in Y$
	\begin{align*}
		f((1-a)x+ay) &= \| P((1-a)x+ay)- s_{\mathrm{calib}}\| \\
		&= \| P ((1-a)x) - (1-a) s_{\mathrm{calib}}+ P(ay)- a s_{\mathrm{calib}}\| \\
		&\leq  (1-a)\|P (x) - s_{\mathrm{calib}}\|+ a \|P(y)-  s_{\mathrm{calib}}\| \\
		&= (1-a)f(x) +a f(y)
	\end{align*}
	with $f(s):= \| P(s)-s_{\mathrm{calib}}\|$. Since $f$ is convex, also $f^2\equiv T$ is convex. The weak lower semi-continuity of the functional $T$ follows from the weak lower semi-continuity of the Hilbert space norm and the continuity of $P$.
\end{proof}
We now include a proof of Lemma \ref{lemma:FrechetDerivative1}, which also follows by standard arguments.
	\begin{proof}
	Using the linearity of $B$ in both arguments, we obtain
	\begin{align*}
		\lVert &B(c+x,s+y) - B(c,s) - B(c,y)-B(x,s) \rVert  = \lVert B(x,y) \rVert \leq C \lVert x \rVert \lVert y \rVert \leq \frac{C}{2} \lVert (x,y) \rVert^2 %\\
	%	&\leq C \lVert x \rVert \lVert y \rVert  + \frac{C}{2} (\lVert x \rVert -\lVert y\rVert)^2 = C \lVert x \rVert \lVert y \rVert  + \frac{C}{2} (\lVert x \rVert^2 +\lVert y\rVert^2) - C \lVert x \rVert \lVert y \rVert = \frac{C}{2} \lVert (x,y) \rVert^2.
	\end{align*}
It thus follows
	\begin{align*}
		\lim_{\lVert (x,y) \rVert_{X\times Y} \to 0}
		\frac{\lVert B(c+x,s+y)-B(c,s)-B^\prime(c,s)(x,y)\rVert_Z}{\lVert (x,y) \rVert_{X\times Y}} &= 0
	\end{align*}	
	for  $B^\prime(c,s)(x,y) = B(x,s) + B(c,y)$.
		\end{proof}

\section{Proofs of Section \ref{sec:setupA2}}
\label{app:appendix2}

Proof of Theorem \ref{thm:existence_etc_2}.
\begin{proof}
We consider the tuple $(c,s)\in H_1\coloneqq X \times Y$ where $H_1$ also is a Hilbert space equipped with the canonical inner product
$	\langle x_1,x_2 \rangle_{H_1} \coloneqq \langle c_1,c_2 \rangle_X + \langle s_1,s_2 \rangle_Y$.
The discrepancy term in $\J$ contains two terms such that we similarly consider the space $H_2 \coloneqq Z \times Y$ and the data tuples $y_{\eta(\delta,\epsilon)} = (u_\delta,\sqrt{\gamma} s_{\mathrm{mod},\epsilon})$ and $y^\ast = (u^\ast, \sqrt{\gamma} s^\ast)$ with noise estimate 
\begin{equation}
	\lVert y_{\eta(\delta,\epsilon)} - y^\ast \rVert^2_{H_2} = \lVert u_\delta - u^\ast \rVert^2_Z + \lVert s_{\mathrm{mod},\epsilon} - s^\ast \rVert^2_Y \leq 
	\delta^2 + \gamma \epsilon^2 \eqqcolon \eta^2(\delta,\epsilon).
\end{equation}
Considering the operator $F: H_1 \rightarrow H_2$, $(c,s) \mapsto (B(c,s), \sqrt{\gamma} s)$, which is bilinear and weakly continuous in the first component and linear and weakly continuous in the second one.
%\begin{align}
%	F \colon H_1 &\rightarrow H_2 \\
%	(c,s) &\mapsto (B(c,s), \sqrt{\gamma} s),
%\end{align}
%With these definitions we can fuse the two discrepancy terms into a single one 
It thus holds
\begin{equation}
	\lVert F(c,s) - y_{\eta(\delta,\epsilon)} \rVert^2_{H_1}
	= \lVert B(c,s) - u_\delta \rVert^2_Z + \gamma \lVert s - s_{\mathrm{mod},\epsilon}\rVert^2_Y
\end{equation}
which allows to rewrite the functional $\J$ as
\begin{align} \label{eq:scherzer}
	J^\eta_\alpha (c,s)
		&:= \frac{1}{2} \| F(c,s)-y_{\eta(\delta,\epsilon)} \|_Z^2 + \alpha {\mathcal{R}_2}(c,s) \\
		\text{ and let }\tilde{\mathcal{R}}(c,s) &:=
	 \penc(c) + \frac{\nu_2}{2} \|P(s)-s_\mathrm{calib} \|^2_Y + \nu_1 \pens(s).
\end{align}
As the assertions (i) and (ii) would follow immediately from \cite[Theorems 3.1, 3.2]{Hofmann:2007us}, it is sufficient to show the following \cite[Assumption 2.1]{Hofmann:2007us}:
	\begin{itemize}
	\item[(a)] $H_1$ and $H_2$ are Banach spaces with associated topologies $\tau_{H_1}$ and $\tau_{H_2}$ weaker than the norm topologies.
	\item[(b)] $\|\cdot\|_{H_2}$ is sequentially lower semi-continuous with respect to $\tau_{H_2}$.
	\item[(c)] $F\colon \D(F) \subset H_1 \rightarrow H_2$ is continuous with respect to the topologies $\tau_{H_1}$ and $\tau_{H_2}$.
	\item[(d)] $\tilde{\mathcal{R}}\colon H_1 \rightarrow [0,\infty]$ is proper, convex and $\tau_{H_1}$ lower semi-continuous.
	\item[(e)] $\D(F)$ is closed with respect to $\tau_{H_1}$ and $\D\coloneqq \D(F)\cap\D(\tilde{R})\neq\emptyset$.
	\item[(f)] For every $\alpha >0$ and $M>0$ the level sets
	\begin{equation}
		\mathcal{M}_\alpha(M) \coloneqq \{(c,s)\in \D \colon J^\eta_\alpha(c,s) \leq M \}
	\end{equation}
	are $\tau_{H_1}$ sequentially compact.	
	\end{itemize}
As $X,Y$ and $Z$ are Hilbert spaces with weak topologies, $H_1$ and $H_2$ are also Hilbert spaces with weak topologies such that (a) and (b) are fulfilled. 
From weak continuity of $B$ and weak continuity in the second component of $F$ it follows (c). 
The definition of $\mathcal{R}_c$ and $\mathcal{R}_s$ and Lemma \ref{lemma:projectionPenalty} yield (d).
Since $\D(F)=H_1$ and the penalty terms are proper, (e) is fulfilled as well.
(f) is equivalent to the weak lower semi-continuity of the whole functional, which results from (d), the weak continuity of $B$, and that the discrepancy term $D_2$ is defined by weakly continuous Hilbert space norms.

Assertion (iii) can be proved analogously to \cite[Theorems 3.5]{Hofmann:2007us} with some minor adaptations taking into account the properties of the parameter sequences $\alpha_j,\beta_j,\mu_j$ which yield $\alpha_j \tilde{\mathcal{R}}(c,s) \leq (\alpha_j,\beta_j,\mu_j)^t \mathcal{R}_2(c,s)$ for any $(c,s)\in H_1$. With $\eta_j:=\eta(\delta_j,\epsilon_j)$ from the definition of $(c^j,s^j)_j$ it follows
\begin{equation*}
    \frac12 \|F(c^j,s^j)-y_{\eta_j}\|_{H_2}^2 + (\alpha_j,\beta_j,\mu_j)^t \mathcal{R}_2(c^j,s^j) \leq \frac12 \eta_j^2 + (\alpha_j,\beta_j,\mu_j)^t \mathcal{R}_2(c^\ast,s^\ast) 
\end{equation*}
which shows $\lim_{j\to \infty} \| F(c^j,s^j) - y^\ast \|_{H_2}=0$ and yields 
\begin{equation*}
    \tilde{\mathcal{R}}(c^j,s^j) \leq \frac{\eta_j^2}{2\alpha_j} + \frac{(\alpha_j,\beta_j,\mu_j)^t\mathcal{R}_2(c^\ast,s^\ast)}{\alpha_j}
\end{equation*}
implying $\limsup_{j\to \infty}{\tilde{\mathcal{R}}(c^j,s^j)}\leq \limsup_{j\to \infty}{(1,\beta_j/\alpha_j,\mu_j/\alpha_j)^t\mathcal{R}_2(c^j,s^j)} \leq \tilde{\mathcal{R}}(c^\ast,s^\ast) $.
We thus obtain
\begin{align*}
    \limsup_{j\to \infty}{\left(\frac12 \|F(c^j,s^j)-y_{\eta_j}\|_{H_2}^2 + \alpha_0 \tilde{\mathcal{R}}(c^j,s^j)\right)}& \\
    \leq \limsup_{j\to \infty}{\left( \frac12 \|F(c^j,s^j)-y_{\eta_j}\|_{H_2}^2  + \alpha_j \tilde{\mathcal{R}}(c^j,s^j)\right)} &+ \limsup_{j\to \infty}{\left( ( \alpha_0 -\alpha_j  )\tilde{\mathcal{R}}(c^j,s^j)  \right) } \leq \alpha_0 \tilde{\mathcal{R}}(c^\ast,s^\ast) < \infty 
\end{align*}
Existence of a weakly convergent subsequence of $(c^j,s^j)_j$ and that the limit of each weakly convergent subsequence is an $\tilde{\mathcal{R}}$ minimizing solution can be derived following the remaining steps in the proof of \cite[Theorems 3.5]{Hofmann:2007us}.

Weak convergence of the subsequence in the $s$ component to $s^\ast$ follows immediately from the definition of the $\tilde{\mathcal{R}}$-minimizing solution with respect to $F$ which concludes the proof.
\end{proof}

\end{document}